\numberwithin{equation}{section}
\newcommand{\R}{\mathbb{R}}
\newcommand{\C}{\mathbb{C}}
\newcommand{\Q}{\mathbb{Q}}
\newcommand{\N}{\mathbb{N}}
\newcommand{\E}{\mathbb{E}}
\newcommand{\cB}{\mathcal{B}}
\newcommand{\cL}{\mathcal{L}}
\newcommand{\cZ}{\mathcal{Z}}
\newcommand{\cF}{\mathcal{F}}
\newcommand{\cE}{\mathcal{E}}
\newcommand{\cR}{\mathcal{R}}
\newcommand{\cW}{\mathcal{W}}
\newcommand{\cA}{\mathcal{A}}
\newcommand{\eps}{\varepsilon}
\newcommand{\dist}{\operatorname{dist}}
\newcommand{\Hd}{\dim_\mathrm{H}}
\newcommand{\Z}{\mathbb{Z}}
\renewcommand{\emptyset}{\varnothing}
\renewcommand{\epsilon}{\varepsilon}
\renewcommand{\rho}{\varrho}
\renewcommand{\phi}{\varphi}
\renewcommand{\mod}{\,\,\mathrm{mod\,}}
\renewcommand{\a}{\mathbf{a}}
\renewcommand{\b}{\mathbf{b}}
\newcommand{\A}{\mathbf{A}}
\renewcommand{\iint}{\int\hspace{-0.1in}\int}
\DeclareMathOperator{\spt}{spt}
\DeclareMathOperator{\supp}{supp}
\theoremstyle{plain}
\newtheorem{theorem}{Theorem}[section]
\newtheorem{problem}[theorem]{Problem}
\theoremstyle{definition}
\theoremstyle{remark}
\newtheorem{remark}[theorem]{Remark}
\begin{document}


\title{Fourier transforms and iterated function systems}

\author{Tuomas Sahlsten}
\address{Department of Mathematics and Statistics, University of Helsinki, Finland   \& \newline
Department of Mathematics, University of Manchester, Manchester, United Kingdom}
\email{tuomas.sahlsten@helsinki.fi}

\dedicatory{Dedicated to the memory of my father Heikki Sahlsten}


\begin{abstract}
We discuss the problem of bounding the Fourier transforms of stationary measures of iterated function systems (IFSs) and how the \textit{pseudo-randomness} of the IFS either due to arithmetic, algebraic or geometric reasons is reflected in the behaviour of the Fourier transform. We outline various methods that have been built to estimate the Fourier transform of stationary measures arising e.g. from thermodynamical formalism, additive combinatorics, random walks on groups and hyperbolic dynamics. Open problems, prospects and recent links to quantum chaos are also highlighted.
\end{abstract}

\maketitle


\section{Introduction}

An \textit{iterated function system} (IFS) in $\R^d$, $d \geq 1$, is defined typically (e.g. in Falconer's book \cite{Falconer}) as a collection $\Phi = \{f_j : j \in \cA\}$, where $f_j : \R^d \to \R^d$ are contractions and $\cA$ is either a finite or infinite countable set. They arise naturally in various settings, see e.g. Ghosh's survey \cite{GhoshSurvey}, such as from codings of Anosov flows and chaotic billiards using Markov partitions and Poincar\'e sections (see e.g. \cite{Bowen,BowenSeries, AGY}), where they describe the long-time statistics of the chaotic dynamics of the flow. Any IFS $\Phi$ is always associated an \textit{attractor} $K_\Phi$ satisfying $K_\Phi = \bigcup_{j \in \cA} f_j (K_\Phi)$. We can then study the statistics of typical orbits under the IFS $\Phi$ with the help of \textit{stationary (Gibbs) measures} $\mu_\phi$ on $K_\Phi$ associated to a family of \textit{potentials} $\phi = \{\phi_j : j \in \cA\}$, $\phi_j : \R^d \to \C$ (see Bowen \cite{BowenGibbs} in 1975 and Fan and Lau \cite{FanLau} from 1999 in this form). These are defined as probability measures satisfying the relation $\int u \, d\mu_\phi = \int \cL_\phi u \, d\mu_\phi$ for all continuous and compactly supported $u : \R^d \to \C$, where $\cL_\phi$ is the \textit{transfer operator} defined by 
$$\cL_\phi u(x) := \sum_{ j \in \cA } e^{\phi_j(f_j(x))} u(f_j(x)), \quad x \in \R^d.$$ 
Gibbs measures allow us to construct many natural measures on $K_\Phi$ such as conformal measures, measures of maximal entropy and \textit{Bernoulli measures}, that is, those with $\phi_j(f_j(x)) \equiv \log p_j$ for all $x$ for some $0 < p_j < 1$ with $\sum_{j \in \cA} p_j = 1$. For IFSs Bernoulli measures are particularly well-studied since they enjoy strong statistical limit laws due to close connections to i.i.d. random walks. For similitudes $f_j$ Bernoulli measures are commonly called \textit{self-similar}, for affine $f_j$ as \textit{self-affine} and for conformal $f_j$ as \textit{self-conformal} \cite{Falconer}. Much progress over the recent years has been put into understanding the \textit{dimension theory} of these measures, such as describing their Hausdorff dimension with thermodynamic formulae (e.g. Hutchinson's formula \cite{Hut} from 1981). This can be very difficult when the maps exhibit \textit{overlaps} in $\R$ or non-commutative behaviour of the linear parts in the higher dimensional cases (see e.g. breakthroughs by Hochman from 2014 \cite{Hochman}, Shmerkin from 2019 \cite{Shmerkin}, Varj\'u from 2019 \cite{VarjuAnnals} and the surveys by Varj\'u \cite{Varju,Varju2}).

In this manuscript we would like to discuss about the recent developments in the problem of relating the \textit{Fourier transform} $\widehat{\mu} : \R^d \to \C$,
$$\widehat{\mu}(\xi) := \int e^{-2\pi i \xi \cdot x} \, d\mu(x), \quad \xi \in \R^d,$$
 of the stationary measures $\mu$ to the \textit{ergodic theoretic properties} of the underlying IFS $\Phi$. This is a kind of question that also arises in the field of \textit{quantum chaos}, where one likes to understand how the classical chaotic dynamics influences the behaviour of the quantization of this classical dynamics (e.g. how mixing properties of the geodesic flows on surfaces influence the quantum dynamics defined using Fourier integral operators (see e.g. works of Shnirelman, Colin de Verdiere, and Zelditch \cite{Sni,CdV,Zelditch} from 1974, 1985 and 1987 and Rudnick-Sarnak \cite{RS94} from 1994). Understanding the Fourier decay properties of $\mu$ can reveal some of these mechanisms such as via the recently developed \textit{Fractal Uncertainty Principles} in quantum chaos after the works of Dyatlov and Zahl \cite{DyatlovZahl} in 2016 and Bourgain and Dyatlov \cite{BD1} in 2017, where Fourier transforms of $\mu$ play an important role, see the survey \cite{Dyatlov} for a wider overview of the links and related problems in quantum chaos. In the literature, those $\mu$ whose Fourier transform satisfies $\widehat{\mu}(\xi) \to 0$ as $|\xi| \to \infty$, is called \textit{Rajchman measure}, see the earlier survey by \cite{Lyons} from 1995.

There is also appeal in pure mathematics to study Fourier transforms of stationary measures $\mu$ arising from IFSs. Information about the asymptotic behaviour of $\widehat{\mu}$ provides a method to study the uniqueness of trigonometric series in the support of $\spt \mu$ as discussed in  \cite{Lyons,Salem2,K,SZ,Zygmund}, prevalence of normal numbers in $\spt \mu$ since the work of Davenport, Erd\"os and LeVeque \cite{DEL} from 1963 and numbers with Diophantine properties by the work of Pollington, Velani, Zafeiropoulos and Zorin \cite{PVZZ} in 2022, Marstrand type projection theorems (see the book by Mattila \cite{Mattila95} from 1995 for an overview), constructing patterns in $\spt \mu$ in the work of Laba and Pramanik \cite{LP} in 2009, how the intersections of the supports of $\spt \mu$ with other sets behave (see e.g. recent work of Avila, Lyubich and Zhang \cite{ALZ}), Fourier multipliers in the work of Sarnak \cite{Sarnak} from 1980 \cite{Sarnak} and Sidorov and Solomyak \cite{SidSol} from 2003, conditional decay of correlations in the work of Wormell \cite{Wormell} from 2023, exponential mixing in the work of Li and Pan \cite{LiPan} from 2023 (see also \cite{LPX}), restriction problems in the work of Mockenhaupt \cite{Mo} from 2000 or a method to help in studying the absolute continuity of Bernoulli convolutions in the work of Shmerkin \cite{ShmerkinConv} from 2014. We will discuss a bit more the emerged applications in this survey while we go around the topics more in detail, but I will point out to the articles \cite{EkstromSchmeling,JialunSahlsten1,JialunSahlsten2,AHW3} and the books \cite{Mattila2,Sahlsten} for broader discussion of these connections.

The decay (or non-decay) of the Fourier transform of $\mu$, and at what \textit{rate} (e.g. \textit{polylogarithmic} 
$$|\widehat{\mu}(\xi)| = O((\log |\xi|)^{-\beta})$$ 
for some $\beta > 0$ or \textit{polynomial} 
$$|\widehat{\mu}(\xi)| = O(|\xi|^{-\alpha})$$ 
for some $\alpha > 0$ as $|\xi| \to \infty$), can be considered a kind of measure of \textit{smoothness} of the measure $\mu$.  In general, if $\mu$ is any Radon measure on $\R^d$ that is absolutely continuous with respect to the $d$-dimensional Lebesgue measure, then by the Riemann-Lebesgue lemma, we always have $\widehat{\mu}(\xi) \to 0$ as $|\xi| \to \infty$. On the other hand, if $\mu$ gives positive mass to a proper non-empty hyperplane $V$ in $\R^d$, then it is an exercise to check that $\widehat{\mu}(\xi) \not\to 0$ as $|\xi| \to \infty$ along $\xi \in V^\perp$. In $\R$ this means that if $\mu$ contains atoms in its support, then the Fourier transform of $\mu$ cannot decay at all. On the other hand, if $\mu$ is \textit{curved} in some sense, e.g. in the case of the $d-1$ dimensional area measure $\sigma_{d-1}$ the sphere $\mathbb{S}^{d-1}$, Van der Corput lemma \cite{Mattila2} implies that $|\widehat{\sigma}_{d-1}(\xi)| = O(|\xi|^{-(d-1)/2})$. Similar results also work for cones by Fraser \textit{et al.} \cite{FHK}, more general submanifolds of non-vanishing Gaussian curvature (see e.g. \cite{Mattila2}), which also manifest in \textit{stationary phase approximations} in semiclassical analysis, see e.g. Zworski's book \cite{Zworski} on their use in semiclassical analysis. However, most attractors $K_\Phi$ to IFSs are instead broken and fractal-like sets, so it is unclear what is the analogue of curvature in these cases, which provides challenges in estimating $\widehat{\mu}$.

In fractal cases new issues can arise due to arithmetic reasons. For example, in $\R$, if $\mu$ is any self-similar measure for the IFS 
$\Phi_1 =  \Big\{x \mapsto \frac{x}{3},x \mapsto \frac{x}{3}+\frac{2}{3} \Big\}$
defining the middle $1/3$ Cantor set, then it is not difficult to see that $\widehat{\mu}(3^n) = \mu(1) \neq 0$ for all $n \in \N$, so in particular $\widehat{\mu}(\xi)$ does \textit{not decay} as $|\xi| \to \infty$. On the other hand, if $\mu$ is any non-atomic self-similar measure for the IFS 
$\Phi_2 =  \Big\{x \mapsto \frac{4}{5} x-1, x \mapsto \frac{4}{5} x +1 \Big\},$
then $\widehat{\mu}(\xi)\to 0$ with at least a \textit{polylogarithmic rate} as $|\xi| \to \infty$ by a work of Dai \cite{Dai} from 2012. Moreover, if $\mu$ is any non-atomic self-similar measure for the IFS 
$\Phi_3 =  \Big\{x \mapsto \frac{x}{2},x \mapsto \frac{x}{3}+\frac{2}{3} \Big\},$
then $\widehat{\mu}(\xi)\to 0$ with at least a polylogarithmic rate as $|\xi| \to \infty$ by a work of Li and the author \cite{JialunSahlsten1} from 2019. Furthermore, if $\mu$ is any non-atomic self-conformal measure 
$\Phi_4 =  \Big\{x \mapsto \frac{1}{x+1},x \mapsto \frac{1}{x+2} \Big\}$
defining the badly approximable numbers with continued fraction expansion bounded by $2$, then $\widehat{\mu}(\xi) \to 0$ with a \textit{polynomial rate} as $|\xi| \to \infty$ by a work of Jordan and the author \cite{JordanSahlsten} from 2016. Here the key differences between the IFSs is that the self-similar measures $\mu$ for $\Phi_1$ are invariant by multiplication by $3$ while for $\Phi_2$ there is no longer invariance under multiplication by $5/4$ and also the sequence $\dist((5/4)^n,\Z)$ is not square summable. In the case of $\Phi_3$ the inverse contractions $2$ and $3$ are different prime numbers, and $\Phi_4$ is non-linear. See also Figure \ref{fig:1} below for numerics on the Fourier transforms of stationary measures associated to couple of other IFSs.

\begin{figure}[ht!]
\hspace{-0.25in} \includegraphics[scale=0.315]{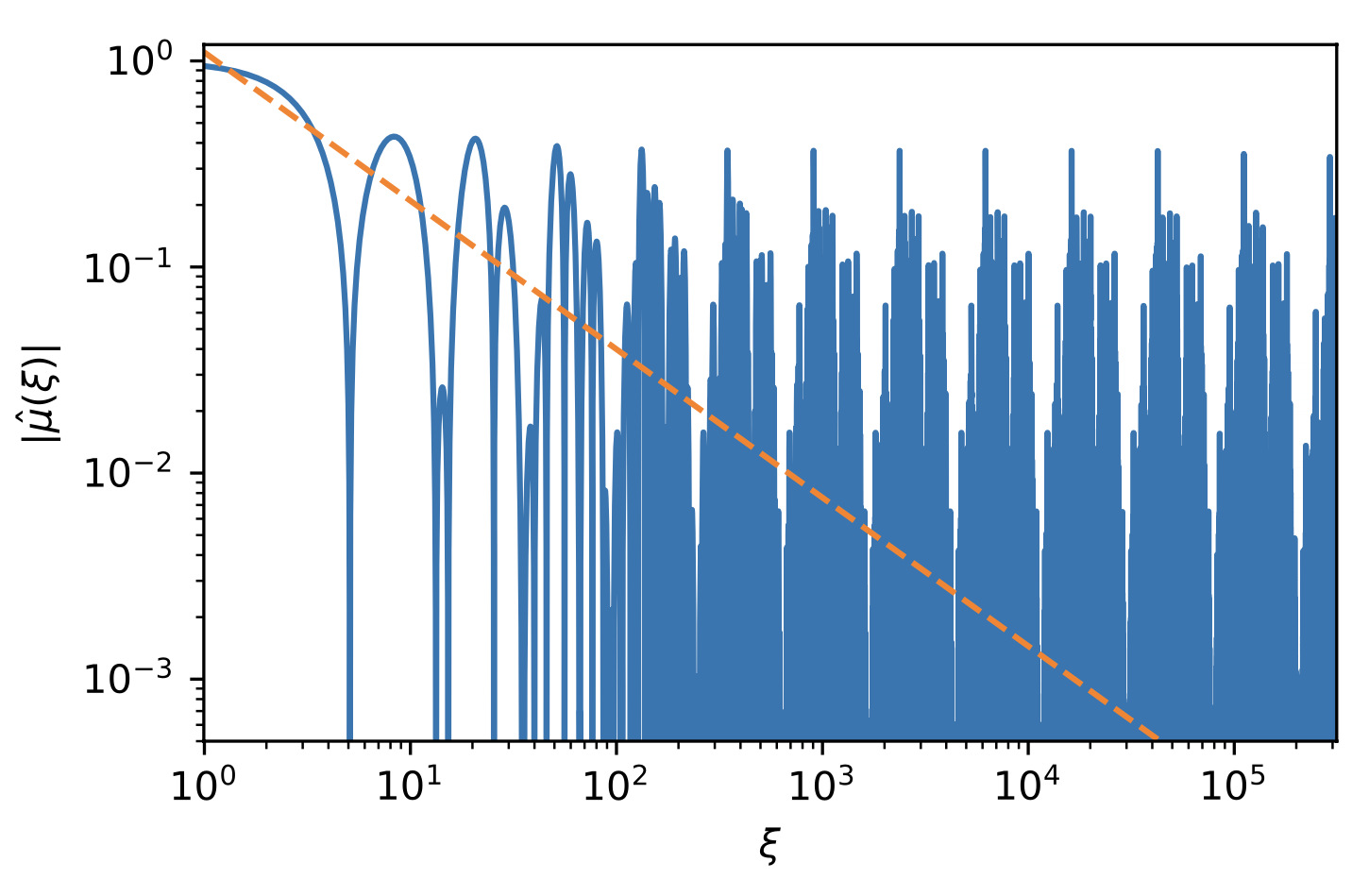}\qquad \includegraphics[scale=0.207]{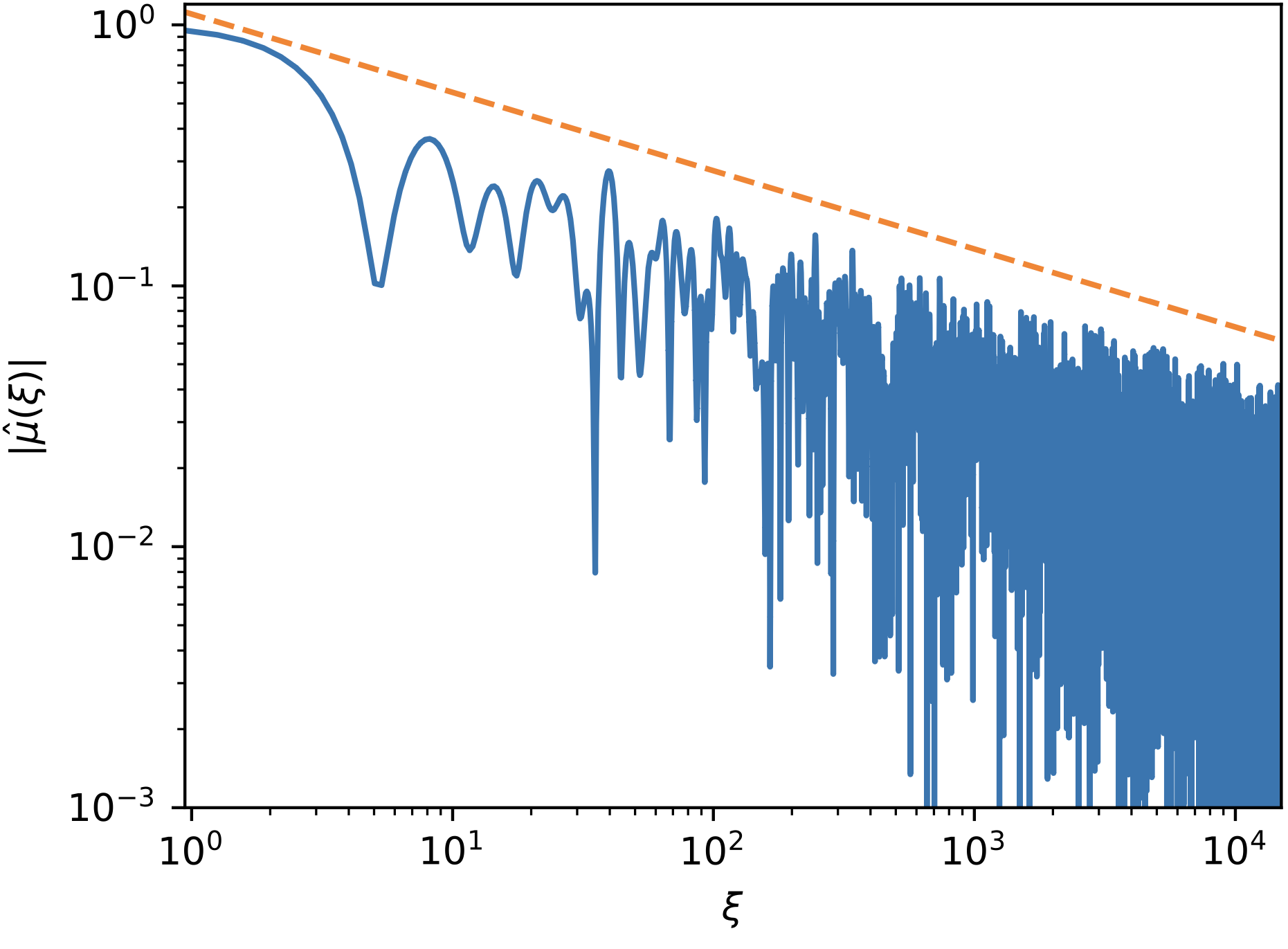}
\caption{\textit{Left}: Graph of $|\widehat{\mu}(\xi)|$ for a self-similar measure (Bernoulli convolution) associated to the IFS 
$\{x \mapsto \lambda x - 1,x \mapsto \lambda x + 1\},$
where $\lambda = (\frac{\sqrt{5}+1}{2})^{-2}$. Here $\lambda^{-2}$ is a Pisot number, so there is no Fourier decay for $\mu$, see Theorem \ref{thm:erdossalem} below. \textit{Right}: Graph of $|\widehat{\mu}(\xi)|$ for a self-conformal measure associated to the non-linear IFS 
$\{x \mapsto \tfrac{x}{3} + \tfrac{x^{2}}{6},x \mapsto \tfrac{x}{3} + \tfrac{2}{3}\}.$
The non-linearity of the IFS guarantees that there is polynomial Fourier decay for $\mu$, see Theorem \ref{thm:StevensSahlsten} below. The orange dotted line in both pictures is the polynomial function $|\xi|^{-\alpha/2}$ for reference, where $\alpha = 0.6 \approx \Hd \mu$, the Hausdorff dimension of $\mu$ \cite{Falconer}, in both cases. This suggests the second measure is potentially close to being a \textit{Salem measure}: $|\widehat{\mu}(\xi)| \lesssim |\xi|^{-\frac{\Hd \mu}{2}}$ as $|\xi| \to \infty$. Pictures by C. Wormell \cite{W1,W1a,W2}.}
\label{fig:1}
\end{figure}

These examples highlight the potential obstructions to the Fourier decay and its rate. In the self-similar case they are more of algebraic in nature, but in the self-conformal case the non-linearity in the IFS will be reflected in the Fourier transform. We remark that if instead of studying pseudorandom measures $\mu$ like the ones before, one \textit{randomises} measure $\mu$ some way, one can overcome some of these issues such as for random Cantor sets or for $\mu$ arising from the Brownian motion. The reason for these is that the added randomness will break some of the obstructions (like the symmetries arising from algebraic assumptions), see e.g. various works starting from Kahane articles \cite{KahaneLevel,KahaneImage} from 1980s and further works in the topic such as \cite{FOS,FS,ShmerkinSuomala,FalconerJin}.  In this manuscript we will only focus on \textit{deterministic} fractals, and see how the \textit{pseudo-randomness} due to algebraic or non-linearity reasons of the IFS $\Phi$ influences the Fourier transforms of stationary measures $\mu$ for IFSs. \newline

\textbf{Topics discussed in the paper.} First of all, in Section \ref{sec:selfsim}, we review results on Fourier transforms of self-similar measures in $\R$ such as Bernoulli convolutions and outline the recently developed methods used in the case where the IFS is not equicontractive. In Section \ref{sec:selfcon} we study the case of self-conformal measures in $\R$ associated to general $C^{1+\alpha}$ IFSs that are not self-similar and various methods based on thermodynamical formalism, additive combinatorics and renewal theory are introduced here. Next in Section \ref{sec:higherdim} we go to higher dimensional systems including self-similar measures that includes a proof of subpolynomial decay of self-similar measures using spectral information of transfer operators for random walks on rotation groups that has not yet previously been published (see Theorem \ref{thm:FourierSelfSimRd}) based on the work of Lindenstrauss and Varj\'u \cite{LV}. Here we will also outline the state-of-the-art and challenges self-affine and general $C^{1+\alpha}$ IFSs in connection to Fractal Uncertainty Principles in quantum chaos. Finally in Section \ref{sec:prospects} we discuss potential future directions and prospects to this topic.

\section{Self-similar iterated function systems in $\R$}\label{sec:selfsim}

In this part we will go through the simplest setting of \textit{self-similar} iterated function systems in $\R$, that is, systems where the contractions are of the form
$$f_j(x) = r_j x+ b_j, \quad x \in \R,$$
for some $0 < r_j < 1$ and $b_j \in \R$, $j \in \cA$. We will first consider the \textit{homogeneous case}, where $r_j \equiv \lambda$ for all $j \in \cA$ for some fixed $0 < \lambda < 1$, which has also been historically the most well established, and then move to the general inhomogeneous case.

\subsection{Equicontractive systems and Bernoulli convolutions}

Consider $0 < \lambda < 1$ and $\cA = \{1,2\}$ and the iterated function system $\Phi_\lambda := \{f_1,f_2\}$  determined by two maps $f_1(x) = \lambda x - 1$ and $f_2(x) = \lambda x + 1$. Then we can associate a non-empty compact $C_\lambda \subset \R$ satisfying $f_1(C_\lambda) \cup f_2(C_\lambda) = C_\lambda$ and a measure $\mu_\lambda$ on $\R$, called the \textit{Bernoulli convolution} satisfying $\frac{1}{2} f_1 \mu_\lambda + \frac{1}{2} f_2\mu_\lambda = \mu_\lambda$, where push-forward $f\mu$ is defined as $f\mu(A) := \mu(f^{-1}A)$ for any Borel $A \subset \R$. In other words, $\mu_\lambda$ is the distribution of the random variable $\sum \pm \lambda^{k}$ with $\pm$ chosen i.i.d. with equal probability. If $\lambda < 1/2$, then $C_\lambda$ is a self-similar Cantor set with the middle $1-2\lambda$ interval removed. When $\lambda \geq 1/2$, then $C_\lambda$ is an interval and the IFS has potentially complicated overlaps. 

Due to the homogeneity of the iterated function system $\Phi_\lambda = \{f_1,f_2\}$, that is, all the contraction ratios of $f_1$ and $f_2$ are equal to the same number $\lambda$ the measure $\mu_\lambda$ can be represented as an infinite convolution of Bernoulli measures $\frac{1}{2} \delta_{\lambda^{n}} + \frac{1}{2} \delta_{-\lambda^{n}}$ on $\R$ that allows us to represent the Fourier transform of $\mu_\lambda$ conveniently as
$$\widehat{\mu}_\lambda(\xi) = \prod_{n = 0}^\infty \cos(2\pi \lambda^n \xi), \quad \xi \in \R.$$
This expression reveals us how potentially the \textit{algebraic} properties of the number $\lambda$ to the asymptotic behaviour of $\widehat{\mu}_\lambda$. In particular, using this expression one can find out that the distances of $\lambda^{-n}$ to the integer lattice will play a key role. In particular, we say that $\lambda^{-1}$ is a \textit{Pisot number} if it is a real algebraic integer whose Galois conjugates are all strictly less than $1$ in absolute value, which is equivalent to the fact that the sequence $\dist(\lambda^{-n},\Z)$, $n \in \N$, is square summable. This allowed Erd\H{o}s and Salem in the late 1930s and early 1940s to prove the following characterisation:

\begin{theorem}\label{thm:erdossalem}
$\widehat{\mu}_\lambda(\xi) \to 0$ as $|\xi| \to \infty$ if and only if $\lambda^{-1}$ is not a Pisot number or $\lambda = 1/2$.
\end{theorem}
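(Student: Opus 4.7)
This is a two-sided statement and the two directions have quite different flavours. The ``if'' direction (Pisot $\Rightarrow$ non-decay) is due to Erd\H{o}s, while the ``only if'' direction (non-Pisot $\Rightarrow$ decay) is due to Salem and is the more delicate half.

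For Erd\H{o}s's direction, the plan is to exploit the product formula directly at the geometric test sequence $\xi_N = \lambda^{-N}$. Substituting $k = n - N$ in the infinite product and splitting gives the clean factorisation
\[
\widehat{\mu}_\lambda(\lambda^{-N}) = \Bigl(\prod_{m=1}^{N}\cos(2\pi \lambda^{-m})\Bigr)\,\widehat{\mu}_\lambda(1).
\]
The Pisot hypothesis is equivalent to $\sum_m \dist(\lambda^{-m},\Z)^2 < \infty$ with in fact geometric decay $\dist(\lambda^{-m},\Z) = O(\theta^m)$ for some $\theta \in (0,1)$, coming from the fact that the Galois conjugates of $\lambda^{-1}$ lie strictly inside the unit disk. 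Hence $|1 - \cos(2\pi\lambda^{-m})| = O(\theta^{2m})$, the finite product converges to a nonzero limit $P_\infty$, and since $\widehat{\mu}_\lambda(1) \neq 0$ (by a direct Fourier estimate, or by noting that generically no factor vanishes) we get $\widehat{\mu}_\lambda(\lambda^{-N}) \to P_\infty \widehat{\mu}_\lambda(1) \neq 0$, contradicting decay. The boundary case $\lambda = 1/2$ is separated out because $\mu_{1/2}$ is absolutely continuous with respect to Lebesgue measure on its support, so $\widehat{\mu}_{1/2}(\xi) \to 0$ follows from Riemann--Lebesgue.

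For Salem's direction, the plan is a contrapositive extraction argument. Suppose $|\widehat{\mu}_\lambda(\xi_k)| \geq \delta > 0$ along some $\xi_k \to \infty$. The elementary bound $|\cos(2\pi y)| \leq \exp(-c\,\dist(y,\tfrac{1}{2}\Z)^2)$ applied termwise in the product formula yields a uniform-in-$k$ estimate
\[
\sum_{n=0}^\infty \dist\bigl(\lambda^n \xi_k, \tfrac{1}{2}\Z\bigr)^2 \leq c^{-1}\log(1/\delta).
\]
Choosing $e_{n,k}\in\Z$ nearest $2\lambda^n \xi_k$ and passing to a diagonal subsequence produces limits $\eta_n := \lim_k (\lambda^n\xi_k - e_{n,k}/2)$ with $\sum_n \eta_n^2 < \infty$. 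The multiplicative structure $\lambda^{n+1}\xi_k = \lambda \cdot \lambda^n \xi_k$ then forces the integer differences $\lambda e_{n,k} - e_{n+1,k}$ to stabilise, giving an algebraic recurrence for the limit sequence. This plugs into the classical Pisot theorem: any $\theta > 1$ admitting an $\omega \neq 0$ with $\sum_n \dist(\omega\theta^n,\Z)^2 < \infty$ is a Pisot number (with $\omega \in \Q(\theta)$). Applied to $\theta = \lambda^{-1}$ this concludes that $\lambda^{-1}$ is Pisot, the alternative $\lambda = 1/2$ being absorbed when the extracted witness $\omega$ is trivial.

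The main obstacle is the Salem direction. Conceptually the diagonal extraction is standard, but the technical work lies in (i) controlling the integer parts $e_{n,k}$ so that the limits $\eta_n$ are taken in a meaningful topology, (ii) ensuring the limiting recurrence is nondegenerate, and (iii) invoking Pisot's structural theorem on algebraic integers, which is the genuinely arithmetic content of the argument: non-Pisot $\theta$ really do generate non-square-summable defect sequences $\dist(\omega\theta^n,\Z)$ for every $\omega \neq 0$. The Erd\H{o}s direction is by contrast essentially a short direct calculation.
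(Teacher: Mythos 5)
The paper does not actually prove Theorem \ref{thm:erdossalem}: it only attributes the forward implication to Erd\H{o}s \cite{erdos2} and the converse to Salem \cite{salem3}. So your proposal is being measured against the classical arguments, whose overall architecture you identify correctly. Your Erd\H{o}s half is essentially sound: the factorisation $\widehat{\mu}_\lambda(\lambda^{-N})=\bigl(\prod_{m=1}^{N}\cos(2\pi\lambda^{-m})\bigr)\widehat{\mu}_\lambda(1)$ is right, and for Pisot $\theta=\lambda^{-1}$ the trace relation $\theta^m+\sum_j\theta_j^m\in\Z$ gives $\dist(\theta^m,\Z)=O(\rho^m)$ with $\rho=\max_j|\theta_j|<1$, so the partial products converge to a nonzero limit (no factor vanishes, since $\theta^m$ is an algebraic integer and so cannot lie in $\tfrac14+\tfrac12\Z$). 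The one claim you dismiss too quickly is $\widehat{\mu}_\lambda(1)\neq0$: it genuinely fails for the Pisot case $\lambda=1/4$, where $\cos(2\pi\lambda)=0$, and there you must test instead along $c\,\lambda^{-N}$ for a suitable integer $c$ (e.g.\ $c=2$). This is fixable in one line but it is not merely a ``generic'' non-issue.

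The genuine gap is in your Salem direction, in the extraction step. For fixed $n$ you send $k\to\infty$, so $\lambda^n\xi_k\to\infty$ and $e_{n,k}\to\infty$: there is no limiting integer sequence, the quantities $\lambda e_{n,k}-e_{n+1,k}$ are not integers when $\lambda^{-1}\notin\Z$ (so nothing ``stabilises''), and, most importantly, the limits $\eta_n$ you produce are not of the form $\dist(\omega\theta^n,\tfrac12\Z)$ for any single fixed real $\omega$ --- and a fixed nonzero witness $\omega$ with $\sum_n\dist(\omega\theta^n,\Z)^2<\infty$ is exactly the input Pisot's theorem requires. Your sketch never constructs the $\omega$ it invokes at the end. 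The repair is Salem's normalisation at the small end of the orbit: write $\xi_k=\omega_k\theta^{N_k}$ with $\omega_k\in[1,\theta)$ and $N_k\to\infty$; the factors $\cos(2\pi\omega_k\theta^{j})$, $0\le j\le N_k$, all appear in the product for $\widehat{\mu}_\lambda(\xi_k)$, so your exponential bound gives $\sum_{j=0}^{N_k}\dist(\omega_k\theta^{j},\tfrac12\Z)^2\le c^{-1}\log(1/\delta)$ uniformly in $k$. Passing to a subsequence with $\omega_k\to\omega\in[1,\theta]$ and using continuity of $y\mapsto\dist(y,\tfrac12\Z)$ on each finite partial sum yields $\sum_{j\ge0}\dist(2\omega\theta^{j},\Z)^2<\infty$ with $2\omega\ge2\neq0$, and Pisot's theorem then applies verbatim to give that $\theta$ is Pisot. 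Note also that no ``absorption'' of the $\lambda=1/2$ alternative is needed in this direction: Salem's implication is only that non-decay forces $\theta$ Pisot, while $\lambda=1/2$ is excluded from the non-decay alternative simply because $\mu_{1/2}$ is Lebesgue measure on an interval.
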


The direction that the decay of $\widehat{\mu}_\lambda$ implies $\lambda^{-1}$ is not Pisot was proved by Erd\H{o}s in 1939 \cite{erdos2} and the opposite was done by Salem in 1944 \cite{salem3}. 

Now, what about the \textit{rate} of decay for $\widehat{\mu}_\lambda$? This is a particularly attractive question because of its relation to studying the \textit{absolute continuity} of the Bernoulli convolution. A conjecture here is that in fact $\lambda^{-1}$ when $\lambda > 1/2$ not being Pisot number should characterise the absolute continuity of $\mu_\lambda$, which has been stated e.g. in Varj\'u's survey \cite{Varju}. For example, in the argument of Shmerkin \cite{ShmerkinConv} from 2012, having polynomial decay rate for $\widehat{\mu}_{\lambda^k}(\xi)$ as $|\xi| \to \infty$ for some $k \in \N$ is crucial ingredient in the method of proving absolute continuity of $\mu_\lambda$. However, there are only very few examples of Bernoulli convolutions that have polynomial Fourier decay and in some cases it may not even be possible to have such rapid decay. For example, if $\lambda^{-1}$ is a \textit{Salem number}, i.e. a real algebraic integer whose Galois conjugates are all less than one and at least one has absolute value exactly $1$. Then, by a work of Peres, Schlag and Solomyak \cite[Lemma 5.2]{PSS} from 2000 we know that

\begin{theorem}\label{thm:salem}
If $\lambda^{-1}$ is a Salem number, then $|\widehat{\mu}_\lambda|$ cannot decay faster than subpolynomial rate.
\end{theorem}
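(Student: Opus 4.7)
The plan is to rule out every polynomial decay rate $|\widehat\mu_\lambda(\xi)|\lesssim|\xi|^{-s}$, $s>0$, by exhibiting along a subsequence $\xi_N\to\infty$ a lower bound of the form $|\widehat\mu_\lambda(\xi_N)|\gtrsim e^{-C_1 N\delta^2}$ in which the exponential rate can be made arbitrarily small compared with $s\log\theta$ by taking $\delta\to 0$, where $\theta:=\lambda^{-1}$. Starting from the infinite product
\[
\widehat\mu_\lambda(\xi)=\prod_{n=0}^\infty\cos(2\pi\lambda^n\xi),
\]
I take $\xi_N=\eta\theta^N$ for a carefully chosen nonzero $\eta$ in the ring of integers $O_K$ of $K=\Q(\theta)$, and reindex $m=N-n$ to get
\[
|\widehat\mu_\lambda(\eta\theta^N)|\;=\;A(\eta)\prod_{m=0}^{N}|\cos(2\pi\eta\theta^m)|,\qquad A(\eta):=\prod_{\ell\geq 1}|\cos(2\pi\eta\theta^{-\ell})|,
\]
where $A(\eta)$ is an $N$-independent constant that is positive provided $\eta\theta^{-\ell}\notin\tfrac{1}{4}+\tfrac{1}{2}\Z$ for every $\ell\geq 1$, the product converging because $\eta\theta^{-\ell}\to 0$ geometrically.

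The crux is a \emph{small-conjugates} lattice construction. Since $\theta$ is Salem of degree $d=2g+2$, the non-identity Galois embeddings of $K$ consist of one real embedding $\sigma_2$ with $\sigma_2(\theta)=\theta^{-1}$ and $g\geq 1$ pairs of complex embeddings $\sigma_{j,\pm}$ with $\sigma_{j,\pm}(\theta)=e^{\pm i\alpha_j}$. Applying Minkowski's theorem in the canonical embedding $O_K\hookrightarrow\R^2\times\C^g$, whose image is a lattice of covolume $\asymp\sqrt{|d_K|}$, to the symmetric convex region
\[
\{\,|\eta|\leq C_K\delta^{-(2g+1)},\;|\sigma_2(\eta)|\leq\delta,\;|\sigma_{j,+}(\eta)|\leq\delta\;(j=1,\dots,g)\,\}
\]
of volume $\gtrsim 1$, one obtains such a nonzero $\eta\in O_K$; a minor perturbation (passing to another lattice point in a slightly enlarged box) also makes $\eta$ avoid the countable bad set above, ensuring $A(\eta)>0$.

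The Galois trace $T_m:=\mathrm{Tr}_{K/\Q}(\eta\theta^m)\in\Z$ then gives the identity
\[
\eta\theta^m-T_m\;=\;-\sigma_2(\eta)\theta^{-m}-2\sum_{j=1}^{g}\mathrm{Re}\bigl(\sigma_{j,+}(\eta)e^{im\alpha_j}\bigr),
\]
whose right-hand side is bounded in absolute value by $(2g+1)\delta$ for every $m\geq 0$. Hence $|\cos(2\pi\eta\theta^m)|\geq\cos(2\pi(2g+1)\delta)\geq 1-C\delta^2$, and so $\prod_{m=0}^{N}|\cos(2\pi\eta\theta^m)|\geq e^{-C_1 N\delta^2}$ for some $C_1=C_1(g)$. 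Combining, $|\widehat\mu_\lambda(\xi_N)|\geq A(\eta)e^{-C_1N\delta^2}$ while $|\xi_N|\leq C_K\delta^{-(2g+1)}\theta^N$. A hypothetical polynomial bound $|\widehat\mu_\lambda(\xi)|\leq C'|\xi|^{-s}$ would give, after taking logarithms and letting $N\to\infty$, the inequality $s\log\theta\leq C_1\delta^2$; since $\delta>0$ is arbitrary, this forces $s=0$.

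The main obstacle is the Minkowski step, and more specifically the role of the unit-circle conjugates: it is precisely the $g\geq 1$ compact $\C$-factors in the canonical embedding (which would be hyperbolic and unbounded for non-Salem algebraic $\theta$) that allow $O_K$ to be squeezed into a box with $|\eta|$ only polynomially large in $1/\delta$ while all other conjugates stay of size $\delta$. This favourable trade-off between $\delta^2$ (quality of integer approximation along the orbit $(\eta\theta^m)_{m\geq 0}$) and $\delta^{-(2g+1)}$ (size of $|\eta|$) is the algebraic content of the Salem hypothesis that powers the argument.
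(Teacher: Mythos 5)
Your argument is correct and is essentially the standard proof of this statement (the one behind \cite[Lemma 5.2]{PSS}, which the paper cites rather than reproves): Salem's classical construction, via Minkowski's theorem in the canonical embedding, of a nonzero $\eta\in O_K$ with $\sup_{m\ge 0}\dist(\eta\theta^m,\Z)\le(2g+1)\delta$, fed into the infinite product formula along the frequencies $\xi_N=\eta\theta^N$. The only cosmetic point is that the ``minor perturbation'' you invoke to guarantee $A(\eta)>0$ is superfluous: since $\theta$ is a unit, $\eta\theta^{-\ell}$ is always an algebraic integer and hence can never lie in $\tfrac14+\tfrac12\Z$.
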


Theorem \ref{thm:salem} imposes some algebraic obstructions for the rate of Fourier decay, but actually finding explicit examples where the rate for $\widehat{\mu}_\lambda$ can be estimated even suboptimally is very challenging. The following is known in the rational case, see the works of Kershner \cite{Kershner} from 1936 and Dai \cite{Dai} from 2012:
\begin{theorem}
If $\lambda = \frac{p}{q} \in \Q$ with $q > p > 1$ and $p,q$ are relatively prime, then 
$$|\widehat{\mu}_\lambda(\xi)| = O((\log|\xi|)^{-\gamma}), \quad |\xi| \to \infty,$$ 
where $\gamma = -\log \cos (\frac{\pi}{2q})/\log (2\frac{\log q}{\log p}) > 0$.
\end{theorem}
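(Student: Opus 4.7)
The plan is to exploit the product formula
\[ \widehat{\mu}_\lambda(\xi) = \prod_{n=0}^\infty \cos(2\pi \lambda^n \xi) \]
and show that, for $|\xi|$ large, at least $\asymp \log\log|\xi|$ of the factors are bounded uniformly below $1$ in modulus. Call an index $n$ \emph{good} if $\dist(\lambda^n \xi, \tfrac{1}{2}\Z) \geq \tfrac{1}{4q}$ and \emph{bad} otherwise; each good index contributes a factor of at most $\cos(\pi/(2q))$, so if $M(\xi)$ denotes the number of good indices one immediately obtains
\[ |\widehat{\mu}_\lambda(\xi)| \leq \cos(\pi/(2q))^{M(\xi)}. \]

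The number-theoretic heart of the argument is a rigidity property of consecutive bad blocks. Suppose $n, n+1, \dots, n+m-1$ are all bad, and let $k_j \in \Z$ be the integer with $|\lambda^{n+j}\xi - k_j/2| < \tfrac{1}{4q}$. Using $\lambda^{n+j+1}\xi = (p/q)\lambda^{n+j}\xi$ and the triangle inequality gives
\[ |p k_j - q k_{j+1}| < \tfrac{p+q}{2q} < 1, \]
since $p<q$. Being an integer, the left-hand side vanishes, so $k_{j+1}=(p/q)k_j$; iterating and using $\gcd(p,q)=1$ forces $q^{m-1}\mid k_0$. Combined with $|k_0|\leq 2\lambda^n \xi + 1$, this yields the scale-dependent length bound
\[ m \leq 1 + \frac{\log(2\lambda^n \xi + 1)}{\log q}. \]

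The remaining step is a greedy multiscale iteration. Let $n_0$ be the first good index and inductively let $n_{k+1}$ be the first good index strictly after $n_k$. Writing $T_k := \lambda^{n_k}\xi$ and applying the above length bound to the bad block between $n_k$ and $n_{k+1}$ gives a recursion
\[ \log T_{k+1} \leq \frac{\log p}{\log q}\,\log T_k + C, \]
for some constant $C=C(p,q)>0$. Since $p<q$, the contraction ratio $\log p/\log q$ is strictly less than $1$, so iterating shows that $T_k$ remains $\gtrsim 1$ for at least
\[ M(\xi) \gtrsim \frac{\log\log|\xi|}{\log(2\log q/\log p)} \]
steps, the factor $2$ absorbing the additive error $C$. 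Feeding this count into the bound from the first step yields $|\widehat{\mu}_\lambda(\xi)| \leq (\log|\xi|)^{-\gamma}$ with the stated exponent.

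The main obstacle is the rigidity statement on bad blocks. It is the unique place where the arithmetic hypothesis $\gcd(p,q)=1$ enters, and it is precisely what prevents the argument from extending to irrational $\lambda$ without further input. The multiscale iteration itself is elementary, but it must be set up so that the $O(1)$ additive error in the recursion does not dominate the geometric contraction factor $\log p/\log q$; balancing these two competing effects is what produces the logarithmic denominator $\log(2\log q/\log p)$ in the final exponent, rather than the naive $\log(\log q/\log p)$.
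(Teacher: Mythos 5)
Your argument is correct and is essentially the classical Kershner--Dai proof that the paper cites for this statement: the good/bad dichotomy at threshold $\tfrac{1}{4q}$ (each good index contributing a factor $\cos(\pi/(2q))$), the divisibility rigidity $q^{m-1}\mid k_0$ along bad runs coming from $|pk_j-qk_{j+1}|<\tfrac{p+q}{2q}<1$, and the resulting count of $\gtrsim \log\log|\xi|$ good factors. The only slip is the direction of the recursion, which should be the lower bound $\log T_{k+1} \ge \tfrac{\log p}{\log q}\log T_k - C$ (this is what keeps $T_k\gtrsim 1$; iterating it in fact yields $M(\xi)\ge \tfrac{\log\log|\xi|}{\log(\log q/\log p)}-O(1)$, so the additive error $C$ costs only $O(1)$ good indices rather than a multiplicative factor, and the stated $\gamma$ with the extra $2$ in the denominator follows a fortiori).
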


Note here it is crucial that $p > 1$ since for $\lambda = \frac{1}{q}$ the measure $\mu_\lambda$ becomes $\times q$ invariant, so it cannot be Rajchman by the same argument as with $\times 3$ invariant measures. Moreover, having just the existence of one Galois conjugate outside the unit circle is enough for the polylogarithmic decay, as shown by Bufetov and Solomyak in \cite{BufSol} from 2014:

\begin{theorem}
If $\lambda^{-1}$ is a real algebraic number with at least one Galois conjugate of norm strictly bigger than $1$, then for some $\gamma > 0$ we have 
$$|\widehat{\mu}_\lambda(\xi)| = O((\log|\xi|)^{-\gamma}),$$ 
as $|\xi| \to \infty.$
\end{theorem}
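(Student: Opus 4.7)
The plan is to combine the infinite product formula for $\widehat{\mu}_\lambda$ with an Erd\H{o}s--Kahane style counting argument that exploits the algebraic rigidity forced by a Galois conjugate $\beta$ of $\alpha:=\lambda^{-1}$ with $|\beta|>1$. Using the elementary inequality $|\cos(2\pi t)|\le\exp(-c\|2t\|^2)$, where $\|\cdot\|$ is the distance to $\Z$ and $c>0$ is absolute, we obtain for every $N\ge 1$
$$|\widehat{\mu}_\lambda(\xi)| \le \exp\!\Big(-c\sum_{n=0}^{N-1}\|2\lambda^n\xi\|^2\Big).$$
With the choice $N=\lfloor C\log|\xi|\rfloor$, it then suffices to show the pointwise lower bound $\sum_{n=0}^{N-1}\|2\lambda^n\xi\|^2\ge c_1\log N$, which yields $|\widehat{\mu}_\lambda(\xi)|\le N^{-cc_1}\asymp(\log|\xi|)^{-\gamma}$ for some $\gamma=\gamma(\lambda)>0$.

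To produce that lower bound, I would recode $\lambda^n\eta$ (with $\eta:=2\xi$) through nearest integers: set $a_n:=\mathrm{nint}(\lambda^n\eta)\in\Z$ and $\epsilon_n:=\lambda^n\eta-a_n\in[-1/2,1/2]$. The identity $\lambda a_n=a_{n+1}+\epsilon_{n+1}-\lambda\epsilon_n$ shows that, on any run of indices with $|\epsilon_n|<\rho$ for a small fixed threshold $\rho$, the integer $a_{n+1}$ is forced by $a_n$. Applying the minimal polynomial $P(x)=\sum_{j=0}^d p_j x^j$ of $\alpha$ yields the integer combination $\sum_{j=0}^d p_j a_{n+j}=O(\max_j|\epsilon_{n+j}|)$; as soon as this error is strictly less than $1$, the combination must vanish identically, so the $a_n$'s satisfy an exact linear recurrence on the block.

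The algebraic hypothesis now enters decisively. The general solution of the recurrence decomposes as $a_n=\sum_i A_i\alpha_i^n$ along the Galois conjugates $\alpha_i$ of $\alpha$. Since $|a_n|=O(\alpha^n|\eta|)$, the coefficient $A_\beta$ of $\beta^n$ is forced to be tiny, $|A_\beta|\lesssim\alpha^{n_0}|\beta|^{-L}$, on any run of length $L$ starting at index $n_0$. Viewing $A_\beta$ as a fixed rational linear functional of the initial data $(a_{n_0},\dots,a_{n_0+d-1})$ and hence of $\eta$, this pins $\eta$ into a union of intervals of total length $\lesssim|\beta|^{-L}$ at scale $|\eta|$. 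Summing the geometric series over possible starting positions of maximal runs then gives the uniform Erd\H{o}s--Kahane bound
$$\#\{0\le n\le N:\|\lambda^n\eta\|\ge\rho\}\ \ge\ \frac{c_2\log N}{\log|\beta|},$$
which, inserted into the first display, completes the argument.

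The hard part will be making the third step quantitatively rigorous: promoting the qualitative observation ``$A_\beta\approx 0$'' into an honest covering estimate for the exceptional $\eta$, with constants uniform in the starting index $n_0$ and explicitly controlled by the height of the minimal polynomial of $\alpha$. This is precisely the refinement of Erd\H{o}s's 1939 non-decay result supplied by Bufetov and Solomyak; absent the hypothesis $|\beta|>1$, as in the Pisot case or the Salem case of Theorem~\ref{thm:salem}, the covering estimate fails exactly because no eigenvalue of the recurrence provides the needed exponential contraction of the exceptional set.
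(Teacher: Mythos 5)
The paper itself offers no proof of this theorem --- it is quoted with a citation to Bufetov--Solomyak \cite{BufSol} --- so your proposal must stand on its own. Its architecture (infinite product, $|\cos(2\pi t)|\le e^{-c\|2t\|^2}$, nearest-integer recoding, exact integer recurrence on blocks where all errors are small, Galois decomposition of the block solution) is indeed the Erd\H{o}s--Kahane scheme that \cite{BufSol} quantifies. But the decisive third step has two genuine gaps. First, the smallness of $A_\beta$ does not follow from the growth bound on the integers $a_n$. Normalizing $\xi=\alpha^{N}\theta$ with $|\theta|\in[1,\alpha)$ and working with $b_m=\mathrm{nint}(\alpha^m\theta)=\sum_i A_i\alpha_i^m$ (note that in your own decreasing-index convention $|a_n|=O(\alpha^{-n}|\eta|)$, not $O(\alpha^n|\eta|)$), extracting mode coefficients by Vandermonde from the bound $|b_m|\le\alpha^m|\theta|+\tfrac12$ yields $|A_\beta|\lesssim(\alpha/|\beta|)^{m_0+L}+|\beta|^{-(m_0+L)}$, which is small only when $|\beta|>\alpha$. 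The hypothesis allows conjugates with $1<|\beta|\le\alpha$, and for those your argument gives nothing. The correct source of the bound is the \emph{error} sequence $\delta_m=\alpha^m\theta-b_m$: on an exact block it satisfies the same recurrence, its $\beta$-mode coefficient equals $-A_\beta$ because the pure signal $\alpha^m\theta$ feeds only the $\alpha$-mode, and $|\delta_m|\le\tfrac12$ at the top of the block forces $|A_\beta|\lesssim_P|\beta|^{-(m_0+L)}$ for every conjugate with $|\beta|>1$.

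Second, and more seriously, your mechanism for converting ``$A_\beta$ tiny'' into the count $\#\{n\le N:\|\lambda^n\eta\|\ge\rho\}\gtrsim\log N$ is a covering estimate for the exceptional $\eta$, i.e.\ a bound on the \emph{measure} of the set of $\eta$ admitting a long block. That can only ever produce an almost-everywhere statement, whereas the theorem asserts decay for \emph{every} $\xi$. What is needed instead is a Diophantine lower bound: $A_\beta$ is a conjugate of an algebraic number of degree $\le d$ determined by the $d$ integers at the low end of the block, whose sizes are $O(\alpha^{m_0+d})$ when the block starts at $m_0$; hence either $A_\beta=0$ --- which by conjugation forces all $A_i=0$, hence $b_m\equiv0$, impossible once $\alpha^m|\theta|\ge1$ --- or the Liouville/norm inequality gives $|A_\beta|\ge C^{-m_0}$. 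Playing this against $|A_\beta|\lesssim|\beta|^{-(m_0+L)}$ caps the block length by $L\le C'(m_0+1)$, so the starting indices of successive blocks grow at most geometrically and at least $c\log N$ blocks, separated by indices with $\|\alpha^m\theta\|\ge\rho$, are needed to exhaust $[0,N]$. It is this multiplicative structure, not a measure estimate, that produces the logarithmic count and hence the exponent $\gamma$; your closing diagnosis, which asks for ``an honest covering estimate for the exceptional $\eta$,'' is therefore aimed at the wrong target.
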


To get any faster than polylogarithmic decay, one can push this idea by assuming \textit{all} the Galois conjugates are outside the unit circle and the product of the conjugates is
$\pm 2$. Such algebraic integers are called \textit{Garsia numbers}. For these cases, Dai, Feng and Wang \cite{DFW} in 2007 managed to establish \textit{polynomial} Fourier decay:

\begin{theorem}
If $\lambda^{-1}$ is a Garsia number, then $\widehat{\mu}_\lambda(\xi) \to 0$ at a polynomial rate as $|\xi| \to \infty$.
\end{theorem}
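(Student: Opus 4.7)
My plan is to extract polynomial Fourier decay from two ingredients: Garsia's separation lemma for the level-$N$ atoms of $\mu_\lambda$, and the multiplicative self-similarity $\widehat{\mu}_\lambda(\xi) = \widehat{\mu}_{\lambda,N}(\xi)\,\widehat{\mu}_\lambda(\lambda^N \xi)$, where $\mu_{\lambda,N} := 2^{-N}\sum_\epsilon \delta_{p_\epsilon}$ is the uniform measure on the points $p_\epsilon := \sum_{n=0}^{N-1} \epsilon_n \lambda^n$, $\epsilon \in \{-1,1\}^N$. Throughout, set $\beta = \lambda^{-1}$ with Galois conjugates $\beta_1=\beta,\beta_2,\ldots,\beta_d$.

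The first step is the Garsia separation lemma: $|p_\epsilon - p_{\epsilon'}| \geq c\, 2^{-N}$ for $\epsilon \neq \epsilon'$. I would start from the identity
\begin{equation*}
p_\epsilon - p_{\epsilon'} = 2\beta^{-(N-1)} R(\beta), \qquad R(x) := \tfrac12 \sum_{n=0}^{N-1} (\epsilon_{N-1-n} - \epsilon'_{N-1-n})\, x^n \in \Z[x] \setminus \{0\},
\end{equation*}
then verify that $R(\beta) \neq 0$ via Garsia's constant-term obstruction (a polynomial divisible by the minimal polynomial of $\beta$ would force its constant term to be even, but $R$ has coefficients in $\{-1,0,1\}$; an induction on divisibility by $x$ yields a contradiction). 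Hence $R(\beta) \in \Z[\beta]$ is a nonzero algebraic integer and $\prod_{i=1}^d |R(\beta_i)| \geq 1$; the hypothesis $|\beta_i|>1$ gives $|R(\beta_i)| \leq C|\beta_i|^N$ for $i \geq 2$, while $\prod_i |\beta_i| = 2$ (the defining Garsia normalization on the constant term of the minimal polynomial) yields $\prod_{i\geq 2}|\beta_i|^N = (2/\beta)^N$, so $|R(\beta)| \geq c(\beta/2)^N$ and hence $|p_\epsilon - p_{\epsilon'}| \geq c' 2^{-N}$. The $2^N$ atoms of $\mu_{\lambda,N}$ therefore occupy a bounded interval with essentially maximal spacing, and an orthogonality computation produces the mean-square bound
\begin{equation*}
\frac{1}{T}\int_0^T |\widehat{\mu}_{\lambda,N}(\xi)|^2\, d\xi \;\lesssim\; 2^{-N} + \frac{N}{T},
\end{equation*}
which is of order $2^{-N}$ once $T \gtrsim N 2^N$.

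The second step iterates the self-similarity: for any $K \geq 1$,
\begin{equation*}
|\widehat{\mu}_\lambda(\xi)| = |\widehat{\mu}_\lambda(\lambda^{KN}\xi)| \prod_{k=0}^{K-1} |\widehat{\mu}_{\lambda,N}(\lambda^{kN}\xi)|.
\end{equation*}
Taking $K \approx \log|\xi|/(N\log\beta)$ makes the tail factor $O(1)$, reducing the desired bound $|\widehat{\mu}_\lambda(\xi)| \lesssim |\xi|^{-\alpha}$ to showing that a positive proportion of the orbit factors $|\widehat{\mu}_{\lambda,N}(\lambda^{kN}\xi)|$ lie below some fixed $\rho < 1$.

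The main obstacle is this final transfer: from the $L^2$-mean smallness of $\widehat{\mu}_{\lambda,N}$ to pointwise smallness along the geometric orbit $\{\lambda^{kN}\xi\}_k$. Following Dai--Feng--Wang, one combines the $L^2$ estimate with Garsia's density theorem (a direct corollary of the separation via a bump-majorant argument) which gives $\mu_\lambda \in L^\infty$ and hence $\widehat{\mu}_\lambda \in L^2$; the factors along the orbit decouple approximately because multiplication by $\lambda^N$ is a strong dilation on precisely the scales where the $L^2$ bound is effective, so a Chebyshev/pigeonhole argument extracts $\gtrsim \eta K$ indices $k$ on which $|\widehat{\mu}_{\lambda,N}(\lambda^{kN}\xi)| \leq \rho$, and exponentiating gives $|\widehat{\mu}_\lambda(\xi)| \lesssim \rho^{\eta K} = |\xi|^{-\eta\log(1/\rho)/(N\log\beta)}$. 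The delicacy here---and the reason the Garsia hypothesis (rather than merely a Salem one) is needed---is that the normalization $\prod_i |\beta_i| = 2$ is exactly what aligns the atomic spacing $2^{-N}$ with the atom count $2^N$, defeating Pisot-type resonances among the rescaled frequencies that would otherwise obstruct the pointwise conversion.
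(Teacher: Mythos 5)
Your Steps 1--3 are fine: the Garsia separation lemma (via the constant-term obstruction and the product-of-conjugates normalisation $\prod_i|\beta_i|=2$), the resulting mean-square bound $\frac1T\int_0^T|\widehat{\mu}_{\lambda,N}|^2\,d\xi\lesssim 2^{-N}+N/T$, and the telescoped identity $|\widehat{\mu}_\lambda(\xi)|\le\prod_k|\widehat{\mu}_{\lambda,N}(\lambda^{kN}\xi)|$ are all correct and standard. The gap is exactly the step you flag as the ``main obstacle'', and your proposed fix does not close it. A Chebyshev bound derived from an $L^2$ average over $[0,T]$ controls only the Lebesgue measure of the exceptional set $\{\eta:|\widehat{\mu}_{\lambda,N}(\eta)|>\rho\}$; the orbit $\{\lambda^{kN}\xi\}_{k<K}$ is a finite, geometrically sparse set, and nothing you have established prevents it from lying entirely inside that exceptional set. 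There is a decisive structural objection: every ingredient you actually use --- separation of the level-$N$ atoms, the resulting mean-square bound at the relevant scales, and the convolution identity --- is equally available for a strongly separated Pisot system such as $\lambda=(1+\sqrt2)^{-1}$ (atoms $c\lambda^N$-separated, so the same computation gives $\frac1T\int_0^T|\widehat{\mu}_{\lambda,N}|^2\lesssim 2^{-N}$ once $T\gtrsim N\lambda^{-N}$), yet there $\widehat{\mu}_\lambda$ does not even tend to zero by Erd\H{o}s's theorem. Hence no argument built only from these ingredients can be completed; the Garsia hypothesis must enter the \emph{pointwise} analysis of the orbit, not merely the $L^2$ average.

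Moreover, the intermediate statement you aim for --- a positive proportion of indices $k$ with $|\widehat{\mu}_{\lambda,N}(\lambda^{kN}\xi)|\le\rho<1$ --- is false even for the simplest Garsia number $\beta=2$: taking $\xi=2^m+1$, all but $O_\rho(1)$ of the factors $|\cos(2\pi 2^{-n}\xi)|$, $n\le m$, are $1-o(1)$, and the decay comes from a single near-vanishing factor. The correct mechanism is a trade-off between the number of small factors and their sizes, and in the Dai--Feng--Wang argument this is extracted arithmetically: writing $2\beta^k s=n_k+\epsilon_k$ with $n_k\in\Z$, on any run of indices where all $|\epsilon_k|$ are small the minimal polynomial forces the exact integer recurrence $n_{k+d}=\sum_i a_i n_{k+i}$, and it is the combination of $|a_0|=2$ with all conjugates lying outside the unit disc that bounds the length and contribution of such runs. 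That Erd\H{o}s--Kahane-type bookkeeping, which is where the Garsia property is genuinely used, is absent from your sketch; the separation lemma alone (which you only feed into an $L^2$ estimate) cannot substitute for it.
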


In fact, \cite{DFW} generalises to equally weighted self-similar measures $\mu$ in the wider class of IFSs $\Phi = \{x \mapsto \lambda x + b_j : j \in \cA\}$. Recently in 2023 \cite{Streck}, Streck gave further examples of measures $\mu_\lambda$ with polynomial Fourier decay for arbitrary weighted $\mu$ for IFSs $\Phi = \{x \mapsto \lambda x + b_j : j \in \cA\}$ where all $b_j \in \Q$. The approach in \cite{Streck} is to study a self-affine measure on $\R^d$ projecting onto $\mu_\lambda$.

Given the positive results on the rate of decay of $\widehat{\mu}_\lambda$, what kind of algebraic features of $\lambda^{-1}$ \textit{characterise} the rate of Fourier decay? As we noted in Theorem \ref{thm:salem}, there are obstructions, but it is not clear what is the rate. P. Varj\'u (in an email communication with the author in 2023) suggested the following problem towards this:

\begin{problem}\label{prob:salemdecay}
If $\lambda^{-1}$ is Salem, does $|\widehat{\mu}_\lambda(\xi)| \to 0$ as $|\xi|\to \infty$ with polylogarithmic rate?
\end{problem}

For non-Salem $\lambda^{-1}$, it is not clear at all if $\mu_\lambda$ has very rapid Fourier decay or not. Thus the following problem that arises in this setting is likely very difficult:

\begin{problem}\label{prob:powerdecay}
Characterise non-Salem algebraic numbers $\lambda^{-1}$ such that $\widehat{\mu}_\lambda$ has faster than polylogarithmic Fourier decay.
\end{problem}

This would require one to use in a more fundamental way the \textit{translations} of the IFS in combination with the powers of $\lambda$ when computing the Fourier transform. In the related study of absolute continuity of $\mu_\lambda$, motivated by the advances of dimension theory \cite{Hochman,Shmerkin}, one could wonder if the main obstructions to rapid Fourier decay come from the existence of \textit{exact-} or \textit{superexponential overlaps} in the IFS $\Phi_\lambda$ produced in-part by the translations? However, we note here the work of Simon and V\'ag\'o \cite{SimonVago} from 2019, who provided new obstructions to absolute continuity that do not arise from exact overlaps in $\Phi_\lambda$, in particular, these conditions could also influence the possible polynomial rate of decay of Fourier transform of $\mu_\lambda$. For more details on this field, see the recent surveys by Varj\'u \cite{Varju,Varju2}. Thus, towards Problem \ref{prob:powerdecay}, it would be interesting to try to upgrade the results on the polylogarithmic decay by developing new methods that use more deeply the algebraic properties of the number $\lambda$. 

Also, going beyond the algebraic case, i.e. studying \textit{transcendental} $\lambda^{-1}$, not much is known. There has been much progress in the dimension theory of $\mu_\lambda$ recently for transcendental $\lambda^{-1}$ by Varj\'u \cite{VarjuT} in 2019, which motivated that something could be done for also the Fourier transform of $\mu_\lambda$. P. Varj\'u suggested that given that Pisot or Salem numbers provide obstruction to decay or rapid decay, perhaps if $\lambda^{-1}$ is quantitatively very far from them, one could have good decay. A concrete problem on this direction could be:

\begin{problem}\label{prob:transcdecay}
If $\lambda^{-1}$ is transcendental and not very well approximable by Salem numbers, then does $\widehat{\mu}_\lambda$ decay polynomially?
\end{problem}

Now, given that it has been hard to actually prove explicit examples of $\lambda$ such that $\mu_\lambda$ exhibits polynomial Fourier decay, it has been useful to \textit{randomise} $\lambda$ and try to prove that for ``most'' $\lambda$ there is polynomial Fourier decay. Erd\H{o}s proved in \cite{Erdos} from 1940 that for \textit{Lebesgue almost every} $\lambda \in (0,1)$ the Fourier transform of the Bernoulli convolution $\mu_\lambda$ decays polynomially. Furthermore, Kahane \cite{Kahane} in 1969 upgraded this to saying that the exceptional set of $\lambda \in (0,1)$ for which $\widehat{\mu}_\lambda$ does not have polynomial Fourier decay has Hausdorff dimension $0$. The method now is known as the popular and useful \textit{Erd\H{o}s-Kahane method}, which has many generalisations. In fact, this result works for any homogeneous IFS without common fixed points, the following is due to Shmerkin \cite{ShmerkinConv} from 2012:

\begin{theorem}\label{thm:shmerkin}
There exists $E \subset (0,1)$ with $\Hd E = 0$ such that if $\lambda \in (0,1) \setminus E$ and $b_j \in \R$, $j \in \cA$, for finite $\cA$ are distinct, then any non-atomic self-similar measure $\mu$ associated to the IFS $\{x \mapsto \lambda x + b_j : j \in \cA\}$ satisfies 
$$|\widehat{\mu}(\xi)| = O(|\xi|^{-\alpha})$$ 
as $|\xi| \to \infty$ for some $\alpha > 0$.
\end{theorem}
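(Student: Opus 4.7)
The plan is to exploit the homogeneity of the IFS, which allows the Fourier transform to be written as an infinite product, and then to apply the Erd\H{o}s--Kahane method in the parameter $\lambda$ to control that product for most $\lambda$.

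Iterating the self-similarity $\mu = \sum_{j} p_j f_j \mu$ and using $\lambda^n \xi \to 0$, one sees that
$$\widehat{\mu}(\xi) = \prod_{n=0}^\infty P(\lambda^n \xi), \qquad P(\xi) := \sum_{j \in \cA} p_j e^{-2\pi i b_j \xi},$$
with $|P| \leq 1$ and $P(0) = 1$. Since the $b_j$ are distinct and at least two weights are positive (by non-atomicity of $\mu$), $|P|$ is not identically $1$, and the saturation set $S := \{\xi : |P(\xi)| = 1\} = \{\xi : (b_j - b_k)\xi \in \Z \text{ for all } j, k\}$ is a proper closed additive subgroup of $\R$, hence either $\{0\}$ or $L\Z$ for some $L > 0$. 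A Taylor expansion of $|P|^2$ around $S$ gives $|P(\xi)| \leq 1 - c\,\dist(\xi, S)^2$ locally, and $|P| \leq 1 - c(\eta)$ uniformly off an $\eta$-neighbourhood of $S$.

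For $|\xi| \geq \lambda^{-1}$, set $N := \lfloor \log|\xi|/\log\lambda^{-1} \rfloor$ and
$$I_\eta(\xi, \lambda) := \#\bigl\{ 0 \leq n \leq N : \dist(\lambda^n \xi, S) \geq \eta \bigr\}.$$
If $I_\eta(\xi, \lambda) \geq \theta N$ for some fixed $\theta \in (0,1)$, the product bound yields $|\widehat{\mu}(\xi)| \leq (1 - c(\eta))^{\theta N} \lesssim |\xi|^{-\alpha}$ for some $\alpha = \alpha(\lambda, \eta, \theta) > 0$. It then suffices to exclude the $\lambda$ admitting arbitrarily large $|\xi|$ whose orbit $\{\lambda^n \xi\}_{n=0}^N$ clings to $S$ on more than a $(1-\theta)$-fraction of scales. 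The case $S = \{0\}$ is immediate, since $|\lambda^n \xi| \geq \eta$ for all $n \leq N - O(1)$; in the case $S = L\Z$, the rescaling $\xi' := \xi/L$ turns the clinging condition into $\dist(\lambda^n \xi', \Z) \leq \eta/L$, placing us in the classical Erd\H{o}s--Kahane setting for Bernoulli convolutions.

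In the Erd\H{o}s--Kahane step I would track the nearest integer $m_n$ to $\lambda^n \xi'$; each compatibility $|m_{n+1} - \lambda m_n| = O(\eta/L)$ pins $\lambda$ into an interval of length $O(\eta/(L|m_n|))$, and the number of admissible integer sequences $(m_n)$ of length $N$ with $m_n \approx \lambda^n m_0$ grows only polynomially in $|m_0|$. A direct covering argument then shows the bad-$\lambda$ set has Hausdorff dimension at most $1 - \kappa(\theta)$ for some $\kappa(\theta) > 0$; a countable intersection over $\theta_k \to 0$ yields a set $E \subset (0,1)$ with $\Hd E = 0$. The main obstacle is precisely the \emph{universality} of $E$ in $\{b_j\}$ and $(p_j)$: one must verify that the $\{b_j\}$-dependent data (the scale $L$, the quadratic constant in $|P|$, and the uniform gap $c(\eta)$) influence only the final exponent $\alpha$ and the threshold $\eta$ but never $E$ itself. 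The key point is that the Erd\H{o}s--Kahane counting depends only on the multiplicative dynamics of $\lambda$ and not on $L$, the latter being absorbed into the constants; the freedom to let $L$ be arbitrarily small is handled by allowing $\eta$ to depend on $\{b_j\}$ while keeping the dimension bounds uniform in $\eta$.
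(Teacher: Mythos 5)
Your overall strategy is the right one: the paper gives no proof of this statement (it is quoted from Shmerkin's paper \cite{ShmerkinConv}), and the cited argument is indeed the Erd\H{o}s--Kahane method applied to the infinite product $\widehat{\mu}(\xi)=\prod_{n\ge0}P(\lambda^n\xi)$ with $P(\xi)=\sum_j p_je^{-2\pi i b_j\xi}$, with the exceptional set $E$ depending only on $\lambda$. However, there is a genuine error in your treatment of $P$. The two bounds you extract from the saturation set $S=\{\xi:(b_j-b_k)\xi\in\Z\ \forall j,k\}$ --- namely $|P(\xi)|\le 1-c\,\dist(\xi,S)^2$ and, crucially, $|P(\xi)|\le 1-c(\eta)$ uniformly for $\dist(\xi,S)\ge\eta$ --- are false when $S=\{0\}$, i.e.\ when the differences $b_j-b_k$ are not all commensurable (which requires $\#\cA\ge3$, a case the theorem allows). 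For instance, with translations $0,1,\sqrt2$ and $\xi_m=q_m$ the denominators of the continued-fraction convergents of $\sqrt2$, one has $\dist(\sqrt2\,q_m,\Z)\le q_m^{-1}$, hence $|P(q_m)|\to1$ while $\dist(q_m,S)=q_m\to\infty$. The pointwise bound $|P(\xi)|<1$ for $\xi\notin S$ is true, but the uniformity over the unbounded set $\{|\xi|\ge\eta\}$ fails, and the orbit $\{\lambda^n\xi\}_{n\le N}$ sweeps through $[1,|\xi|]$, not a fixed compact set. Consequently your assertion that ``the case $S=\{0\}$ is immediate'' is wrong: that case is not immediate, and your argument as written proves nothing there.

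The standard repair (and what Shmerkin actually does) is not to use $S$ at all: fix a single pair $j_0\ne k_0$ with $p_{j_0}p_{k_0}>0$ and use only
$1-|P(\xi)|^2\ \ge\ p_{j_0}p_{k_0}\bigl(1-\cos(2\pi(b_{j_0}-b_{k_0})\xi)\bigr)\ \gtrsim\ \dist\bigl((b_{j_0}-b_{k_0})\xi,\Z\bigr)^2$.
After rescaling by $b_{j_0}-b_{k_0}$ this puts \emph{every} case (commensurable or not) into the classical Erd\H{o}s--Kahane setting for the lattice $\Z$, and the rest of your argument --- the counting of nearest-integer sequences $(m_n)$, the covering bound $\Hd\le 1-\kappa$, the intersection over a sequence of proportion parameters to force dimension zero, and the observation that the translations and weights only enter through the threshold $\eta$ and the final exponent $\alpha$, never through $E$ --- goes through as you describe. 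In the commensurable case $S=L\Z$ your bounds are actually correct (since the integers $L(b_j-b_k)$ have gcd $1$, closeness of all $(b_j-b_k)\xi$ to $\Z$ does force $\xi$ close to $L\Z$), so the gap is confined to, but fatal in, the incommensurable case.
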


Beyond Theorem \ref{thm:shmerkin} not much is known, and the general cases such as Problems \ref{prob:powerdecay} and \ref{prob:transcdecay} remain as important challenge. For general Bernoulli convolutions and self-similar measures, one can say something about the \textit{average} decay of Fourier transforms for $0 < \lambda < 1/2$ when $K_\Phi$ is a Cantor set. In this case Strichartz \cite{S1} in 1990 proved:

\begin{theorem}\label{thm:Stri}
If $0 < \lambda < 1/2$, then
$$\frac{1}{2R}\int_{-R}^R |\widehat{\mu}_\lambda(\xi)|^2 \, d\xi = O(R^{-\frac{\log 2}{\log(1/\lambda)}}), \quad R \to \infty.$$
\end{theorem}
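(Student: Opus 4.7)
The plan is to exploit the self-similarity of $\mu_\lambda$ to derive a renewal-type functional inequality for the average
\[ F(R) := \frac{1}{2R}\int_{-R}^R |\widehat{\mu}_\lambda(\xi)|^2\, d\xi. \]
Taking Fourier transforms of $\mu_\lambda = \tfrac12 (f_1)_*\mu_\lambda + \tfrac12 (f_2)_*\mu_\lambda$ yields the functional equation $\widehat{\mu}_\lambda(\xi) = \cos(2\pi\xi)\widehat{\mu}_\lambda(\lambda\xi)$. Squaring, using $\cos^2\theta = \tfrac12(1+\cos 2\theta)$, integrating over $[-R,R]$ and substituting $\eta = \lambda\xi$, I arrive at
\[ F(R) = \tfrac{1}{2} F(\lambda R) + E(R), \qquad E(R) := \frac{1}{4\lambda R}\int_{-\lambda R}^{\lambda R}\cos\!\left(\tfrac{4\pi\eta}{\lambda}\right)|\widehat{\mu}_\lambda(\eta)|^2\, d\eta. \]

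The heart of the proof is showing $|E(R)| = O(1/R)$. I use the identity $|\widehat{\mu}_\lambda|^2 = \widehat{\mu_\lambda * \tilde{\mu}_\lambda}$, where $\tilde{\mu}_\lambda(A) := \mu_\lambda(-A)$, and interchange the order of integration in $E(R)$ via Fubini. The inner integral $\int_{-\lambda R}^{\lambda R}\cos(4\pi\eta/\lambda) e^{-2\pi i\eta x}\, d\eta$ evaluates to a sum of two Dirichlet-type kernels whose denominators are $\pi(2/\lambda \pm x)$. The correlation measure $\mu_\lambda*\tilde{\mu}_\lambda$ is a probability measure supported in $[-2/(1-\lambda),\, 2/(1-\lambda)]$, and the hypothesis $\lambda < 1/2$ gives $2/\lambda - 2/(1-\lambda) = 2(1-2\lambda)/(\lambda(1-\lambda)) > 0$, so these denominators are bounded below uniformly in $x$ on the support. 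This yields an $R$-independent bound on the inner integral, hence $|E(R)| \leq C_\lambda/R$.

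Iterating $F(R) \leq \tfrac12 F(\lambda R) + C_\lambda/R$ for $n$ steps gives
\[ F(R) \leq 2^{-n} F(\lambda^n R) + \frac{C_\lambda}{R}\sum_{k=0}^{n-1}(2\lambda)^{-k}. \]
Since $2\lambda<1$, the geometric sum is of order $(2\lambda)^{-n}$. Choosing $n \asymp \log R/\log(1/\lambda)$ so that $\lambda^n R$ is bounded, the trivial bound $F(\lambda^n R) \leq 1$ makes the first term of order $2^{-n} \asymp R^{-\log 2/\log(1/\lambda)}$, while the second is of order $2^{-n}\lambda^{-n}/R \asymp 2^{-n}$, matching the claimed rate. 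The main obstacle is the oscillatory error estimate: the separation condition $\lambda < 1/2$, equivalent to $C_\lambda$ being a genuine self-similar Cantor set, is precisely what keeps the ``forbidden frequency'' $\pm 2/\lambda$ away from the support of $\mu_\lambda*\tilde\mu_\lambda$, so that $E(R)$ really decays. Without this separation the Dirichlet kernels acquire poles on the support and the argument breaks down, consistent with the theorem being stated only in the Cantor regime.
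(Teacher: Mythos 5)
Your argument is correct. The paper itself gives no proof of this statement (it is quoted from Strichartz \cite{S1}), so there is nothing internal to compare against; but your renewal-type derivation is sound at every step: the functional equation $\widehat{\mu}_\lambda(\xi)=\cos(2\pi\xi)\widehat{\mu}_\lambda(\lambda\xi)$ follows from $\mu_\lambda=\tfrac12 f_1\mu_\lambda+\tfrac12 f_2\mu_\lambda$, the identity $|\widehat{\mu}_\lambda|^2=\widehat{\mu_\lambda\ast\tilde\mu_\lambda}$ and Fubini are legitimate, the autocorrelation measure is indeed supported in $[-2/(1-\lambda),2/(1-\lambda)]$, and $\lambda<1/2$ is exactly what keeps the Dirichlet-kernel denominators $2/\lambda\mp x$ bounded below, giving $|E(R)|=O(1/R)$; the iteration with $n\asymp\log R/\log(1/\lambda)$ then produces the exponent $\log 2/\log(1/\lambda)$, which is the similarity dimension of $C_\lambda$. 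Compared with Strichartz's treatment, which establishes the analogous averaged decay for general self-similar measures under an open set condition via a more elaborate self-similarity bootstrap, your proof is a clean, fully self-contained specialisation to the homogeneous two-map case: the separation hypothesis enters only through the explicit gap between the forbidden frequency $\pm 2/\lambda$ and the support of $\mu_\lambda\ast\tilde\mu_\lambda$, which is a transparent way to see why the Cantor regime is needed.
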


We also highlight the work of Fan and Lau \cite{FanLau2}, where multiperiodic functions of the form $G(x) = \prod_{n=1}^{\infty} g(x/2^n)$ were considered which represent the Fourier transforms of self-similar functions or measures such as Bernoulli convolutions. Strichartz's work was then refined by Tsujii \cite{Tsujii} in 2015 who also included the overlapping case, where a kind of large deviation principle holds for the Fourier transform $\widehat{\mu}_\lambda(\xi)$:
\begin{theorem}\label{thm:Tsujii}
For any $0 < \lambda < 1$, we have
$$\lim_{\alpha \searrow 0} \limsup_{t \to \infty} \frac{1}{t} \log |\{\xi \in [-e^{t},e^t] : |\widehat{\mu}_\lambda(\xi)| \geq e^{-\alpha t}\}| = 0.$$
\end{theorem}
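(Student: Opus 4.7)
The strategy is to apply Chebyshev's inequality to a large moment of $|\widehat\mu_\lambda|$ and exploit the regularisation of iterated self-convolutions $\mu_\lambda^{*q}$. Writing $E_{t,\alpha} := \{\xi \in [-e^t,e^t] : |\widehat\mu_\lambda(\xi)| \geq e^{-\alpha t}\}$, Chebyshev gives, for each integer $q \geq 1$,
\[
|E_{t,\alpha}| \;\leq\; e^{2q\alpha t}\int_{-e^t}^{e^t}|\widehat\mu_\lambda(\xi)|^{2q}\,d\xi \;=\; e^{2q\alpha t}\int_{-e^t}^{e^t}|\widehat{\mu_\lambda^{*q}}(\xi)|^{2}\,d\xi.
\]
For the right-hand integral I would use the Plancherel-type identity
\[
\int_{-R}^R |\widehat\nu(\xi)|^2\,d\xi \;=\; \iint \frac{\sin 2\pi R(x-y)}{\pi(x-y)}\,d\nu(x)\,d\nu(y)
\]
with $\nu = \mu_\lambda^{*q}$, splitting the double integral according to $|x-y|\leq 1/R$ and $|x-y|>1/R$. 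The first piece is bounded by $2R\cdot(\nu\otimes\nu)(\{|x-y|\leq 1/R\})$ and the second by the truncated $1$-energy of $\nu$; both are controlled by the correlation dimension $d_q := d_2(\mu_\lambda^{*q})$ and give, for every $\eps>0$,
\[
\int_{-R}^R |\widehat\mu_\lambda(\xi)|^{2q}\,d\xi \;\lesssim_{q,\eps}\; R^{\,1-d_q+\eps}.
\]

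Combining these inputs, $\tfrac{1}{t}\log|E_{t,\alpha}| \leq 2q\alpha + (1-d_q) + \eps + o(1)$. Fixing any $\eta>0$, I would first pick $q$ so large that $1 - d_q < \eta/3$, then $\eps = \eta/3$, and finally $\alpha < \eta/(6q)$: this yields $\limsup_t \tfrac{1}{t}\log|E_{t,\alpha}| \leq \eta$, and sending $\eta \to 0$ gives $\lim_{\alpha \to 0}\limsup_t \tfrac{1}{t}\log|E_{t,\alpha}| \leq 0$. The matching lower bound $\geq 0$ is immediate: since $\widehat\mu_\lambda$ is continuous with $\widehat\mu_\lambda(0)=1$, the inequality $|\widehat\mu_\lambda(\xi)|\geq 1 - C\xi^2$ near $0$ forces $|E_{t,\alpha}|\gtrsim \sqrt{\alpha t}$, so $\tfrac{1}{t}\log|E_{t,\alpha}| \geq -o(1)$.

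\textbf{The hard step} is showing that $d_q \to 1$ as $q\to\infty$ for every $\lambda\in(0,1)$ with $\mu_\lambda$ non-atomic. For $\lambda<1/2$ the Cantor geometry and Strichartz's Theorem~\ref{thm:Stri} directly furnish quantitative bounds on $\int|\widehat\mu_\lambda|^{2q}$ already at $q=1$, from which dimension boosting under convolution is essentially automatic. The genuine difficulty is the overlapping regime $\lambda\geq 1/2$, and especially the Pisot case where $\widehat\mu_\lambda$ does not even tend to zero: here $\mu_\lambda$ is singular of Hausdorff dimension strictly less than $1$, yet one must argue that the $q$-fold convolution $\mu_\lambda^{*q}$, which is the law of a $\mu_\lambda$-random walk with Binomial-type digit distribution, smoothens enough that $d_2(\mu_\lambda^{*q})$ approaches the ambient dimension as $q\to\infty$. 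Making this quantitative uniformly in $\lambda$ is the key technical hurdle, and is closely tied to recent dimension-of-convolution results for self-similar measures in the works of Hochman, Shmerkin and Varj\'u.
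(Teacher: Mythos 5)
Your opening reduction is fine as far as it goes: Chebyshev plus the identity $|\widehat{\mu}_\lambda(\xi)|^{2q}=|\widehat{\mu_\lambda^{*q}}(\xi)|^2$ and the standard energy bound $\int_{-R}^R|\widehat{\nu}|^2\,d\xi\lesssim_{s}R^{1-s}$ for $s<d_2(\nu)$ are all correct, and the trivial lower bound is handled correctly. But the proof has a genuine gap exactly where you flag "the hard step": the claim that $d_2(\mu_\lambda^{*q})\to 1$ as $q\to\infty$ for every $\lambda\in(0,1)$ is not proved, and it is not an off-the-shelf consequence of the Hochman--Shmerkin--Varj\'u circle of results. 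Shmerkin's $L^q$-spectrum formula for self-similar measures requires exponential separation of the (deduplicated) iterated IFS, which is unknown for arbitrary transcendental $\lambda$; and even in the algebraic/Pisot case, where exponential separation does hold, one is left with the collision-probability estimate $\mathbb{P}\bigl(\sum_{k<n}\delta_k\lambda^k=0\bigr)\leq e^{-(1-o_q(1))n\log(1/\lambda)}$ for $\delta_k$ distributed as differences of $q$-fold Rademacher sums --- an arithmetic counting problem that is the real content of the theorem and is nowhere addressed. The claim that for $\lambda<1/2$ "dimension boosting under convolution is essentially automatic" is also unjustified: $\mu_\lambda^{*q}$ acquires overlaps (carries) even when $\Phi_\lambda$ is strongly separated, so Strichartz's bound at $q=1$ does not propagate for free. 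Worse, the missing step is essentially \emph{equivalent} to the theorem: Tsujii's estimate conversely yields $\int_{-R}^R|\widehat{\mu_\lambda^{*q}}|^2\,d\xi\lesssim R^{\eta}$ for $q$ large, hence $d_2(\mu_\lambda^{*q})\geq 1-\eta$. So the proposal, as written, restates the theorem in energy language rather than proving it.

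For comparison, the proof in the cited work of Tsujii avoids the dimension theory of convolutions altogether. One iterates the self-similarity identity to get $|\widehat{\mu}_\lambda(\xi)|^{2}\leq\sum_{|w|=n}p_w\,|\widehat{\mu}_\lambda(r_w\xi)|^{2}$, which transports information about super-level sets of $|\widehat{\mu}_\lambda|$ between dyadic frequency blocks; this yields an approximate sub-multiplicativity for the quantities $b(\alpha):=\limsup_t\frac{1}{t}\log|E_{t,\alpha}|$, and the only analytic input needed to start the induction is Wiener's lemma (non-atomicity of $\mu_\lambda$ forces $\frac{1}{2T}\int_{-T}^{T}|\widehat{\mu}_\lambda|^2\to 0$), valid for every $\lambda\in(0,1)$ with no arithmetic hypotheses. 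If you want to salvage your approach, you would need to supply an independent, uniform-in-$\lambda$ proof that the truncated $1$-energies of $\mu_\lambda^{*q}$ grow subpolynomially for large $q$; at present that is precisely the statement being assumed.
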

This means that $|\widehat{\mu}_\lambda(\xi)|$ decays polynomially except along a very sparse set of frequencies. This was also recently generalised by Khalil \cite{Khalil} in 2023, which we discuss more in the Section \ref{sec:higherdim} on higher dimensions. The works of Strichartz, Kaufman and Tsujii also work for more general IFSs \cite{S2,S3}, which are not necessarily equicontractive (in particular Tsujii's work holds for \textit{any} non-trivial self-similar measure) that is, when the contractions $r_i \neq r_j$ for some pair $i \neq j$. This has only more recently become quite active, which we will now discuss.

\subsection{General self-similar systems}

In the case of a general self-similar IFSs $\Phi = \{x \mapsto r_j x + b_j : j \in \cA\}$, where some $r_i \neq r_j$, the lack of convolution structure makes it more difficult to get similar algebraic results. The main results for general IFS involves the generalisation of Theorem \ref{thm:Stri} to IFSs such that $\log r_i / \log r_j$ are all rational numbers, and Tsujii's result (Theorem \ref{thm:Tsujii}) to any general self-similar IFSs. However, the works of Strichartz and Tsujii give no examples of non-equicontractive self-similar measures that are Rajchman.

A quite different case compared to Bernoulli convolutions occurs when two contractions $r_j$ in the IFS are \textit{not} powers of each other, that is, $\frac{\log r_i}{\log r_j} \in \R \setminus \Q$. In this case, the additive subgroup generated by the log-contractions $-\log r_j$ is dense in $\R_+$, where as for Bernoulli convolutions log-contractions just lie in the lattice $(-\log \lambda)\Z$. A toy model to consider here would be the IFS 
$$\Phi = \{x \mapsto f_1(x) = x/2,x \mapsto f_2(x) = x/3 + 2/3\}$$ 
where the lengths of the intervals involve powers of $2$ and $3$, which are prime and so $\frac{\log 2}{\log 3} \in \R \setminus \Q$. Consider a self-similar measure $\mu$ associated to $\Phi$ with equal weights, that is, we have 
$$\mu = \frac{1}{2}f_1(\mu) + \frac{1}{2}f_2(\mu).$$ 
By iterating this, Cauchy-Schwarz allows us to reduce the problem of bounding $|\widehat{\mu}(\xi)|^2 = \widehat{\mu}(\xi)\overline{\widehat{\mu}(\xi)}$ to exponential sums involving products involving powers of $2$ and $3$:
\begin{align} \label{eq:FourierBound} |\widehat{\mu}(\xi)|^2 \leq \iint \frac{1}{2^n} \sum_{ \a \in \{1,2\}^n } e^{-2\pi i \xi 2^{-k(\a)} 3^{-n+k(\a)}  (x-y)} \, d\mu(x) \, d\mu(y),
\end{align}
where $k(\a) := \sharp \{j = 1,\dots, n : a_j = 1\}$. Since $\frac{\log 2}{ \log 3} \in \R \setminus \Q$, one would then expect that when we pick the word $\a \in \cA^n$ randomly with weights $2^{-n}$ and fix $x-y$, the numbers 
$$2^{-k(\a)} 3^{-n+k(\a)} (x-y)$$ 
should exhibit some form of ``non-concentration'' (i.e. not concentrate near a point), which should translate to cancellations in the exponential sums in \eqref{eq:FourierBound} as long as $x-y$ is not too small. However, as we can represent 
$$2^{-k(\a)} 3^{-n+k(\a)} (x-y) = (3/2)^{k(\a)} 3^{-n}(x-y),$$ 
it is not clear how the single parameter orbits $(3/2)^{k} z_n$ distributes mod $1$ under multiplication by $3/2$ when $k = k(\a)$ grows in $n$ and $z_n = 3^{-n}(x-y)$.

A way around this problem is to use a \textit{stopping time argument}, where we try to exploit the \textit{distribution} of the fluctuations of $2^{-k(\a)} 3^{-n+k(\a)}$ nearby a given value $e^{-t}$, and use irrationality of $\frac{\log 2}{\log 3}$ so that this distribution becomes absolutely continuous as $t \to \infty$. This is roughly speaking the renewal theoretic approach to studying Fourier transforms of self-similar measures, which is done in more generality by Li and the author in \cite{JialunSahlsten1} in 2019. The idea was inspired by the renewal theoretic approach to study the Fourier transforms of stationary measures for random walks on groups by Li \cite{Li} in 2018, which we will discuss more in detail in the self-conformal section as the measures it applied to come from non-linear IFSs. See also the work of Lalley \cite{Lalley} from 1989. The renewal theoretic idea that allows us to study Fourier transforms of self-similar measures for IFSs $\Phi = \{x \mapsto r_j x + b_j : j \in \cA\}$ when $\log r_i / \log r_j$ is irrational for some $i \neq j$:

\begin{theorem}\label{thm:lisahlsten}
Let $\Phi = \{x \mapsto r_j x + b_j : j \in \cA\}$ such that no two pair of the maps $x \mapsto r_j x + b_j$ has a common fixed point. Then every self-similar measure associated to $\Phi$ and a probability vector $p_j \in (0,1)$, $j \in \cA$, is Rajchman if there exists $i \neq j$ such that $\frac{\log r_j}{\log r_j} \in \R \setminus \Q$.
\end{theorem}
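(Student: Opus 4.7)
The plan is to exploit the irrationality $\log r_i / \log r_j \notin \Q$ through a stopping-time renewal argument that turns the decay of the scale $r_{\omega|_\tau} \xi$ into equidistribution of the Fourier phase modulo $1$. Encode $\mu = \pi_\ast P$, where $P$ is the Bernoulli measure on $\cA^\N$ with weights $(p_j)$ and $\pi(\omega) = \sum_{k \geq 1} r_{\omega_1}\cdots r_{\omega_{k-1}} b_{\omega_k}$, writing $r_\a$ and $b_\a$ for the cumulative contraction and translation along a finite word $\a$. For $\xi \neq 0$ introduce the renewal stopping time $\tau(\omega) = \min\{ n \geq 1 : r_{\omega|_n} |\xi| \leq 1\}$. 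Because $\pi(\omega) = b_{\omega|_\tau} + r_{\omega|_\tau} \pi(\sigma^\tau \omega)$ and $\sigma^\tau \omega$ is $P$-independent of $(\omega|_\tau, \tau)$ by the strong Markov property of the Bernoulli shift, one obtains the stopped identity
$$\widehat{\mu}(\xi) = \int e^{-2\pi i \xi b_{\omega|_\tau}} \widehat{\mu}(r_{\omega|_\tau} \xi) \, dP(\omega).$$

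Next, set $t = \log|\xi|$ and $S_n(\omega) = -\sum_{k=1}^n \log r_{\omega_k}$, so that $\tau$ is the first-passage time of the random walk $S_n$ above $t$. The step distribution has atoms $p_j$ at $-\log r_j$; it is non-arithmetic precisely under the paper's hypothesis. Blackwell's non-lattice renewal theorem then produces an absolutely continuous weak limit $\nu$ on $[0, -\log r_{\min}]$ as $t \to \infty$ for the overshoot $V_t := S_\tau - t$, and therefore for the scale $r_{\omega|_\tau} \xi = e^{-V_t} \in [r_{\min}, 1]$.

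The heart of the argument is upgrading this scalar statement to a joint equidistribution: as $|\xi| \to \infty$, the $P$-law of the pair $\bigl(\xi b_{\omega|_\tau} \bmod 1,\, r_{\omega|_\tau} \xi\bigr)$ converges weakly to Lebesgue measure on $\R/\Z$ tensored with the pushforward of $\nu$ under $v \mapsto e^{-v}$. Granted this, the stopped identity passes to the limit and gives
$$\lim_{|\xi| \to \infty} \widehat{\mu}(\xi) = \Bigl(\int_0^1 e^{-2\pi i u} \, du\Bigr) \int_{[r_{\min}, 1]} \widehat{\mu}(y) \, d(\exp_\ast \nu)(y) = 0,$$
so $\mu$ is Rajchman. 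Here the fact that $\mu$ has no atoms at $0$ (which follows from having no common fixed point together with $p_j \in (0,1)$) is used implicitly so that the right-hand integral is well-defined and finite.

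The main obstacle is the joint equidistribution, because $\xi b_{\omega|_\tau}$ is an additive cocycle over the same random walk that drives the scale, hence tightly correlated with $V_t$. The natural route is a complex, twisted transfer-operator refinement of Blackwell along the lines of Lalley \cite{Lalley} and Li \cite{Li}: introduce the family
$$\cL_{s, \eta} f(x) := \sum_{j \in \cA} p_j \, r_j^{\,s} \, e^{-2\pi i \eta b_j} f(f_j(x)), \qquad (s, \eta) \in \C \times \R,$$
whose iterates generate the joint Laplace-Fourier transform of $(S_n, b_{\omega|_n})$, and run a Wiener-Ikehara/Tauberian argument on $(I - \cL_{s, \eta})^{-1}$. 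The irrationality of $\log r_i / \log r_j$ is what makes $\cL_{s, 0}$ have a simple dominant eigenvalue on $\Re s = 0$ only at $s = 0$; the no-common-fixed-point hypothesis ensures that the $b_j$'s cannot be written as $(1 - r_j) x_0$, so they form a genuine non-coboundary cocycle and force the spectral radius of $\cL_{0, \eta}$ strictly below $1$ for $\eta \neq 0$. Making this spectral picture quantitative and uniform in $\eta$, so that the Tauberian step delivers the required joint weak convergence rather than just the marginal one from classical Blackwell, is the central technical difficulty of the proof.
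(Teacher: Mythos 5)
Your setup---the stopping time $\tau$, the non-arithmeticity of the step distribution of $-\log r_{\omega_k}$ under the hypothesis $\log r_i/\log r_j\notin\Q$, and Kesten-type renewal theory for the overshoot---is exactly the skeleton of the actual argument, and your stopped identity $\widehat{\mu}(\xi)=\int e^{-2\pi i\xi b_{\omega|_\tau}}\widehat{\mu}(r_{\omega|_\tau}\xi)\,dP(\omega)$ is correct. The gap is the step you yourself flag as the central difficulty: the claimed equidistribution of $\xi b_{\omega|_\tau}\bmod 1$ towards Lebesgue measure. First, that claim is essentially circular. Since $b_{\omega|_\tau}=\pi(\omega)-r_{\omega|_\tau}\pi(\sigma^\tau\omega)$ and $r_{\omega|_\tau}|\xi|\in[r_{\min},1]$, the phase $\xi b_{\omega|_\tau}$ differs from $\xi\pi(\omega)$ by a bounded (and correlated) correction, and equidistribution of $\xi\pi(\omega)\bmod 1$ under $P$ is, by Weyl's criterion, equivalent to $\widehat{\mu}(k\xi)\to 0$ for all nonzero integers $k$---that is, to the very Rajchman property you are trying to prove. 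Second, the spectral mechanism you propose cannot deliver it. The operator $\cL_{0,\eta}$ you write down does not generate the phase $\xi b_{\omega|_\tau}$ (which is the weighted sum $\sum_k r_{\omega_1}\cdots r_{\omega_{k-1}}b_{\omega_k}$, not the additive cocycle $\sum_k b_{\omega_k}$), and in any case its spectral radius is not uniformly below $1$ for $\eta\neq 0$: in the paper's own toy example $\{x/2,\,x/3+2/3\}$ one has $b_1=0$, $b_2=2/3$, so $\cL_{0,\eta}=\cL_{0,0}$ for every $\eta\in 3\Z$ and the spectral radius equals $1$ along an unbounded set of frequencies. The theorem imposes no Diophantine condition whatsoever on the translations, so this situation is generic for the statement being proved; a uniform-in-$\eta$ contraction of the correctly twisted transfer operator is a Dolgopyat/UNI-type estimate that is precisely what fails for linear IFSs, which is why polynomial Fourier decay for self-similar measures remains open.

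The actual proof sidesteps the translations entirely. Apply Cauchy--Schwarz to the stopped identity before passing to the limit: this kills the phase $e^{-2\pi i\xi b_{\omega|_\tau}}$ and leaves $|\widehat{\mu}(\xi)|^2\leq\iint\E\bigl[e^{-2\pi i\xi r_{\omega|_\tau}(x-y)}\bigr]\,d\mu(x)\,d\mu(y)$. Writing $\xi=se^{t}$, for fixed $x\neq y$ the renewal theorem makes the inner expectation converge to $\int e^{-2\pi i s(x-y)e^{-z}}\,d\lambda(z)$ with $\lambda$ absolutely continuous (this is where the irrationality enters), and that integral tends to $0$ as $|s(x-y)|\to\infty$ by Riemann--Lebesgue applied to the diffeomorphic push-forward of $\lambda$; the near-diagonal region $|x-y|\leq\delta$ is controlled by the Frostman bound $\mu\times\mu(\{|x-y|\leq\delta\})\lesssim\delta^{\eps_1}$ coming from non-atomicity. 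No statement about the distribution of $\xi b_{\omega|_\tau}$ is needed. The price of discarding the translations in this way is that the method can never give better than polylogarithmic decay, as the paper emphasises, but for the Rajchman property it suffices---whereas your route requires an input that is both unavailable by current methods and essentially as strong as the conclusion.
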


\begin{proof}[Sketch of the proof of Theorem \ref{thm:lisahlsten}]
Let us consider still the IFS 
$$\Phi = \{x \mapsto f_1(x) = x/2,x \mapsto f_2(x) = x/3 + 2/3\}.$$ 
Write $\xi = s e^{t}$ for some $s \in \R$ and $t > 0$ and consider the words $\cW_t$ of potentially \textit{different }lengths where for all $\a \in \cW_t$ we have 
$$2^{-k(\a)} 3^{-n+k(\a)} \approx e^{-t}.$$ 
Thus we replace in \eqref{eq:FourierBound} the average over $\{1,2\}^n$ by a weighted sum over $\cW_t$:
\begin{align} \label{eq:FourierBoundStop} |\widehat{\mu}(\xi)|^2 \lesssim \iint\limits_{|x-y| > \delta} \sum_{ \a \in \cW_t } 2^{-|\a|} e^{-2\pi i \xi 2^{-k(\a)} 3^{-n+k(\a)}  (x-y)} \, d\mu(x) \, d\mu(y) + \delta^{\eps_1},
\end{align}
which is possible because the words $\cW_t$ form a partition and splitting of the integration was done with some $\delta = \delta(\xi) \to 0$ as $|\xi| \to \infty$. The term $\delta^{\eps_1}$ comes from the fact that $\mu$ is \textit{non-atomic} self-similar measure, so for all $\delta > 0$, we have 
$$\mu \times \mu((x,y) : |x-y| \leq \delta) = \int \mu(B(x,\delta)) \, d\mu(x) \lesssim \delta^{\eps_1}$$ 
for some $\eps_1 > 0$ \cite{FL}. Taking logarithms, this translates the study of the large $|\xi|$ limit in \eqref{eq:FourierBoundStop} to understanding the limit of the distributions 
$$\sum_{k = 1}^{n_{t}} X_k-t$$ 
as $t \to \infty$, where $X_k = \log 2$ with probability $1/2$ and $X_k = \log 3$ with probability $1/2$. Here $n_t := n_t(w)$, $w \in \{1,2\}^\N$, is the \textit{stopping time}, that is, the smallest $n$ such that 
$$2^{-k(w_1 \dots w_n)} 3^{-n+k(w_1\dots w_n)} < e^{-t}.$$ 
In other words, setting 
$$g_{s,x,y}(z) := e^{-2\pi i s  (x-y) e^{-z}}, \quad z \in \R,$$ we see in \eqref{eq:FourierBoundStop}, once $x,y$ are fixed, that
$$\sum_{ \a \in \cW_t } 2^{-|\a|} e^{-2\pi i \xi 2^{-k(\a)} 3^{-n+k(\a)}  (x-y)} = \E_t g_{s,x,y}\Big(\sum_{k = 1}^{n_{t}} X_k-t\Big),$$
where $\E_t$ is the expectation associated to the probability measure $\sum_{ \a \in \cW_t } 2^{-|\a|}\delta_\a$ on $\cW_t$. 

The limiting distribution of $\sum_{k = 1}^{n_{t}} X_k-t$ as $t \to \infty$ can be found using \textit{renewal theory}. In particular, the irrationality of $\frac{\log 2}{ \log 3}$ and the \textit{renewal theorem} for stopping times (e.g. Kesten's renewal theorem \cite{Kesten1974} from 1974) guarantees the limit distribution of $\sum_{k = 1}^{n_{t}} X_k-t$ as $t \to \infty$ is some absolutely continuous measure $\lambda$, so
$$\lim_{t \to \infty} \E_t g_{s,x,y}\Big(\sum_{k = 1}^{n_{t}} X_k-t\Big) = \int g_{s,x,y}(z) \, d\lambda(z).$$
Now, by the definition of $g_{s,x,y}$, the oscillating integral $\int g_{s,x,y}(z) \, d\lambda(z)$ is actually the Fourier transform of push-forward of $\lambda$ under $e^{-z}$ at frequency $s(x-y)$, so if $|s(x-y)|$ is arbitrarily large, Riemann-Lebesgue lemma guarantees 
$$\int g_{s,x,y}(z) \, d\lambda(z) \to 0.$$ 
This leads to $\widehat{\mu}(\xi) \to 0$ as $|\xi| \to \infty$ since $\xi = s e^t$. Now to make this argument more formal, one would need to choose $s$, $t$ and $\delta$ depending on $\xi$ in a way we can take a double limit in this procedure.
\end{proof}

Theorem \ref{thm:lisahlsten} leaves open the case what happens when $\log r_i / \log r_j \in \Q$ for some $r_i \neq r_j$. Could one have Rajchman property here for some cases? Compared to the result of Erd\H{o}s and Salem (Theorem \ref{thm:erdossalem}) that characterise the Rajchman property of Bernoulli convolutions $\mu_\lambda$ using the non-Pisot property of $\lambda^{-1}$, one could expect some cases arising from non-Pisot numbers to still give Rajchman property in the inhomogeneous case. This was indeed confirmed and fully classified by Br\'emont \cite{Bremont} in 2021:

\begin{theorem}\label{thm:bremont}
Let $\Phi = \{x \mapsto r_j x + b_j : j \in \cA\}$ such that the maps $x \mapsto r_j x + b_j$ do not have common fixed points. If every self-similar measure $\mu$ associated to $\Phi$ and a probability vector $p_j \in (0,1)$, $j \in \cA$, is not Rajchman, then there exists $0 < \lambda < 1$ such that $\lambda^{-1}$ is a Pisot number and $\ell_j \in \Z_+$, $j \in \cA$, with $\mathrm{gcd}(\ell_j : j \in\cA) =1$ such that $r_j = \lambda^{\ell_j}$ for all $j \in \cA$ and $\Phi$ is conjugated by a similarity to an IFS such that $b_j \in \Q(\lambda)$ for all $j \in \cA$.
\end{theorem}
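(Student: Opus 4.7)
The plan is to first use Theorem \ref{thm:lisahlsten} to collapse all contraction ratios to powers of a single base $\lambda$, then write $\widehat\mu$ as a finite sum of trigonometric polynomials over a stopping-time word set, and finally extract both the Pisot condition on $\lambda^{-1}$ and the $\Q(\lambda)$-rationality of the translations from a Salem-style concentration argument applied to those polynomials.

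\textbf{Reduction to a common base.} By the contrapositive of Theorem \ref{thm:lisahlsten}, the existence of some non-Rajchman self-similar measure for $\Phi$ forces $\log r_i/\log r_j \in \Q$ for every $i,j \in \cA$. Let $\lambda \in (0,1)$ be maximal with $r_j \in \{\lambda^n : n \in \Z_+\}$ for all $j$, and write $r_j = \lambda^{\ell_j}$; maximality forces $\gcd(\ell_j : j \in \cA) = 1$. This yields the first two structural conclusions.

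\textbf{Stopping-time Fourier formula.} Iterating the self-similarity identity $\widehat\mu(\xi) = \sum_j p_j e^{-2\pi i b_j \xi}\widehat\mu(\lambda^{\ell_j}\xi)$, set $L(\omega) := \sum_k \ell_{\omega_k}$ and let $\Sigma_N := \{\omega = \omega_1\cdots\omega_n \in \cA^* : L(\omega)\geq N > L(\omega_1\cdots\omega_{n-1})\}$ be the first-passage set at cumulative length $N$. One obtains
$$\widehat\mu(\xi) = \sum_{k=0}^{M-1} B_{N,k}(\xi)\, \widehat\mu(\lambda^{N+k}\xi), \qquad B_{N,k}(\xi) := \sum_{\substack{\omega \in \Sigma_N \\ L(\omega) = N+k}} p_\omega e^{-2\pi i c_\omega \xi},$$
where $M := \max_j \ell_j$ and $c_\omega$ is the translation part of $f_{\omega_1}\circ\cdots\circ f_{\omega_n}$. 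This reduces the Rajchman question to the asymptotics of the finitely many trigonometric polynomials $B_{N,k}$.

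\textbf{Pisot and $\Q(\lambda)$-rationality.} Fix a sequence $|\xi_m|\to \infty$ with $|\widehat\mu(\xi_m)| \geq c > 0$. Choose $N_m$ so that $\lambda^{N_m}\xi_m$ lies in a fixed compact set; along a subsequence $\lambda^{N_m}\xi_m \to \eta \neq 0$ and, since $|\widehat\mu|\le 1$, we have $|B_{N_m,k}(\xi_m)| \geq c' > 0$ for some $k$ and infinitely many $m$. Mimicking the ``only if'' direction of Theorem \ref{thm:erdossalem} with the effective driving measure $\sum_\omega p_\omega\delta_{c_\omega}$ in place of a cosine product then forces the phases $c_\omega \xi_m$ to concentrate $\Mod 1$. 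Unpacking the recursion $c_\omega = b_{\omega_1} + r_{\omega_1} c_{\omega_2\cdots\omega_n}$ converts this concentration, for some nonzero $\Q$-linear combination $v$ of the $b_j$'s, into $\sum_n \dist(\lambda^{-n} v, \Z)^2 < \infty$, the classical Pisot criterion for $\lambda^{-1}$. The same analysis locates every $c_\omega$ inside a translate of a finitely generated $\Z[\lambda^{-1}]$-module; after affinely conjugating $\Phi$ so that the fixed point $b_{j_0}/(1-\lambda^{\ell_{j_0}})$ of one map is sent to $0$, the Pisot scaling then pins every remaining $b_j$ into $\Q(\lambda)$, as any transcendental component would destroy the phase alignment. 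The main obstacle is this final step: the inhomogeneity of the $\ell_j$ removes the infinite-product formula available for Bernoulli convolutions, so the Pisot condition and the $\Q(\lambda)$-rationality must be extracted jointly from the finitary sums $B_{N,k}$, and one has to track how the different weights $\ell_j$ interleave in order to preserve the Salem-style argument.
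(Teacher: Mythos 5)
The survey states this result as Br\'emont's theorem without proof, so there is no in-paper argument to compare against; I am judging your proposal on its own terms. Your first two steps are fine in outline: the contrapositive of Theorem \ref{thm:lisahlsten} does force every ratio $\log r_i/\log r_j$ to be rational, and the first-passage decomposition $\widehat{\mu}(\xi)=\sum_{k}B_{N,k}(\xi)\,\widehat{\mu}(\lambda^{N+k}\xi)$ is correct. One slip: there is no \emph{maximal} $\lambda\in(0,1)$ with every $r_j$ a positive integer power of $\lambda$ --- the admissible $\lambda$ form a sequence accumulating at $1$. You want the \emph{smallest} admissible $\lambda$, equivalently $e^{-s}$ where $s$ generates the group $\sum_j \Z\log r_j$; that choice is what gives $\mathrm{gcd}(\ell_j)=1$.

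The genuine gap is in your third step, exactly where you yourself locate the ``main obstacle'': you name it but do not overcome it. Two things fail. First, a lower bound $|B_{N_m,k}(\xi_m)|\geq c'$ on the modulus of a single weighted exponential sum does not force the phases $c_\omega\xi_m$ to concentrate modulo $1$; non-cancellation of $\sum_\omega p_\omega e^{-2\pi i c_\omega\xi}$ is much weaker than near-equality of the phases, which would require the modulus to be close to the full mass $\sum_\omega p_\omega$ --- something you have not established. Second, and more fundamentally, the Pisot criterion $\sum_n\dist(\lambda^{-n}v,\Z)^2<\infty$ needs a \emph{summable sequence of phase defects over every intermediate level} $n\leq N_m$, not just information at the terminal level. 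In the equicontractive case this is automatic from the finite product $\widehat{\mu}(\xi)=\prod_{n<N}\bigl(\sum_j p_j e^{-2\pi i b_j\lambda^n\xi}\bigr)\widehat{\mu}(\lambda^N\xi)$: each factor has modulus at most $1$, so a lower bound on the product bounds $\sum_n(1-|\cdot|)$ and yields concentration at every level. Your $\Sigma_N$-decomposition is a \emph{sum over paths of a tree}, not a product over levels, and ``mimicking'' Salem's argument is precisely the move that the inhomogeneity of the $\ell_j$ blocks. The known proof circumvents this by passing to the vector $(\widehat{\mu}(\lambda^{N}t),\dots,\widehat{\mu}(\lambda^{N+M-1}t))$, which satisfies a genuine one-step matrix recursion with companion-type matrices whose rows have $\ell^1$-norm at most $1$; non-decay of the product of these matrices then forces quantitative near-saturation at all but a controlled set of levels, and it is that level-by-level statement which feeds the Pisot--Vijayaraghavan theorem and, after the affine conjugation you describe, places the $b_j$ in $\Q(\lambda)$. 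Without a substitute for this mechanism your argument does not close.
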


Br\'emont's work \cite{Bremont} also classifies the structure of the probability vectors $p_j \in (0,1)$, $j \in \cA$, which give rise to non-Rajchman self-similar measures $\mu$ when $r_j = \lambda^{\ell_j}$ for all $j \in \cA$ with some Pisot $\lambda^{-1}$ and $\Phi$ is conjugated by a similarity to an IFS such that $b_j \in \Q(\lambda)$ for all $j \in \cA$.

Both Theorems \ref{thm:lisahlsten} and \ref{thm:bremont} raise the question of what kind of \textit{rates} of decay for the Fourier transform $\widehat{\mu}$ we could expect in the Rajchman case. In the renewal theoretic approach the main parts that influence the Fourier decay rate are the Frostman property of $\mu$, the rate at which the renewal operators 
$$\E_t g_{s,x,y}\Big(\sum_{k = 1}^{n_{t}} X_k-t\Big)$$ 
converge to the expectations with respect to an absolutely continuous distribution $\lambda$ as $t \to \infty$ and the rate of decay of the Fourier transform of the diffeomorphic push-forward of $\lambda$. Here the key issue is that the rate in the renewal theorem for the random walks $X_n$, considered here in $\R$, cannot be any faster than polynomial in $t$. As $\xi = se^{t}$, this translates to at most polylogarithmic decay for the Fourier transform. Indeed, in the work by Li and the author \cite{JialunSahlsten1} from 2019, it was also shown that when for some pair $r_i$ and $r_j$ the ratio $\log r_j / \log r_j$ is not very well approximable by rational numbers (e.g. it is not a \textit{Liouville number}), then one can obtain a \textit{polylogarithmic rate} of Fourier decay:

\begin{theorem}\label{thm:lisahlstenRate}
Let $\Phi = \{x \mapsto r_j x + b_j : j \in \cA\}$ such that none of the maps $x \mapsto r_j x + b_j$ have common fixed points and that there exists $i \neq j$ such that $\frac{\log r_j}{\log r_j}$ is not a Liouville number. Then for every self-similar measure associated to $\Phi$ and a probability vector $p_j \in (0,1)$, $j \in \cA$, there exists for some $\gamma > 0$ we have 
$$|\widehat{\mu}(\xi)| = O((\log|\xi|)^{-\gamma})$$
as $|\xi| \to \infty.$
\end{theorem}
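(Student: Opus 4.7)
The plan is to make the renewal-theoretic proof of Theorem \ref{thm:lisahlsten} quantitative. Write $\xi = s e^{t}$ with $t$ large and $s\in[1,e]$. Iterating the self-similarity relation and applying Cauchy--Schwarz as in equation \eqref{eq:FourierBoundStop}, one reduces to bounding a double $\mu\times\mu$-integral of an oscillatory sum over the stopping-time words $\cW_t$, plus a Frostman error of order $\delta^{\eps_1}$ coming from non-atomicity of the self-similar measure $\mu$ (where $\eps_1>0$ is a Frostman exponent available since $\mu$ is non-atomic self-similar). Freezing $x,y$ with $|x-y|>\delta$, the inner sum equals the renewal expectation
$$\E_{t}\,g_{s,x,y}(S_{n_t}-t), \qquad g_{s,x,y}(z)=e^{-2\pi i s(x-y)e^{-z}},$$
where $S_n=\sum_{k=1}^{n}X_k$ for i.i.d.\ steps $X_k$ taking value $-\log r_j$ with probability $p_j$, and $n_t$ is the first passage time above $t$.

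The core new step is a quantitative renewal theorem. Under the hypothesis that $\log r_i/\log r_j$ has finite irrationality exponent, I would establish the existence of $\beta>0$, $M\in\N$, and an absolutely continuous measure $\lambda$ on $\R$ with $C^M$ density, such that for every $C^M$ function $g$ with bounded derivatives up to order $M$,
$$\Bigl|\,\E_t g(S_{n_t}-t)-\int g\,d\lambda\,\Bigr|\leq C\,\|g\|_{C^M}\,t^{-\beta}.$$
The mechanism is the Fourier proof of the renewal theorem: the Laplace--Fourier transform of the renewal distribution is controlled by the characteristic function $\phi(\tau)=\sum_{j\in\cA}p_j e^{-i\tau\log r_j}$, and the non-Liouville condition on $\log r_i/\log r_j$ yields an effective lower bound $|1-\phi(\tau)|\gtrsim (1+|\tau|)^{-A}$ for $\tau\neq 0$ with some $A<\infty$. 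This quantitative non-concentration upgrades the qualitative Kesten renewal theorem used for Theorem \ref{thm:lisahlsten} to a polynomial-in-$t$ error bound via contour deformation and Paley--Wiener type arguments.

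The limiting integral $\int g_{s,x,y}\,d\lambda$ equals, after the change of variables $u=e^{-z}$, the Fourier transform of a smooth compactly supported density at frequency $s(x-y)$, and therefore decays faster than any polynomial in $|s(x-y)|$. Combining, for $|x-y|\ge\delta$ the oscillatory factor contributes $(s\delta)^{-N}$ for any $N$, while the renewal error contributes $\|g_{s,x,y}\|_{C^M}t^{-\beta}\lesssim (s/\delta)^{M}t^{-\beta}$. Choosing $\delta=(\log|\xi|)^{-c_1}$, $t=c_2\log|\xi|$, and $s$ of order $1$, the three error sources $\delta^{\eps_1}$, $(s\delta)^{-N}$ and $(s/\delta)^{M}t^{-\beta}$ can all be made of size $(\log|\xi|)^{-\gamma}$ for a common $\gamma>0$, yielding $|\widehat{\mu}(\xi)|^2\lesssim (\log|\xi|)^{-\gamma}$.

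The hard part is the quantitative renewal theorem with rate governed only by the non-Liouville condition on a single pair of ratios. One has to argue that even though the step distribution is finitely supported, the pairwise Diophantine assumption keeps $\phi(\tau)$ uniformly bounded away from $1$ with polynomial slack across all $\tau$, and this requires handling the contributions of the other $\log r_k$ via a pigeonhole argument on exceptional frequencies. A secondary technical obstacle is controlling the $C^M$ norm of $g_{s,x,y}$, which grows polynomially in $s$: this growth is precisely what forces $t\sim \log|\xi|$, and hence caps the final rate at polylogarithmic rather than polynomial; improving beyond this barrier would require a fundamentally different (non renewal based) approach, consistent with the general expectation voiced in Problems \ref{prob:salemdecay} and \ref{prob:powerdecay}.
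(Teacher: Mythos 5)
Your architecture is exactly the one behind Theorem \ref{thm:lisahlstenRate} in the paper (and in the cited work of Li and the author): the Cauchy--Schwarz/stopping-time reduction of \eqref{eq:FourierBoundStop}, the Frostman error $\delta^{\eps_1}$ from non-atomicity, and above all the identification that the whole rate question reduces to a \emph{quantitative} renewal theorem whose error is polynomial in $t$, obtained from an effective lower bound on $|1-\phi(\tau)|$ supplied by the non-Liouville hypothesis on $\log r_i/\log r_j$. Since $t\asymp\log|\xi|$, a polynomial-in-$t$ renewal rate can only ever give polylogarithmic decay in $|\xi|$, which is precisely the cap the paper describes; your closing remarks about why the barrier is structural are on target.

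There is, however, one step in your final optimization that fails as written. You take $s$ of order $1$ and $\delta=(\log|\xi|)^{-c_1}\to 0$, and claim the limiting oscillatory integral contributes $(s\delta)^{-N}$, to be made small. But with these choices $(s\delta)^{-N}=(\log|\xi|)^{c_1N}\to\infty$: the limit $\int g_{s,x,y}\,d\lambda$ is the Fourier transform of the smooth push-forward density at frequency $s(x-y)$, and for $|x-y|$ near $\delta$ this frequency tends to $0$, so that term does not decay at all (it tends to $1$). One must let $s\to\infty$ with $s\delta\to\infty$, e.g.\ $s=(\log|\xi|)^{c_3}$ with $c_3>c_1$, so that $t=\log|\xi|-\log s\asymp\log|\xi|$ still, and then balance this against the competing requirement that the renewal error $(s/\delta)^{M}t^{-\beta}$ also tend to $0$, which forces $c_1$ and $c_3$ to be small compared with $\beta/M$ and is what ultimately determines the admissible $\gamma$. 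This is exactly the compatible ``double limit'' in $s$, $t$ and $\delta$ that the sketch of Theorem \ref{thm:lisahlsten} warns about; with that correction your argument closes and coincides with the paper's route.
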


Theorem \ref{thm:lisahlstenRate} was later generalised to a more general Diophantine condition on the IFS by Algom, Hertz and Wang \cite{AHW} in 2021, which we discuss more in the Section \ref{sec:selfcon} on self-conformal sets. Theorem \ref{thm:lisahlstenRate} leaves the case open where $r_j = \lambda^{\ell_j}$ for all $j \in \cA$. If $\lambda^{-1}$ is an algebraic integer, then potentially the locations of the Galois conjugates, as with the Bernoulli convolution case, should play some role in controlling the rate of the Fourier transform of the self-similar measure. Indeed, in the work \cite{VarjuYu} Varj\'u and Yu in 2020 combined with a work of Bufetov and Solomyak \cite{BufSol} established

\begin{theorem}\label{thm:varjuyu}
Let $\Phi = \{x \mapsto r_j x + b_j : j \in \cA\}$ such that no two pair of the maps $x \mapsto r_j x + b_j$ has a common fixed point. Let $\lambda\in(0,1)$ such that $\lambda^{-1}$ is a real algebraic integer that is not a Pisot nor a Salem number and $\ell_j \in \Z_+$, $j \in \cA$, with $\mathrm{gcd}(\ell_j : j \in\cA) =1$ such that $r_j = \lambda^{\ell_j}$ for all $j \in \cA$. Then for every self-similar measure $\mu$ associated to $\Phi$ and a probability vector $p_j \in (0,1)$, $j \in \cA$, there exists for some $\gamma > 0$ such that 
$$|\widehat{\mu}(\xi)| = O((\log|\xi|)^{-\gamma})$$
as  $|\xi| \to \infty.$
\end{theorem}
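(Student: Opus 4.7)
The overall strategy is to reduce the inhomogeneous IFS to an exponential-sum problem at a single scale, and then invoke the polylogarithmic Fourier-decay estimate of Bufetov--Solomyak, which applies precisely when $\lambda^{-1}$ is an algebraic integer with a Galois conjugate strictly outside the unit circle, as is guaranteed here by the non-Pisot, non-Salem hypothesis.

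First, I would iterate the self-similar identity
$$
\widehat{\mu}(\xi) = \sum_{j \in \cA} p_j\, e^{-2\pi i b_j \xi}\,\widehat{\mu}(\lambda^{\ell_j}\xi)
$$
along a stopping time $\tau_N(w) := \min\{n : \ell_{w_1} + \cdots + \ell_{w_n} \geq N\}$ on infinite words $w$. With $L := \max_j \ell_j$, the stopped exponent $\ell_\a := \sum_{i=1}^{|\a|} \ell_{a_i}$ lies in $\{N, \ldots, N+L-1\}$, and the assumption $\mathrm{gcd}(\ell_j : j \in \cA) = 1$ ensures every residue $k \in \{0,\dots,L-1\}$ is attained by stopped words of positive total mass for $N$ large. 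Grouping the stopped words by $k := \ell_\a - N$ gives the exact decomposition
$$
\widehat{\mu}(\xi) = \sum_{k=0}^{L-1} T_{N,k}(\xi)\,\widehat{\mu}(\lambda^{N+k}\xi), \qquad T_{N,k}(\xi) := \sum_{\substack{\a\ \text{stopped at}\ \tau_N \\ \ell_\a = N+k}} p_\a\, e^{-2\pi i b_\a \xi},
$$
where after a preliminary similarity conjugation we may assume $b_j \in \Q(\lambda)$, so that all iterated translations $b_\a$ lie in $\Q(\lambda)$ as well. Choosing $N \asymp \log|\xi|$ so that $\lambda^N|\xi| = O(1)$ renders the factor $\widehat{\mu}(\lambda^{N+k}\xi)$ a harmless $O(1)$ quantity, reducing the theorem to the bound $|T_{N,k}(\xi)| = O((\log|\xi|)^{-\gamma})$ for some $\gamma > 0$ and each $k$.

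The second ingredient is the Bufetov--Solomyak method. Its core input is that if $\lambda^{-1}$ is an algebraic integer with a Galois conjugate $\alpha$ of modulus strictly larger than $1$, then sequences of the form $\lambda^{-n}\theta$ with $\theta \in \Q(\lambda)\setminus\{0\}$ must stay at quantitatively positive distance from $\Z$ for a controlled proportion of $n$: otherwise the Galois embedding would force the corresponding images $\alpha^{-n}\sigma(\theta)$ to approximate integers arbitrarily well, contradicting $|\alpha^{-n}| \to 0$. After rescaling $\xi = \lambda^{-N}\eta$ with $|\eta|\asymp 1$, the sum $T_{N,k}(\xi)$ becomes a weighted exponential sum in $\lambda^{-N}b_\a \bmod 1$; transplanting the Bufetov--Solomyak separation to this setting through an Erd\H{o}s--Kahane-style counting argument furnishes the required polylogarithmic bound on $|T_{N,k}(\xi)|$. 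Summing over $k$ completes the proof.

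The main obstacle is the last step: bridging from Bufetov--Solomyak's setting for Bernoulli convolutions, where $\widehat{\mu}_\lambda(\xi) = \prod_n \cos(\pi\lambda^n \xi)$ admits a scale-by-scale product analysis of a single orbit $\lambda^n \xi$, to the additive stopped-word sum $T_{N,k}(\xi)$. What is needed here is joint non-concentration of the whole family $\{\lambda^{-N} b_\a \bmod 1\}_\a$ indexed by words of varying length, not just of a single orbit. Varj\'u and Yu handle this by viewing the $b_\a$ as lattice points in $\Q(\lambda)$, invoking a Br\'emont-type normalisation (Theorem \ref{thm:bremont}) to exclude the algebraic coincidences that would otherwise conspire to cancel the Galois gain, and then threading the Galois conjugate bound through a combinatorial analysis of the stopped-word tree in a way that extracts an exponent $\gamma > 0$ uniform in the probability vector $(p_j)_{j\in\cA}$. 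Verifying that the gcd assumption supplies sufficient mass in each residue class $k$ and that the inhomogeneous scaling does not dilute the cancellation is the technical heart of the argument.
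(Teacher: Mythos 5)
First, a point of reference: the survey does not prove Theorem \ref{thm:varjuyu} --- it is quoted from Varj\'u--Yu \cite{VarjuYu} --- so your proposal has to be measured against that argument. Your opening identity is correct: iterating the self-similarity relation along the stopping time $\tau_N$ and grouping stopped words by the residue $k=\ell_{\a}-N$ does give $\widehat{\mu}(\xi)=\sum_k T_{N,k}(\xi)\,\widehat{\mu}(\lambda^{N+k}\xi)$, and you correctly identify the Diophantine input (an algebraic integer $\lambda^{-1}$ that is neither Pisot nor Salem has a Galois conjugate of modulus strictly greater than $1$, which is exactly the Bufetov--Solomyak hypothesis). The genuine gap is in the middle. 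Reducing to the single-scale exponential sums $T_{N,k}(\xi)=\sum_{\a}p_{\a}e^{-2\pi i b_{\a}\xi}$ and then asserting that ``joint non-concentration of the family $\{\lambda^{-N}b_{\a}\bmod 1\}$'' yields $|T_{N,k}(\xi)|=O((\log|\xi|)^{-\gamma})$ is not a valid mechanism: an exponential sum over exponentially many phases can have modulus near $1$ even when the phases are well separated, and a Frostman-type non-concentration of the points does not by itself produce cancellation. The homogeneous case is tractable precisely because of the product structure $\widehat{\mu}_\lambda(\xi)=\prod_n\widehat{\nu}_n(\xi)$, which converts a two-point non-concentration at each scale into a multiplicative loss; collapsing everything to the single scale $N\asymp\log|\xi|$ discards exactly that structure, and you supply no substitute. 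The actual argument keeps the multiscale structure: one encodes $\mu$ as the law of $\sum_m\lambda^m\zeta_m$ where the digits $\zeta_m$ are driven by a finite-state Markov chain recording the overhang of the word across level $m$ (the gcd condition is what guarantees this chain genuinely branches at a positive density of levels), one bounds $|\widehat{\mu}(\xi)|$ by a product of conditional characteristic functions over a sparse set of levels via a conditioning/decoupling inequality, and only then does the Bufetov--Solomyak counting lemma enter, showing that enough of the $\asymp\log|\xi|$ levels $m$ have $\|\xi\lambda^m(b_i-b_j)\|$ bounded below to force polylogarithmic decay. Your write-up explicitly defers this entire step to ``Varj\'u and Yu handle this by\dots'', which is the theorem.

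Two further errors. The claim that ``after a preliminary similarity conjugation we may assume $b_j\in\Q(\lambda)$'' is false: a similarity $x\mapsto cx+d$ has two free parameters and cannot place $|\cA|$ arbitrary real translations into $\Q(\lambda)$, and Br\'emont's theorem (Theorem \ref{thm:bremont}) runs in the opposite direction (failure of the Rajchman property \emph{implies} such a conjugacy exists); it is not a normalisation you may impose. It is also unnecessary: the separation lemma is applied to sequences $\xi\lambda^m t$ with $t=b_i-b_j$ an arbitrary real number, and only the algebraicity of the ratio $\lambda^{-1}$ is used, through the integer linear recurrence that the nearest integers to $\xi\lambda^m t$ must satisfy. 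Relatedly, your justification of the separation --- that a Galois embedding ``would force the images $\alpha^{-n}\sigma(\theta)$ to approximate integers'' --- does not work, since Galois embeddings are not continuous and do not transport metric approximation statements; the correct mechanism is that if $\|\xi\lambda^m t\|$ were small for too many $m$, the nearest integers would exactly satisfy the recurrence attached to the minimal polynomial of $\lambda^{-1}$, and the existence of a conjugate of modulus greater than $1$ makes such an integer solution incompatible with the size constraints coming from $\xi\lambda^m t$.
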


Moreover, in \cite{VarjuYu} Varj\'u and Yu also completely characterised the self-similar sets that are sets of uniqueness to trigonometric series.

Why is it that we can only get \textit{polylogarithmic} rate of decay for the Fourier transform using the renewal theoretic approach? Fundamentally this is because the Cauchy-Schwarz inequality (e.g. \eqref{eq:FourierBoundStop}) destroys the whole contribution of the \textit{translations} $b_j$ from the IFS. This crucially prevents us to get anything faster than polylogarithmic decay using the renewal theoretic approach. Thus we need to use the translations of the IFS more crucial way. The following problem is plausible (e.g. given the polylogarithmic rate in Theorem \ref{thm:lisahlstenRate}):

\begin{problem}\label{prob:powerdecaynonhom}
Let $\Phi = \{x \mapsto r_j x + b_j : j \in \cA\}$ such that no two pair of the maps $x \mapsto r_j x + b_j$ has a common fixed point and there exists $i \neq j$ such that $\frac{\log r_j}{\log r_j}$ is not Liouville. Then every self-similar measure associated to $\Phi$ and a probability vector $p_j \in (0,1)$, $j \in \cA$, satisfies for some $\alpha > 0$ that 
$$|\widehat{\mu}(\xi)| = O(|\xi|^{-\alpha})$$
as $|\xi| \to \infty.$
\end{problem}

One can approach Problem \ref{prob:powerdecaynonhom} in a similar fashion as with the Bernoulli convolutions by \textit{randomising} the contractions $(r_j : j \in \cA)$ by adapting an Erd\H{o}s-Kahane type argument attaining a similar statement as Theorem \ref{thm:shmerkin}. This was done by Solomyak in \cite{Solomyak} in 2019:

\begin{theorem}\label{thm:solomyakPoly}
Let $\cA$ be finite. Then there exists $\cE \subset (0,1)^\cA$ of Hausdorff dimension $0$ such that the following holds: for any 
$$\mathbf{r}= (r_j : j\in \cA) \in (0,1)^\cA \setminus \cE \text{ and } \mathbf{b} = (b_j : j\in \cA) \in \R^\cA$$ such that 
$$\Phi_{\mathbf{r},\mathbf{b}} =  \{x \mapsto r_j x + b_j : j \in \cA\}$$ 
has no common fixed points, and any self-similar measure associated $\Phi_{\mathbf{r},\mathbf{b}}$ and a probability vector $p_j \in (0,1)$, $j \in \cA$, there exists $\alpha > 0$ such that 
$$|\widehat{\mu}(\xi)| = O(|\xi|^{-\alpha})$$
as $|\xi| \to \infty.$
\end{theorem}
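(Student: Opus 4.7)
The plan is to adapt the Erd\H{o}s--Kahane method used in Theorem \ref{thm:shmerkin} to the non-equicontractive setting by treating $\mathbf{r} = (r_j)_{j \in \cA}$ as the random parameter, while the translations $\mathbf{b} = (b_j)_{j \in \cA}$ and the weights $(p_j)$ are considered fixed. The goal is to show that the failure of polynomial Fourier decay at some large $\xi$ forces $\mathbf{r}$ to satisfy a strong Diophantine condition, and that the locus of $\mathbf{r}$ satisfying such conditions at arbitrarily large scales has Hausdorff dimension zero. Compared to the one-parameter homogeneous case (Bernoulli convolutions), the new feature is that the "algebraic" obstruction has to be unravelled in $|\cA|$ parameters simultaneously.

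\textbf{Main sum and transversality.} Iterating the self-similarity relation $\mu = \sum_j p_j (f_j)_* \mu$ yields, for every $n \geq 1$,
\[
\widehat{\mu}(\xi) = \sum_{\mathbf{i} \in \cA^n} p_{\mathbf{i}} \, e^{-2\pi i \xi c_{\mathbf{i}}(\mathbf{r},\mathbf{b})} \, \widehat{\mu}(r_{\mathbf{i}} \xi),
\]
where $r_{\mathbf{i}} = r_{i_1}\cdots r_{i_n}$ and $c_{\mathbf{i}}(\mathbf{r},\mathbf{b})$ is the translation part of $f_{i_1}\circ\cdots\circ f_{i_n}$, a polynomial in $\mathbf{r}$ with coefficients in $\mathbf{b}$. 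Replacing $\cA^n$ by a stopping-time section $\cW_t \subset \cA^*$ along which $r_{\mathbf{i}}|\xi| \asymp 1$ and using $|\widehat{\mu}| \leq 1$, it suffices to produce polynomial cancellation in
\[
S_t(\xi,\mathbf{r}) := \sum_{\mathbf{i} \in \cW_t} p_{\mathbf{i}} e^{-2\pi i \xi c_{\mathbf{i}}(\mathbf{r})}.
\]
Expanding $|S_t|^2$ and applying a Solomyak-type transversality estimate for the polynomials $c_{\mathbf{i}}-c_{\mathbf{j}}$ (the no-common-fixed-point hypothesis forces each such difference, for $\mathbf{i}\neq \mathbf{j}$, to be non-degenerate in $\mathbf{r}$) one shows that away from a set of $\mathbf{r}$ of exponentially small Lebesgue measure, the phases $\{\xi c_{\mathbf{i}}(\mathbf{r})\}_{\mathbf{i} \in \cW_t}$ equidistribute modulo $1$ enough to give $|S_t(\xi,\mathbf{r})| \lesssim |\xi|^{-\alpha}$ for some $\alpha > 0$.

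\textbf{From Lebesgue-null to zero-dimensional exceptional set.} To upgrade "full Lebesgue measure" to "zero Hausdorff dimension" one runs the Erd\H{o}s--Kahane combinatorial bookkeeping. Any $\mathbf{r}$ for which $|S_t|$ fails to decay must satisfy $\dist(\xi (c_{\mathbf{i}}-c_{\mathbf{j}})(\mathbf{r}),\Z) \lesssim |\xi|^{-\alpha'}$ for a positive proportion of pairs $(\mathbf{i},\mathbf{j}) \in \cW_t^2$; each such resonance pins $\mathbf{r}$ to an algebraic strip of controlled width. A careful count of the admissible patterns of resonances --- whose number grows only subexponentially in $t$ --- shows that the bad locus at scale $|\xi| \asymp e^t$ is covered by a family of balls whose $s$-dimensional Hausdorff content is arbitrarily small for every $s > 0$. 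Intersecting over a sequence $|\xi_k| \to \infty$ and applying the Hausdorff-measure analogue of Borel--Cantelli produces the required exceptional set $\cE \subset (0,1)^\cA$ with $\Hd \cE = 0$, uniformly in the choice of $\mathbf{b}$ and of the probability vector.

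\textbf{Main obstacle.} The principal technical hurdle is the uniform transversality and counting estimate: the functions $c_{\mathbf{i}}(\mathbf{r})$ are high-degree polynomials of varying word-length, so one must quantify the separation of $c_{\mathbf{i}} - c_{\mathbf{j}}$ uniformly across long words from different levels of the stopping-time partition, and rule out "conspiracies" in which many near-resonances are met simultaneously on a positive-dimensional algebraic subvariety of $(0,1)^\cA$. This is the part where the argument genuinely diverges from the one-parameter Kahane analysis of $\prod_n \cos(2\pi\lambda^n \xi)$ and where the need for a multi-parameter version of the non-concentration lemma underlying the Erd\H{o}s--Kahane combinatorics becomes the essential input.
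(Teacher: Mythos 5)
The survey does not reproduce Solomyak's argument; it only records that Theorem \ref{thm:solomyakPoly} is obtained by ``adapting an Erd\H{o}s--Kahane type argument'' with the contractions randomised, so your overall strategy --- treating $\mathbf{r}$ as the random parameter with $\mathbf{b}$ and the weights fixed, and aiming for a covering-number bound rather than a mere Lebesgue-null bound on the bad parameters --- is the right one in outline. But two steps do not hold up. First, the reduction to $S_t$ is a non sequitur: from $\widehat{\mu}(\xi)=\sum_{\mathbf{i}\in\cW_t}p_{\mathbf{i}}e^{-2\pi i\xi c_{\mathbf{i}}}\widehat{\mu}(r_{\mathbf{i}}\xi)$, the bound $|\widehat{\mu}|\le 1$ only yields $|\widehat{\mu}(\xi)|\le 1$; the factors $\widehat{\mu}(r_{\mathbf{i}}\xi)$ depend on $\mathbf{i}$ and are not close to $1$ when $r_{\mathbf{i}}|\xi|\asymp 1$, so they cannot be discarded to leave $S_t$. (This is repairable by stopping deeper, at $r_{\mathbf{i}}|\xi|\asymp|\xi|^{-\eps}$, so that $\widehat{\mu}(r_{\mathbf{i}}\xi)=1+O(|\xi|^{-\eps})$.)

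Second, and more seriously, the heart of your argument --- that failure of decay of $|S_t|^2=\sum_{\mathbf{i},\mathbf{j}}p_{\mathbf{i}}p_{\mathbf{j}}e^{-2\pi i\xi(c_{\mathbf{i}}-c_{\mathbf{j}})(\mathbf{r})}$ forces resonances $\dist(\xi(c_{\mathbf{i}}-c_{\mathbf{j}})(\mathbf{r}),\Z)\lesssim|\xi|^{-\alpha'}$ for many pairs, and that the resonant locus admits a subexponential covering --- is asserted rather than proved, and it is exactly where the Erd\H{o}s--Kahane mechanism fails as you have set it up. The EK counting needs a recursive structure: consecutive phases must differ by multiplication by one of boundedly many factors, so that the integer part of the $(k+1)$st phase is determined up to $O(1)$ choices by that of the $k$th. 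The family $\{c_{\mathbf{i}}-c_{\mathbf{j}}:\mathbf{i},\mathbf{j}\in\cW_t\}$ consists of exponentially many essentially unrelated polynomials of degree $\asymp t$, and ``a positive proportion of pairs resonate'' does not pin $\mathbf{r}$ into few algebraic strips. The missing idea, and the way the cited proof actually proceeds, is to first extract a genuine infinite-convolution factor from $\mu$: choose $i\neq j$ with distinct fixed points, observe that $r_{ij}=r_{ji}$ while $c_{ij}\neq c_{ji}$ (this is precisely the no-common-fixed-point hypothesis), and condition on the positions where the blocks $ij$, $ji$ occur in the coding. Conditionally, $\mu$ becomes a convolution of two-point measures at scales $\Pi_k=\prod_{l\in\cA} r_l^{N_l(k)}$, whence $|\widehat{\mu}(\xi)|\le\E\prod_k|\cos(\pi(c_{ij}-c_{ji})\Pi_k\xi)|$, and the Erd\H{o}s--Kahane counting is run on the products $\prod_l r_l^{N_l}$, whose consecutive ratios take only finitely many values. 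That is what produces a Hausdorff-dimension-zero exceptional set in $(0,1)^{\cA}$ uniformly in $\mathbf{b}$ and the probability vector; without this convolution step, bounding $S_t$ pointwise in $\mathbf{r}$ is essentially a restatement of the theorem.
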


No explicit example of a non-equicontractive self-similar measure in $\R$ is yet known that has polynomial Fourier decay. Given the polynomial Fourier decay exhibited by the Bernoulli convolution $\mu_\lambda$ for Garsia $\lambda^{-1}$, the following problem is plausible with potentially similar methods:

\begin{problem}\label{prob:Garsianonhom}
Let $\Phi = \{x \mapsto r_j x + b_j : j \in \cA\}$ such that the maps $x \mapsto r_j x + b_j$ do not have common fixed points. Let $0 < \lambda < 1$, $\lambda \neq 1/2$, be such that $\lambda^{-1}$ is a Garsia number and $\ell_j \in \Z_+$, $j \in \cA$, with $\mathrm{gcd}(\ell_j : j \in\cA) =1$ such that $r_j = \lambda^{\ell_j}$ for all $j \in \cA$. Construct probability vectors $p_j \in (0,1)$, $j \in \cA$, such that the self-similar measure $\mu$ for $\Phi$ associated to the probability vector satisfies for some $\alpha > 0$ that 
$$|\widehat{\mu}(\xi)| = O(|\xi|^{-\alpha})$$ 
as $|\xi| \to \infty$.
\end{problem}

However, going beyond this case, achieving polynomial Fourier decay in the self-similar case seems to be very challenging. On the other hand, this problem can be overcome by \textit{perturbing} the self-similar IFS by adding some \textit{non-linearity} to maps, where the non-linearity will amplify into more rapid Fourier decay rates. This will be the topic of the next section.

\section{Non-linear self-conformal iterated function systems in $\R$}\label{sec:selfcon}

In this section we will move to IFSs of the form $\Phi = \{f_j : j \in \cA\}$ with non-vanishing $f_j'$ such that $f_j'$ is a $\alpha$-H\"older regular function. We call these self-conformal $C^{1+\alpha}$ IFSs. These IFSs are very important also in the study of hyperbolic dynamics, as they arise from the Poincar\'e sections of Anosov flows and chaotic billiards (see e.g. \cite{Bowen,BowenSeries, AGY}). A famous example are the analytic IFSs given by the subshifts coming from the limit sets of Fuchsian groups describing hyperbolic trapped sets on non-compact hyperbolic surfaces \cite{BowenSeries}.

Most of the progress currently has been focused on $C^{\omega}$ IFSs (that is, real analytic $f_j$) and $C^2$ IFSs, but there has been also some recent progress in the $C^{1+\alpha}$ category, which we will discuss. If the maps $f_j \in \Phi$ are just similitudes, then the theory reduces to Section \ref{sec:selfsim}, so we are interested in the properly non-linear cases, that is, when for one $j \in \cA$ the derivative $f_j'$ is not a constant function. This might help in bounding the Fourier transforms of stationary measures for $\Phi$.

\subsection{Fourier decay via nonlinearity of the Gauss map}

We will start from the very concrete cases of non-linear IFSs that arise from Diophantine approximation and the \textit{Gauss map} 
$$x \mapsto G(x) := \frac{1}{x} \mod 1.$$ 
In the early 1980s Robert Kaufman wrote two articles \cite{Kaufman,Kaufman2} on constructing Rajchman measures on sets arising from Diophantine approximation, in particular the \textit{well-} and \textit{badly approximable numbers}. These are fractal sets of irrational numbers $x$ defined by the rate at which they can be approximated by rational numbers. The first construction was on the well-approximable numbers $W(\alpha)$ for rate exponent $\alpha > 0$, which Kaufman proved to be \textit{Salem sets}, that is, for any $\eps > 0$, they support measures $\mu$ such that $|\widehat{\mu}(\xi)| \lesssim |\xi|^{-\delta/2 + \eps}$ as $|\xi| \to \infty$, where $\delta = \Hd W(\alpha)$. See also the work of Bluhm \cite{Bluhm} from 1998 for more general settings and \cite{PoVe,PVZZ} from 2000 and 2022 for the importance of constructing Rajchman measures on fractals arising from Diophantine approximation.

It turns out that also in the case of \textit{badly approximable} numbers $B_N$, which \textit{is} an attractor of the IFS 
$$\Phi_N := \{x \mapsto f_a(x) = 1/(x+a) : a = 1,2,...,N\},$$ 
where the maps $f_a$ form the inverse branches of the Gauss map $G(x)$ up to the $N$th branch, one can also construct Rajchman measures with polynomial Fourier decay. However, now the structure of the set $B_N$ is very different from the limsup type set of well-approximable numbers $W(\alpha)$.  The nonlinearity in the system manifests in the distribution of the distortion factors $f_{\a}'' (x)/ f_\a(x)$, where $f_a = f_{a_1} \circ \dots \circ f_{a_n}$. Due to the specific Diophantine nature of the distortions coming from the Gauss map, the distortions exhibit a form of \textit{non-concentration}, which Kaufman exploited to construct a suitable Bernoulli-type measure on the set $B_N$ when $N \geq 3$. Queffélec-Ramaré \cite{QetR} from 2003 later managed to extend Kaufman's work to case $N = 2$ using more refined information on the non-concentration properties of the distortions.

In \cite{JordanSahlsten} from 2016 with Jordan we adapted the approach of Kaufman, Queffélec and Ramaré to prove a result that applies to all self-conformal measures on $B_N$ by adapting ideas from thermodynamical formalism and large deviation theory. 

\begin{theorem}\label{thm:JordanSahlsten}
Let $\mu_\phi$ be a stationary Gibbs measure for the IFS $\Phi = \{\frac{1}{x+a} : a \in \N\}$ associated to any locally H\"older continuous potential $\phi$ (see \cite{JordanSahlsten} for a definition) such that $\mu_\phi$ has Hausdorff dimension at least $1/2$ with the tail assumption $\mu_\phi([0,\frac{1}{a}]) = O(a^{-\gamma})$ for some $\gamma > 0$ as $a \to \infty$ along integers. Then there exists $\alpha > 0$ such that 
$$|\widehat{\mu}_\phi(\xi)| = O(|\xi|^{-\alpha})$$ 
as $|\xi| \to \infty$. 
\end{theorem}

A particular example of a self-conformal measure $\mu$, where Theorem \ref{thm:JordanSahlsten} applies to, is defined by the probability vector $p_a = 2^{-a}$, $a \in \N$. In this case the associated self-conformal measure $\mu$ has a special meaning in the sense that the distribution function $\mu(0,x]$ is the Minkowski's question mark function $?(x)$ (see e.g. \cite{Salem} for discussion on this function), and the Fourier transform of $\mu$ equals the Fourier-Stieltjes transform of $?(x)$. Since $?(x)$ as a ``slippery'' Devil's staircase exhibits smoothness, in 1954 Salem asked in \cite{Salem} whether the Fourier-Stieltjes transform of $?(x)$ decay at infinity, that is, whether the measure $\mu$ is Rajchman or not. Since $\mu$ has Hausdorff dimension $> 1/2$, Theorem \ref{thm:JordanSahlsten} implies an affirmative answer to Salem's question and gives a polynomial rate.

Let us give a rough idea of this approach in the case $N = 2$ of Theorem \ref{thm:JordanSahlsten}.

\begin{proof}[Sketch of the proof of Theorem \ref{thm:JordanSahlsten}] Let $\mu$ be a non-atomic self-conformal measure with equal weights $1/2$ for the IFS 
$$\Phi_2 = \{x \mapsto f_1(x) = 1/(x+1) , x \mapsto f_2(x) = 1/(x+2) \}.$$ 
Compared to the self-similar case, recalling \eqref{eq:FourierBound}, we can iterate $\mu = \frac{1}{2}f_1(\mu) + \frac{1}{2}f_2(\mu)$ up to some $n = c \log |\xi|$, for some $c > 0$, to obtain

\begin{align} \label{eq:FourierBoundGauss} |\widehat{\mu}(\xi)| \lesssim \int \Big|\frac{1}{2^n} \sum_{ \a \in \{1,2\}^n } e^{-2\pi i \xi f_\a(x)}\Big| \, d\mu(x),
\end{align}
where $f_\a(x) = f_{a_1} \circ \dots \circ f_{a_n}(x)$. Then fixing $\eps > 0$, in \eqref{eq:FourierBoundGauss} we consider only the ``regular'' words $\cR_n(\eps)$ formed of those $\a \in \{1,2\}^n$ for which 
$$e^{-(\lambda + \eps) n} \lesssim |f_\a'(x)| \lesssim e^{-(\lambda - \eps) n}$$ 
for all $x \in [0,1]$, where $\lambda = \int \log |G'(x)| \, d\mu(x)$ is the Lyapunov exponent associated to the IFS $B_2$. By the large deviation principle, we know that most words $\a$ satisfy this property, so in particular as $n = c \log |\xi|$ the bound for $ |\widehat{\mu}(\xi)|$ in \eqref{eq:FourierBoundGauss} is bounded by 
$$\int |S_\xi(x)| \, d\mu(x) + O(|\xi|^{-\delta})$$ 
for some $\delta > 0$, where 
$$S_\xi(x) := \frac{1}{2^n} \sum_{ \a \in \cR_n(\eps) } e^{-2\pi i \xi f_\a(x)}.$$ 
Compare this to the stopping time bound in the self-similar case \eqref{eq:FourierBoundStop}. 

Then to go from the self-conformal measure $\mu$ to Lebesgue measure, we use an idea motivated by the \textit{large sieve inequalities} in analytic number theory (e.g. (4) in \cite{BakerFirst}), exploiting the Frostman regularity of $\mu$ enable us to bound $\int |S_\xi(x)| \, d\mu(x)$ in terms of the $L^\infty$ norm of $S_\xi'$ and $L^2$ norm of $S_\xi$ in terms of the Lebesgue measure: 
$$\int |S_\xi(x)| \, d\mu(x) \lesssim \|S_\xi\|_2^2 \|S_\xi'\|_\infty^{\eps_1} |\xi|^{\eps_3} + |\xi|^{-\eps_3}$$ 
with suitable choices of constants $\eps_1,\eps_2,\eps_3 > 0$. Here $ \|S_\xi'\|_\infty$ grows in $|\xi|$, so in order to get Fourier decay, we need to ensure $\|S_\xi\|_2^2$ decays rapidly enough. This is where the non-linearity of the inverse branches manifests itself.

By expanding the square and using the triangle inequality, we can bound
$$\|S_\xi\|_2^2 \leq 2^{-2n} \sum_{\a \in \cR_n(\eps) }\sum_{\b \in \cR_n(\eps)}\Big| \int e^{-2\pi i \xi (f_\a(x) - f_\b(x))} \, dx \Big|.$$ 
It is interesting to compare this to the self-similar case, namely \eqref{eq:FourierBoundStop} achieved using the stopping time and Cauchy-Schwarz, where one studied the phases of the form $f_\a(x) - f_\a(y)$ for the \textit{same} word $\a$ but \textit{pairs} of $x$ and $y$, we now use \textit{same} $x$, but \textit{pairs} of words $\a$ and $\b$. This creates a conceptual difference in the proofs. Thus we can reduce the problem of bounding $\|S_\xi\|_2^2$ to decay of oscillatory Lebesgue integrals with the \textit{non-linear} phase 
$$x \mapsto \psi_{\a,\b}(x) := f_\a(x) - f_\b(x),$$ which we can control using a kind of stationary phase approximation, which requires us to bound from below the derivative of the phase function $\psi_{\a,\b}(x)$. This is the first time the unique properties of the Gauss map appear. Crucially we have:
$$\psi_{\a,\b}'(x) = \phi_{\a,\b}(x) ((q_{n-1}(\a) - q_{n-1}(\b)) x + q_n(\a) - q_n(\b)), \quad x \in (0,1),$$
where 
$$\phi_{\a,\b}(x) := (-1)^{n} \frac{(q_{n-1}(\a) + q_{n-1}(\b))x + q_{n}(\a) + q_{n}(\b)}{(q_{n-1}(\a) x + q_{n}(\a) )^2(q_{n-1}(\b) x + q_{n}(\b))^2}$$ 
and $q_n(\a)$ is the $n$th \textit{continuant} associated to the word $\a$, that is, writing the continued fraction expansion associated to $\a$, we obtain a rational number $p_n(\a)/q_n(\a)$, and the continuant is the denominator of this rational number. Thus $q_n(\a) \neq q_n(\b)$ and $q_{n-1}(\a) \neq q_{n-1}(\b)$, depending on how large $|q_{n}(\a) - q_{n}(\b)|$ is compared to $|q_{n-1}(\a) - q_{n-1}(\b)|$, we can use the integration by parts to bound 
$$\Big| \int e^{-2\pi i \xi (f_\a(x) - f_\b(x))} \, dx \Big|$$ 
by 
$$|\xi|^{\eps_4-1/2} |q_{n-1}(\a) - q_{n-1}(\b)|^{-1/2} \quad \text{ or } \quad |\xi|^{\eps_4-1/2} |q_{n}(\a) - q_{n}(\b)|^{-1/2}$$ for some $\eps_4 > 0$.

To then get enough decay when summing over $\a$ and $\b$, we just need to bounds for $|q_{j}(\a) - q_{j}(\b)|$ for $j = n-1,n$ from below and the number of pairs $(\a,\b)$ for which $q_{n-1}(\a) = q_{n-1}(\b)$ and $q_n(\a) = q_n(\b)$. The latter is trivial: it can only happen when $\a = \b$. In the former case, just because the continuants are integers, we always have $|q_{n}(\a) - q_{n}(\b)| \geq 1$ or $|q_{n-1}(\a) - q_{n-1}(\b)| \geq 1$. These easy bounds are already enough to prove polynomial Fourier decay for self-conformal measures $\mu$ with Hausdorff dimension greater than $2/3$. But performing more careful analysis of the cardinality of the set of pairs $(\a,\b)$ for which 
$$|q_n(\a) - q_n(\b)| \approx e^{k \lambda/2}, \quad k = 1,2,\dots,n/2$$ 
for some $\eps_5 > 0$ (which requires us to use the property that $q_{n}(\a) = q_n(\overline{\a})$ and  $q_{n-1}(\a) = p_{n-1}(\overline{\a})$ for the \textit{reverse} $\overline{\a} = a_n \dots a_1$ of $\a = a_1\dots a_n$), following an argument due to Queffélec and Ramaré \cite{QetR}, we can improve the decay allowing us to include cases of $\mu$ with dimension near $1/2$. This includes the set $B_2$.
\end{proof}

The decay property of measures $\mu$ supported on $B_N$ have an interesting connection to the \textit{Littlewood conjecture} stating that 
$$\liminf_{q \to \infty} q \|qx\|\|qy\| = 0$$ 
for all $x,y \in [0,1]$, where $\|z \| = \dist(z,\Z)$. In \cite{PoVe} from 2000 Pollington and Velani showed using the measures by Kaufman, Queffélec and Ramaré on $B_N$ that if 
$$x \in \mathbf{B} = \bigcup_{N \in \N} B_N = \{x \in [0,1] : \liminf_{q \to \infty} q\|qx\| > 0\},$$ 
then there exists $G_x \subset \mathbf{B}$ of full Hausdorff dimension such that for any $y \in G_x$, we have $q\|qx\|\|qy\| \leq (\log q)^{-1}$ for infinitely many $q$. Now, there have been much interest to study \textit{inhomogeneous} versions of these questions and here one would need to study Fourier transforms of measures supported on inhomogeneous analogue of badly approximable numbers 
$$\mathbf{B}_\gamma(c) := \{x \in [0,1] : q\|qx - \gamma\| > c\}$$ 
for fixed $\gamma \in [0,1]$ and a small $c > 0$. Following is a problem from \cite[Claim 1]{PVZZ} in 2022:

\begin{problem}\label{prob:inhom}
For any $\gamma \in [0,1]$ and small enough $c > 0$, the set $\mathbf{B}_{\gamma}(c)$ supports a probability measure $\mu$ such that $\widehat{\mu}$ decays polynomially.
\end{problem} 

The main obstacle to proving Problem \ref{prob:inhom} is that unlike $B_N$, the inhomogeneous versions are not clearly attractors of the same IFS that describes $B_N$. Potentially they can be described with such attractors to some other IFS, which might make it possible to adapt the methods of Kaufman and Queffélec-Ramaré.

\begin{remark} If we go below dimension $1/2$ in Theorem \ref{thm:JordanSahlsten}, the methods of Kaufman and Queffélec-Ramaré break down, and we end up having bad estimates from the stationary phase bounds. This highlighted that some deeper multiscale analysis ideas for $p_n(\a)/q_n(\a)$ were needed that go beyond the Kaufman and Queffélec-Ramaré type method to bound Fourier transforms in the non-linear case. A new method was introduced in the groundbreaking work of Bourgain and Dyatlov \cite{BD1} from 2017 based on the \textit{discretised sum-product theorem}, which we will discuss in the next section. We highlight that C. Stevens in his PhD thesis \cite{Stevens} from 2022, adapting the method of Bourgain and Dyatlov, was able to remove the dimension $1/2$ assumption on Theorem \ref{thm:JordanSahlsten}. Thus Theorem \ref{thm:JordanSahlsten} holds without any assumption on the Hausdorff dimension of $\mu$ except positivity.
\end{remark}

\subsection{Additive combinatoric approach and Fuchsian groups}

In 2017 in \cite{BD1} Bourgain and Dyatlov introduced a method to study the Fourier transforms of measures based on additive combinatorics, which in particular exploits the sum-product phenomenon. This method was initially introduced in the study of the Fourier transforms of the Patterson-Sullivan measures for limit sets of Fuchsian groups, which arise from subshifts of non-linear IFS formed of fractional linear transformations. However, the method has also been very successful in other contexts, such as exponential mixing in the work of Li and Pan \cite{LiPan} (see also \cite{LPX}), in the study of Fourier transforms of stationary measures for random walks on groups starting from the breakthrough work of Li \cite{Li} in 2018, see also \cite{Li2,JialunNaudPan}, and bounding the Fourier transform of self-conformal measures in general suitably non-linear iterated function systems \cite{SahlstenStevens,BakerSahlsten}, which we will discuss later.

At its heart, the method of Bourgain and Dyatlov rely on the following Fourier decay theorem for multiplicative self-convolutions $\mu^{\odot k}$ of Frostman measures $\mu$ (i.e. $\mu(B(y,r)) \lesssim r^{\kappa}$ for all $r > 0$, $y \in \R$ and Frostman exponent $\kappa > 0$), which was stated in Bourgain's original article \cite{Bourgain2010} from 2010 (the optimised $k > 1/\kappa$ was recently discovered by Orponen, de Saxc\'e and Shmerkin \cite{OSS} in 2023).

\begin{theorem}\label{thm:discretised}
For all $\kappa > 0$, there exists $\eps_1,\eps_2 > 0$ such that the following holds. Let $\mu$ be any probability measure on $[1/2,1]$ and $N \in \N$ large enough. Assume $\mu(B(y,\rho)) \lesssim \rho^{\kappa}$ for all $\rho \in [1/N,1/N^{\eps_1}]$ and $y \in \R$. Then for all $k > 1/\kappa$, $\xi \in \R$, $|\xi| \approx N$, we have
$$|\widehat{\mu^{\odot k}}(\xi)| = \Big|\int e^{-2\pi i \xi x_1 \dots x_k} \, d\mu(x_1) \dots d\mu(x_k)\Big| \lesssim |\xi|^{-\eps_2}.$$
\end{theorem}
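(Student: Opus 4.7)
The plan is to reduce the statement to a multiplicative flattening result for Frostman measures on $[1/2,1]$ and then invoke Bourgain's discretised sum–product theorem. On $[1/2,1]$ the map $-\log$ is bi-Lipschitz, so the Frostman bound $\mu(B(y,\rho)) \lesssim \rho^\kappa$ at scales $\rho \in [1/N, 1/N^{\eps_1}]$ transfers to the pushforward $\nu := (-\log)_*\mu$ on a compact subinterval of $\R$. Since multiplication in $[1/2,1]$ linearises to addition after $-\log$, the multiplicative self-convolutions $\mu^{\odot j}$ correspond, up to smooth diffeomorphism, to the additive convolutions $\nu^{*j}$; the goal then becomes to show that $\nu^{*k}$ spreads out too much, at scale $\delta := 1/N$, to sustain a large Fourier coefficient at frequency of size $N$.

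The core of the argument is an inductive multiplicative flattening: if $\mu^{\odot j}$ has effective Frostman exponent $\alpha_j$ at scale $\delta$ with $\alpha_j < 1 - \eps_2$, then $\mu \odot \mu^{\odot j}$ has effective Frostman exponent $\alpha_{j+1} \geq \alpha_j + \eta$, for some definite gain $\eta = \eta(\kappa) > 0$. The reason is the sum–product phenomenon: a set at scale $\delta$ on which $\mu^{\odot j}$ concentrates cannot be simultaneously approximately closed under addition (which would be required to prevent $\nu^{*(j+1)}$ from spreading under additive convolution) and approximately closed under multiplication (which on $[1/2,1]$ is also an additive structure after $\log$), unless its discretised dimension is already essentially $1$; this is precisely Bourgain's discretised sum–product/ring theorem. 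Iterating the gain $k > 1/\kappa$ times forces $\mu^{\odot k}$ past the threshold $1 - \eps_2$, from which the desired bound on $|\widehat{\mu^{\odot k}}(\xi)|$ at $|\xi| \approx N$ follows via Plancherel: the flattening $\mu^{\odot k}(B(x,\delta)) \lesssim \delta^{1-\eps_2}$ implies $\|\mu^{\odot k} * \phi_\delta\|_2^2 \lesssim \delta^{\eps_2 - 1}$ for a bump $\phi_\delta$ of width $\delta$, and controlling $|\widehat{\mu^{\odot k}}(\xi)|$ by this $L^2$-energy yields the power $|\xi|^{-\eps_2}$.

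The principal obstacle is the flattening step, which is essentially equivalent to a quantitative discretised ring theorem at a fixed scale. The delicate points are the shrinkage of the allowed range of scales under iteration, so that $\eps_1$ must be chosen small enough to survive $k$ rounds of convolution while keeping the Frostman hypothesis valid on $[1/N, 1/N^{\eps_1}]$, and ensuring the sum–product gain $\eta(\alpha)$ does not degenerate as $\alpha$ approaches the threshold $1 - \eps_2$. The sharper threshold $k > 1/\kappa$ of Orponen–de Saxc\'e–Shmerkin is obtained not by naive iteration but through a single refined $L^2$-decay bound for the multiplicative convolution, which packages the Frostman gain of all $k$ steps into one estimate and so avoids the cumulative losses inherent in a step-by-step inductive scheme.
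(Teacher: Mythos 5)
Your overall architecture is reasonable and close in spirit to the Orponen--de Saxc\'e--Shmerkin formulation: you correctly isolate the hard core as a discretised sum--product/ring statement at a fixed scale, run an iterated flattening gain of definite size $\eta(\kappa)$, and flag the bookkeeping of the shrinking range of admissible scales. The paper's own sketch packages the iteration differently, working on the frequency side with the $L^r$ moments $\int_{-N}^{N}|\widehat{\mu^{\odot k}}(\eta)|^{r}\,d\eta=N^{\sigma_{k,r}}$ and obtaining a decrement $\sigma_{2k,\overline{r}}<\sigma_{k,2r}-\eps_1/2$ via Balog--Szemer\'edi--Gowers plus the sum--product theorem; your physical-space Frostman-exponent induction is an acceptable alternative packaging of the same input, and deferring the flattening step to the discretised ring theorem is no worse than what the paper itself does.

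There is, however, a genuine gap in your final step. An $L^2$ flattening bound $\|\mu^{\odot k}\ast\phi_\delta\|_2^2\lesssim\delta^{\eps_2-1}$ with $\delta=1/N$ gives, by Plancherel, only $\int_{-N}^{N}|\widehat{\mu^{\odot k}}(\eta)|^2\,d\eta\lesssim N^{1-\eps_2}$, i.e.\ decay of the Fourier transform \emph{on average} over $[-N,N]$. This does not imply the claimed pointwise bound at a single frequency $|\xi|\approx N$: a measure can satisfy $\mu(B(x,\delta))\lesssim\delta^{1-\eps}$ at scale $\delta=1/N$ and still have $|\widehat{\mu}(\xi_0)|\gtrsim1$ at some $|\xi_0|\approx N$ (smooth the uniform measure on $\{j/M:0\le j<M\}$ with $M=N^{1-\eps}$ at scale $1/N$ and take $\xi_0$ a suitable multiple of $M$). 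The missing mechanism --- exactly the one the paper uses, following Bourgain's Lemma 8.43 --- is to peel off one more multiplicative factor: write $|\widehat{\mu^{\odot(k+1)}}(\xi)|^{r}\le\int|\widehat{\mu^{\odot k}}(\eta\xi)|^{r}\,d\mu(\eta)$, replace $d\mu(\eta)$ by the mollified density $(\tau_N\ast\mu)(\eta)\,d\eta$ with $\|\tau_N\ast\mu\|_\infty\lesssim N^{1-\kappa}$, and change variables so the $\eta$-integration sweeps the frequency $\eta\xi$ over an interval of length $\approx N$ on which the averaged (moment) bound applies. The Frostman regularity of this last factor is what converts the averaged estimate into a pointwise one, and it is entirely absent from your Plancherel step; as written, your argument only yields decay outside a sparse exceptional set of frequencies (a Tsujii-type statement), not the pointwise conclusion of the theorem.
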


Theorem \ref{thm:discretised} is an improvement of the phenomenon 
$$|\widehat{\mu \odot \mu}(\xi)| \lesssim |\xi|^{\frac{1}{2} - \kappa}, \quad 1 \leq |\xi| \leq N$$ 
as long as $\mu(B(y,\rho)) \lesssim \rho^{\kappa}$ for all $y \in \R$ and $\rho \in [1/N,1]$. By iterating the multiplicative convolution,  the discretised sum-product theorem implies that $\widehat{\mu^{\odot k}}$ will have more smoothness than $\mu$ (see e.g. \eqref{eq:sumprodbound} below), which then leads to Theorem \ref{thm:discretised}. Let us now give a rough outline for the proof of Theorem \ref{thm:discretised}, where more details are given in Lemmas 8.11 and Lemma 8.43 of \cite{Bourgain2010}. 

\begin{proof}[Sketch of the proof of Theorem \ref{thm:discretised}]
By taking a suitable mollifier $\tau_N$ for $N \approx |\xi|$ where the additive convolution $\tau_N \ast \mu$ satisfies  $\|\tau_N \ast \mu\|_\infty \lesssim N^{1-\kappa}$ and an integer $r \geq 1$, we can bound 
\begin{align}\label{eq:sumprodboundinitial}|\widehat{\mu^{\odot (k+1)}}(\xi)|^r \leq \int |\widehat{\mu^{\odot k}}(\eta \xi)|^r \, d\mu(\eta) \approx \int |\widehat{\mu^{\odot k}}(\eta \xi)|^r (\tau_N \ast \mu)(\eta) \, d\eta.
\end{align}
Thus, as long as we can find integers $k,r \geq 2$ such that
\begin{align}\label{eq:sumprodbound}
\int_{-N}^N |\widehat{\mu^{\odot k}}(\eta)|^r \, d\eta < N^{\kappa/2},
\end{align}
integration by parts with respect to Lebesgue measure produces a term $|\xi|^{-1}$ in front on the right-hand side of \eqref{eq:sumprodboundinitial}, by $\|\tau_N \ast \mu\|_\infty \lesssim N^{1-\kappa}$ we get the required estimate for $|\widehat{\mu^{\odot k}}(\xi)|$ with $k$ replaced by $k+1$. We note in \cite{OSS}, that improved \cite{Bourgain2010}, is that $r$ can be made equal to $2$ in \eqref{eq:sumprodbound}, which would allow us to optimise $k$ in terms of $\kappa$.

To find $k$ and $r$ such that \eqref{eq:sumprodbound} holds one has to study the numbers $\sigma_{k,r} > 0$ defined by the expression 
$$\int_{-N}^N |\widehat{\mu^{\odot k}}(\eta)|^r \, d\eta = N^{\sigma_{k,r}}.$$
The numbers $\sigma_{k,r}$ are essentially decreasing both in $k$ and $r$, so one looks for large enough $k$ and $r$ that we can use. It turns out that the discretised sum-product theorem \cite{Bourgain2010} gives the existence of $\eps_1 > 0$ such that $\sigma_{2k,\overline{r}} < \sigma_{k,2r} - \eps_1/2$ for a suitable choice $\overline r \approx c_3 r^2$, where $c_3 > 0$ is chosen appropriately. Thus iterating this with respect to $k$ and $r$, we can find $k$ and $r$ such that $\sigma_{k,2r} < \kappa/2$ giving \eqref{eq:sumprodbound}. 

We can achieve $\sigma_{2k,\overline{r}} < \sigma_{k,2r} - \eps_1/2$ by studying measures
$$\nu_{r,k} := \frac{1}{2}[(\mu^{\odot k} \ast \mu^{\odot k}_{-})^{\ast r} + (\mu^{\odot 2k} \ast \mu^{\odot 2k}_{-})^{\ast r}],$$ 
where $\mu^{\ast r}$ is the additive self-convolution of $\mu$ and $\mu_{-}$ is the push-forward of $\mu$ under $x \mapsto -x$. Then 
$$\int_{-N}^N |\widehat{\nu}_{r,k}(\eta)|^2 \, d\eta \sim \int_{-N}^N |\widehat{\mu^{\odot k}}(\eta)|^{2r} \, d\eta = N^{\sigma_{k,2r}}.$$ 
Thus if we can find $\eps_1 > 0$ such that there is an $N^{-\eps_1}$ gain:
\begin{align}\label{eq:sumpr}\int_{-N}^N \int |\widehat{\nu}_{r,k}(\eta)|^2 |\widehat{\nu}_{r,k}(y \eta)|^2 d\nu_{r,k}^{\ast 2}(y) \, d\eta \lesssim N^{-\eps_1} \int_{-N}^N |\widehat{\nu}_{r,k}(\eta)|^2 \, d\eta \qquad (\sim N^{-\eps_1 + \sigma_{k,2r}}),\end{align}
then with some effort one can deduce that this estimate gives us $\sigma_{2k,\overline{r}} < \sigma_{k,2r} - \eps_1/2$ by using a covering argument on points $\eta \in \R$ with $|\widehat{\mu}_k(\eta)| > N^{-\kappa_0}$ for $\kappa_0 = 2 c_3^{-1} r^{-2}$.

The estimate \eqref{eq:sumpr} follows from Bourgain's discretised sum-product theorem \cite{Bourgain2010} from 2010. Set $\nu := \nu_{r,k}$. If \eqref{eq:sumpr} fails for $\nu$, then any small $\eps > 0$, we have 
$$\int_{|y| > N^{-\eps_1}}|S \cap y^{-1} S| \, d\nu^{\ast 2}(y) > N^{\sigma_{k,2r}-\eps}$$ 
for a scale $N^{-1}$ discretisation $S = \bigcup_{j = 1}^M B(z_j,N^{-1-\eps})$ of the support of $\nu^{\ast 2}$ for some $1/N$ separated points $z_j$ in the support of $\nu^{\ast 2}$. Thus we can find many $1/N$ separated points $y_k$ from the support of $\nu$ such that $|z_{j_1} - y_k z_{j_k}|$ is large for many pairs $(z_{j_1},z_{j_2})$. This can then be turned into a lower bound on the number of quadruples $(\log |z_{j_1}|,\log |z_{j_2}|,\log |z_{j_3}|,\log |z_{j_4}|) \in (\log |S|)^4$ for which 
$$\log |z_{j_1}|+\log |z_{j_2}| = \log |z_{j_3}| + \log |z_{j_4}|.$$ In other words, $\log |S|$ has a large \textit{additive energy}, so by the Balog-Szemer\'edi-Gowers theorem (see e.g. \cite{Gowers}) there exists a large subset $\cF_1$ of $S$ such that $\cF_1 \cdot \cF_1$ can be covered by at most $N^{\eps_3}|\cF_1|$ intervals of size $1/N$ for a suitable $\eps_3> 0$. On the other hand, as $S$ comes from the support of the additive self-convolution $\nu^{\ast 2}$, by summing a mollifier of $\nu^{\ast 2}$ over points in $\cF_1 \subset S$, Cauchy-Schwarz implies that $\cF_1$ also has large additive energy, so again by Balog-Szemer\'edi-Gowers theorem we can find a further large subset $\cF_2 \subset \cF_1$ such that $\cF_2 + \cF_2$ can be covered by at most $N^{\eps_4}|\cF_2|$ intervals of size $1/N$ for a suitable $\eps_4 > 0$. As the sets $\cF_2 \subset \cF_1$ are large subsets of $S$, this is impossible by the discretised sum-product theorem \cite{Bourgain2010}.
\end{proof}

In \cite{BD1}, Bourgain and Dyatlov applied Theorem \ref{thm:discretised} to the study of Fourier transforms of stationary measures on \textit{limit sets} $\Lambda_\Gamma$ of convex and co-compact Fuchsian groups $\Gamma$, which like the Gauss map have a non-linear structure as they can be described using contractions given by fractional linear transformations acting on the unit circle. Here Bourgain and Dyatlov studied the \textit{Patterson-Sullivan measure} on the limit set $\Lambda_\Gamma$, which is a natural stationary measure associated to a subshift of a non-linear IFS arising from the group $\Gamma$.

\begin{theorem}\label{thm:bourgaindyatlov}
If $\mu_\Gamma$ is the Patterson-Sullivan measure on the limit set $\Lambda_\Gamma$ associated to a convex co-compact Fuchsian group $\Gamma \subset \mathrm{PSL}(2,\R)$, then there exists $\alpha = \alpha(\Hd \Lambda_\Gamma) > 0$ such that 
$$|\widehat{\mu}(\xi)| = O(|\xi|^{-\alpha})$$ 
as $|\xi| \to \infty$.
\end{theorem}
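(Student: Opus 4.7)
The plan is to combine the conformal self-similarity of $\mu_\Gamma$ under the M\"obius action of $\Gamma$ with the discretised sum-product input of Theorem \ref{thm:discretised}. Using a Bowen--Series type coding, I would first write $\Lambda_\Gamma$ as the attractor of a finite IFS $\Phi = \{g_j : j \in \cA\}$ of M\"obius transformations, and identify $\mu_\Gamma$ with the equilibrium state for the geometric potential $-\delta \log|g_j'|$, where $\delta = \Hd \Lambda_\Gamma$. Convex co-compactness delivers uniform bounded distortion on $\Phi$ and the Frostman bound $\mu_\Gamma(B(y,r)) \lesssim r^\delta$. Iterating the $\delta$-conformal relation $\mu_\Gamma = \sum_j (g_j)_\ast(|g_j'|^\delta \mu_\Gamma)$ gives, for any $n \geq 1$,
\begin{align}
\widehat{\mu_\Gamma}(\xi) = \sum_{\a \in \cA^n} \int e^{-2\pi i \xi g_\a(x)} |g_\a'(x)|^\delta \, d\mu_\Gamma(x), \notag
\end{align}
which I would use with $n \asymp \log|\xi|$.

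The heart of the argument exploits the \emph{multiplicative} structure of M\"obius increments: if $g_\a(x) = (Ax+B)/(Cx+D)$ with $AD-BC = 1$, then
\begin{align}
g_\a(x) - g_\a(y) = (x-y)\sqrt{g_\a'(x)\,g_\a'(y)}. \notag
\end{align}
I would partition the words $\a \in \cA^n$ by the dyadic scale $e^{-t}$ of $|g_\a'|$ and, on each piece $g_\a(\Lambda_\Gamma)$, linearise $g_\a$ around a reference point $x_\a \in \Lambda_\Gamma$. Applying Cauchy--Schwarz $k$ times, in the spirit of \eqref{eq:sumprodboundinitial}, converts $|\widehat{\mu_\Gamma}(\xi)|^{2k}$ (up to an admissible linearisation error) into an integral controlled by $|\widehat{\nu^{\odot k}}(c\xi)|$, where $\nu$ is the pushforward on $[1/2,1]$ of the weighted empirical distribution $\sum_\a |g_\a'(x_\a)|^\delta \delta_{e^{t} g_\a'(x_\a)}$ on the rescaled derivatives.

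Next I would verify the Frostman bound $\nu(B(y,\rho)) \lesssim \rho^\kappa$ for some $\kappa = \kappa(\delta) > 0$ on the scale range $[|\xi|^{-1},|\xi|^{-\eps_1}]$ required by Theorem \ref{thm:discretised}. Strict convexity and analyticity of the topological pressure of the geometric potential, coupled with the Frostman property of $\mu_\Gamma$ and the uniform bounded distortion, give level-set estimates for the Birkhoff sum $-\log|g_\a'(x_\a)|$ of the form needed to transfer to a Frostman exponent on $\nu$. Applying Theorem \ref{thm:discretised} with any $k > 1/\kappa$ then yields $|\widehat{\nu^{\odot k}}(c\xi)| \lesssim |\xi|^{-\eps_2}$, and unwinding the Cauchy--Schwarz produces $|\widehat{\mu_\Gamma}(\xi)| \lesssim |\xi|^{-\alpha}$ with $\alpha = \eps_2/(2k)$.

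The main obstacle I anticipate is calibrating the three parameters --- the linearisation scale $e^{-t}$, the word length $n$, and the multiplicity $k$ --- so that the quadratic linearisation error from the M\"obius maps, the loss from iterated Cauchy--Schwarz, and the Frostman exponent of $\nu$ simultaneously meet the hypotheses of Theorem \ref{thm:discretised}. The algebraic identity $g_\a(x)-g_\a(y) = (x-y)\sqrt{g_\a'(x)\,g_\a'(y)}$ is the essential gadget that funnels the inherently additive Fourier problem into the multiplicative sum-product regime; without this $\mathrm{PSL}(2,\R)$-specific structure the reduction to Theorem \ref{thm:discretised} would be substantially more delicate.
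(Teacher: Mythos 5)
Your overall architecture --- Schottky/Bowen--Series coding, iterating the conformality relation down to words at a fixed derivative scale, renormalised derivatives, and a reduction to Theorem \ref{thm:discretised} --- is the same as the paper's, and the $\mathrm{SL}_2$ identity $g_\a(x)-g_\a(y)=(x-y)\sqrt{g_\a'(x)g_\a'(y)}$ is indeed the correct device for converting additive phase differences into products of derivatives. However, the mechanism you give for producing the $k$-fold multiplicative convolution does not work as stated. One application of Cauchy--Schwarz plus your identity yields a phase $\xi(x-y)\sqrt{g_\a'(x)g_\a'(y)}$, i.e.\ the Fourier transform of the law of a \emph{single} renormalised derivative at frequency $\xi(x-y)$; iterating Cauchy--Schwarz only multiplies the number of $(x,y)$ variables, it never creates $k$ independent derivative factors. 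The product structure in \eqref{eq:FourierBoundFuchsian} comes instead from decomposing each word into $k$ and $k+1$ alternating blocks and using the chain rule together with bounded distortion to decouple the evaluation points, so that the blocks $\b_1,\dots,\b_k$ range independently and the exponential sum becomes $\widehat{\nu_1\odot\cdots\odot\nu_k}(\eta)$. That block decomposition is the step that manufactures the hypothesis of Theorem \ref{thm:discretised}, and it is missing from your outline.

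The more serious gap is your verification of the Frostman condition for $\nu$. Convexity and analyticity of the pressure (equivalently, large deviations for the Birkhoff sums $-\log|g_\a'|$) control how the derivatives distribute \emph{across} exponential scales; but after your stopping-time partition every word already satisfies $|g_\a'|\approx e^{-t}$, and what Theorem \ref{thm:discretised} requires is non-concentration of the renormalised derivatives \emph{inside} the fixed interval, at scales $\rho$ ranging down to $1/N\approx|\xi|^{-1}$. Pressure regularity is blind to this fine-scale distribution: a linear Schottky-like (self-similar) system has exactly the same thermodynamic regularity, yet its renormalised derivatives take only boundedly many values and concentrate completely, and such systems need not have any Fourier decay. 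The actual non-concentration argument is group-specific: one writes the renormalised derivative of $\gamma_{\a\b}$ as a bi-Lipschitz function of the point $\gamma_{\a\b}^{-1}(\infty)=\gamma_{\overline{\a\b}}(\infty)$, observes that this point lies in the cylinder $\gamma_{\overline{\b}}[0,1]$ of length $\approx\tau$, and uses the disjointness of these cylinders together with Ahlfors--David regularity to bound the number of points falling in any ball $B(y,\rho)$. This is precisely where the non-linearity of the M\"obius action enters; without it your scheme would equally ``prove'' polynomial decay for linear systems, which is false.
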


The motivation of Theorem \ref{thm:bourgaindyatlov} was to study \textit{Fractal Uncertainty Principles} for the limit set $\Lambda_\Gamma$ when $\Hd \Lambda_\Gamma \leq \frac{1}{2}$, which we recall here. Two sets $X,Y \subset \R$ are said to satisfy the \textit{Fractal Uncertainty Principle} (FUP) with exponent $\beta > 0$ if for all $0 < h \ll 1$, whenever $f \in L^2(\R^d)$ satisfies $\supp \widehat{f} \subset \{y/h : \mathrm{dist}(y,Y) \leq h\}$, then 
$$\int_{X(h)} |f(x)|^2 \, dx \lesssim h^{2\beta},$$
where $dx$ is the Lebesgue measure on $\R$. In particular Bourgain and Dyatlov \cite{BD1} in 2017 connected the Fourier decay rate exponent of the Patterson-Sulllivan measure to the FUP exponent $\beta > 0$ applied to $X = Y = \Lambda_\Gamma$. Adapting Dolgopyat's method \cite{Dolgopyat} from 1998, Dyatlov and Jin \cite{DJ2} in 2017 proved a bound $\beta = \frac{1}{2} - \Hd \Lambda_\Gamma + \alpha$, for some $\alpha > 0$, that depended on the Ahlfors-David regularity constants of the limit set $\Lambda_\Gamma$. Bourgain and Dyatlov \cite{BD1} in 2017 were able to exploit the nonlinearity of the IFS in a way that $\alpha$ only depended on $\Hd \Lambda_\Gamma$. This, combined with an earlier work of Dyatlov and Zahl \cite{DyatlovZahl} from 2016 led to a bound for the \textit{essential spectral gap} of the Laplacian operator on the corresponding non-compact hyperbolic surface $\Gamma \setminus \mathbb{H}$ that only depended on the Hausdorff dimension of the limit set.

Let us now sketch how Theorem \ref{thm:discretised} appears in proving Theorem \ref{thm:bourgaindyatlov}. 

\begin{proof}[Sketch of the proof of Theorem \ref{thm:bourgaindyatlov}] When $\Gamma$ is a convex co-compact subgroup, it has an associated \textit{Schottky structure} \cite[Section 15.1]{Bor}, which allows us to code the limit set $\Lambda_\Gamma$ using matrices $\gamma_\a \in \mathrm{SL}(2,\R)$, which we can identify as M\"obius transformations acting on the hyperbolic plane $\mathbb{H}$ and contracting real analytic maps on the boundary of $\mathbb{H}$. Here $\a \in \cW$, where $\cW \subset \bigcup_{n \in \N} \cA^n$ is the set of allowed finite words from some index set $\cA$ that satisfy rules defined by the Schottky structure. This also allows us to realise the Patterson-Sullivan measure $\mu_\Gamma$ as a Gibbs measure associated to a subshift of an IFS.

Instead of now trying to iterate transfer operator up to words of length $n$ as in the case of Gauss map (recall \eqref{eq:FourierBound}), Bourgain and Dyatlov \cite{BD1} instead apply \textit{blocks} of length $k$ and $k+1$ formed of words of lengths $n$. The $k$ is precisely the $k$ that will arise from Theorem \ref{thm:discretised}. Now, fix a suitable $\eps_1 > 0$ such that $\tau \approx |\xi|^{-\eps_1}$ and words $\cW(\tau) \subset \cW$ such that any $\a \in \cW(\tau)$ satisfy that the lengths $|\gamma_\a[0,1]| \approx \tau$. Then, as $\mu_\Gamma$ is Ahlfors-David regular, Cauchy-Schwarz inequality (compare to \eqref{eq:FourierBoundStop} in the self-similar case), mean value theorem and the fact that the contractions $\gamma_\a$ exhibit bounded distortions, there exists suitable constants $\eps_2 > 0$ and $C > 0$ such that

\begin{align} \label{eq:FourierBoundFuchsian} |\widehat{\mu}_\Gamma(\xi)|^2 \lesssim \tau^{k\delta} \max_{\A =\a_1 \dots \a_{k+1}} \sup_{\tau^{-1/4} \leq |\eta| \leq C \tau^{-1/2}} \Big| \sum_{\b_1 \dots \b_{k} \in \cB(\A)} e^{-2\pi i \eta \prod_{j = 1}^k \tau^{-2} \gamma'_{\a_{j-1}'\b_j'}(x_{\a_j})}\Big| + \tau^{\eps_2},
\end{align}
where $\cB(\A)$ are the collection of blocks of words in $\cW(\tau)$ that are allowed in the Schottky structure depending $\A$, words $\a = a_1\dots a_n$ and we use the notations $\a' = a_1\dots a_{n-1}$ and $x_\a$ as the center point of $\gamma_\a[0,1]$. Now we see that the right hand-side of \eqref{eq:FourierBoundFuchsian} is a multiplicative convolution of the discrete measures 
$$\nu_j := \sharp \cZ_j^{-1}\sum_{\b \in \cZ_j} \delta_{\tau^{-2} \gamma_{\a_j' \b'}' (x_\a)}$$ 
for some finite sets $\cZ_j$. Thus by adapting Theorem \ref{thm:discretised} to multiplicative convolutions of several $\nu_j$, having polynomial Fourier decay bound reduces by \eqref{eq:FourierBoundFuchsian} to proving the Frostman condition required by Theorem \ref{thm:discretised} for each $\nu_j$. This means that we need to prove that the normalised derivatives $\tau^{-2} \gamma_{\a_j' \b'}' (x_\a)$, $\b \in \cW(\tau)$, do not concentrate near a point.

Let us now argue the non-concentration of the numbers $\tau^{-2} \gamma_{\a_j' \b'}' (x_\a)$, $\b \in \cW(\tau)$. Fixing $b \in \cA$, we write $I_b = [x_0,x_1]$. Then we can write $\gamma_\a(x) = S_b \gamma_{\alpha_{\a,b}}T_{\a,b}^{-1}$, where $T_{\a,b}$ and $S_b$ are affine maps such that $T_{\a,b}([0,1]) = \gamma_\a(I_b)$ and $S_b([0,1]) = I_b$ and
$$\gamma_{\alpha_{\a,b}} = \begin{pmatrix} e^{\alpha_\a/2} & 0 \\ e^{\alpha_{\a,b}/2} - e^{-\alpha_{\a,b}/2} & e^{-\alpha_{\a,b}/2}\end{pmatrix}, \quad \text{where}\quad \alpha_{\a,b} = \log \frac{\gamma_\a^{-1}(\infty) - x_1}{\gamma_\a^{-1}(\infty) - x_0}$$
is the \textit{distortion} of $\gamma_\a$ in $I_b$. Thus 
$$\gamma_\a'(x) = \gamma_{\alpha_{\a,b}}'(T_{\a,b}^{-1}(x)) \cdot \frac{|\gamma_\a(I_b)|}{|I_b|}$$ 
so using bounded distortions we can reduce the non-concentration properties of $\tau^{-2} \gamma_{\a_j' \b'}' (x_\a)$ to non-concentrations of $\gamma_{\a_j' \b'}^{-1}(\infty)$. As with the Gauss map, if we consider the \textit{reverse} word $\overline{\a}$ of $\a$, the lengths 
$$|\gamma_\a[0,1]| \approx |\gamma_{\overline{\a}}[0,1]|.$$ 
The Schottky structure was defined so that $\gamma_{\a_j' \b'}^{-1}(\infty) = \gamma_{\overline{\a_j' \b'}}(\infty) \in \gamma_{\overline{\b'}}[0,1]$, which has a length approximately the same as $|\gamma_{\b'}[0,1]| \approx \tau$. Given $\a_j$, $y \in \R$ and $\rho > \tau$, we can thus cover $B(y,\rho) \cap \{\gamma_{\a_j' \b'}^{-1}(\infty) : \b \in \cW(\tau)\}$, by the disjoint intervals $\gamma_{\b'}[0,1]$ that intersect $B(y,\rho)$. This implies an upper bound for the number of $\gamma_{\a_j' \b'}^{-1}(\infty)$, $\b \in \cW(\tau)$, with $\gamma_{\a_j' \b'}^{-1}(\infty) \in B(y,\rho)$.
\end{proof}

\subsection{Renewal theoretic approach and random walks on groups} 

Briefly after the work of Bourgain and Dyatlov came out, Li \cite{Li} in 2018 introduced a novel approach to the Fourier decay problem based on quantitative \textit{renewal theorems} for random walks on Lie groups. In this work Li considered the stationary measures and related the Fourier coefficients to renewal operators, which, using the non-linearity properties of the action of the maps, gave Fourier decay. Li's approach also applies in the case of Patterson-Sullivan measures considered by Bourgain and Dyatlov, but also opens the door for other proofs such as the self-similar case (which we discussed in the earlier section), and an approach to general non-linear self-conformal measures and self-affine measures, which we will describe in the next sections.

In \cite{Li} Li considered a probability distribution $\nu$ of matrices from $\mathrm{SL}(2,\R)$ that has \textit{finite exponential moment} $\int \|g\|^{\eps_1} \, d\nu(g) < \infty$ for some constant $\eps_1 > 0$. Then a key non-concentration assumption one needs for the subgroup $\Gamma_\nu$ generated by the support of $\nu$ is \textit{Zariski density} of $\Gamma_\nu$ in $\mathrm{SL}(2,\R)$. This roughly speaking means that $\nu$ does not concentrate mass on any proper algebraic subgroup of $\mathrm{SL}(2,\R)$. Now the random walk generated by $\nu$ acts on the projective space $\mathbb{P}^1$ by M\"obius transformations. By Zariski density of $\Gamma_\nu$, there exists a unique $\nu$-stationary measure $\mu$ on $\mathbb{P}^1$, that is, satisfying $\mu = \nu \ast \mu$, known as the \textit{Furstenberg measure} \cite{BQ}.

In \cite{Li} Li proved under these assumptions that the Furstenberg measure $\mu$ is always Rajchman adapting an approach that did not rely on the sum-product phenomenon.

\begin{theorem}\label{thm:li}
Let $\nu$ be a Borel probability measure on $\mathrm{SL}(2,\R)$ with a finite exponential moment such that the subgroup $\Gamma_\nu$ generated by the support of $\nu$ is Zariski dense. Then the $\nu$-stationary measure $\mu$ on $\mathbb{P}^1$ is Rajchman and there exists $\alpha > 0$ such that 
$$|\widehat{\mu}(\xi)| \lesssim |\xi|^{-\alpha}$$
as $|\xi| \to \infty$.
\end{theorem}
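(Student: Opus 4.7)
The plan is to transport the renewal-theoretic scheme used for Theorem \ref{thm:lisahlsten} from the scalar random walk $\sum X_k$ to the norm cocycle $S_n(\omega) := \log\|G_n(\omega)\|$, $G_n := g_1 \cdots g_n$ with $g_i$ i.i.d.\ of law $\nu$, while exploiting the non-linearity of the Möbius action of $\mathrm{SL}(2,\R)$ on $\mathbb{P}^1$ in place of the scalar dilations of the self-similar case. Fix $\xi$ with $|\xi|$ large, set $t := \tfrac{1}{2}\log|\xi|$, and iterate the stationarity $\mu = \nu \ast \mu$ to get, for every $n$,
$$\widehat{\mu}(\xi) = \int\!\!\int e^{-2\pi i \xi\,(g \cdot x)}\, d\nu^{*n}(g)\, d\mu(x).$$
The Furstenberg--Kesten theorem gives $S_n/n \to \lambda_1(\nu) > 0$ a.s., so I replace $n$ by the stopping time $n_t(\omega) := \min\{n : S_n(\omega) \geq t\}$, matched to the frequency scale, much as in the proof of Theorem \ref{thm:lisahlsten}.

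Next I would exploit the Cartan decomposition $G_{n_t} = K_t A(S_{n_t}) L_t$ with $K_t, L_t \in \mathrm{SO}(2)$ and $A(s) = \mathrm{diag}(e^s, e^{-s})$. In affine coordinates on $\mathbb{P}^1$, and for $L_t x$ away from the repelling direction of $G_{n_t}$, a direct computation gives
$$G_{n_t} \cdot x = b^+(G_{n_t}) + e^{-2 S_{n_t}}\, \varphi(L_t x, K_t) + O(e^{-4 S_{n_t}}),$$
where $b^+(g) \in \mathbb{P}^1$ is the attracting fixed point of $g$ and $\varphi$ is smooth in its arguments. Since $\xi\, e^{-2 S_{n_t}} = \sgn(\xi)\, e^{-2 U_t}$ with $U_t := S_{n_t} - t$ the overshoot, the phase splits into an unbounded piece $2\pi \xi\, b^+(G_{n_t})$ and a bounded non-linear piece $2\pi e^{-2 U_t} \varphi(L_t x, K_t)$. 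Using Zariski density (via Benoist--Quint non-atomicity of $\mu$) to discard the repelling neighbourhood at polynomially small cost yields
$$\widehat{\mu}(\xi) \approx \E_\omega\!\left[ e^{-2\pi i \xi\, b^+(G_{n_t})} \int e^{-2\pi i\, e^{-2U_t} \varphi(L_t x, K_t)}\, d\mu(x) \right].$$

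The remaining task is a quantitative renewal theorem extracting polynomial decay from this expectation. Zariski density makes the cocycle $\sigma(g, x) := \log(\|g v_x\|/\|v_x\|)$ non-arithmetic in the sense of Guivarc'h, which yields a uniform spectral gap for the twisted transfer operators $\mathcal{L}_{i\tau} f(x) := \E[e^{i\tau \sigma(g,x)}\, f(g\cdot x)]$ acting on Hölder functions on $\mathbb{P}^1$; this in turn gives a Kesten-type renewal theorem with polynomial error in $t$, showing that the joint law of $(U_t, K_t, L_t, b^+(G_{n_t}))$ converges at polynomial rate to an absolutely continuous product. The expectation above then factors asymptotically as $\widehat{\mu}(\xi) \cdot C + O(|\xi|^{-\alpha})$ for a constant $C = C(\nu)$ of modulus strictly less than $1$ (strictness comes from the non-linearity of $\varphi$ in $x$, making the inner oscillatory integral strictly contractive in $L^\infty$), and solving the resulting self-improving inequality gives $|\widehat{\mu}(\xi)| \lesssim |\xi|^{-\alpha}$. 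The main obstacle is exactly this uniform spectral gap for $\mathcal{L}_{i\tau}$: classical Guivarc'h--Raugi renewal is only qualitative and would yield at best polylogarithmic decay, while polynomial decay demands a Dolgopyat-style cancellation estimate with effective dependence on $\tau$. Setting up Dolgopyat's argument requires the non-degeneracy of $\sigma$ on the Cartan $\mathrm{SO}(2)$-factor (ensured by Zariski density, which prevents $\sigma$ from being cohomologous to a locally constant cocycle) together with the finite exponential moment of $\nu$, used to control the resolvent $(\mathcal{L}_{i\tau} - z)^{-1}$ uniformly in a strip around the unit circle. Everything else (Lyapunov regularity via large deviations, the Cartan expansion, and non-concentration of $\mu$ on the repelling flag) is comparatively standard once this spectral gap is in hand.
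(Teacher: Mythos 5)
Your overall architecture for the Rajchman part --- iterate stationarity, stop the walk when $\log\|G_n\|\geq t$ with $t\asymp\log|\xi|$, use the Cartan decomposition to isolate the attracting point plus an $e^{-2S_{n_t}}$-size non-linear correction, and feed the residue process into a Kesten-type renewal theorem --- is essentially the route of \cite{Li}, and for the qualitative statement $\widehat{\mu}(\xi)\to 0$ you are on track. (Note though that \cite{Li} runs the argument on $|\widehat{\mu}(\xi)|^2$ after Cauchy--Schwarz, which avoids having to decouple $e^{-2\pi i\xi b^+(G_{n_t})}$ from the overshoot and the Cartan angles; your ``self-improving inequality'' $\widehat{\mu}(\xi)=C\,\widehat{\mu}(\xi)+O(|\xi|^{-\alpha})$ with $|C|<1$ is not justified as written, since the inner phase $e^{-2U_t}\varphi$ is bounded and can be arbitrarily close to $0$ on an event of positive probability, and the joint law of $(b^+(G_{n_t}),U_t,K_t,L_t)$ does not factor against $\mu$ at frequency scale $|\xi|$ without substantial extra work.)

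The genuine gap is in the source of the polynomial rate. You assert that Zariski density plus the exponential moment let one run Dolgopyat's method directly on the twisted operators $\mathcal{L}_{i\tau}$ and obtain a spectral gap uniform in $\tau$, hence an exponential error term in the renewal theorem. This is exactly the step that does not go through: besides non-arithmeticity of the cocycle, the classical Dolgopyat cancellation argument needs a doubling/Federer or Ahlfors--David regularity property of the reference measure (or a separation/Markov structure for the action on $\mathbb{P}^1$), and the Furstenberg measure of a general Zariski-dense $\nu$ has neither --- the induced ``IFS'' on $\mathbb{P}^1$ has arbitrary overlaps and $\mu$ is exact-dimensional but not Ahlfors regular. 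This is precisely why the polynomial rate in Theorem \ref{thm:li} was only obtained in the later paper \cite{Li2}: there the logic runs in the opposite direction --- one first proves polynomial Fourier decay by adapting the Bourgain--Dyatlov sum-product mechanism (Theorem \ref{thm:discretised}) together with a non-concentration estimate for the relevant derivative data, and only then deduces the uniform spectral gap and the exponential-rate renewal theorem from that decay. Without the sum-product input your scheme delivers at best the qualitative renewal theorem of \cite{Li}, i.e.\ the Rajchman property, not $|\widehat{\mu}(\xi)|\lesssim|\xi|^{-\alpha}$. Note also that a renewal theorem with only polynomial error in $t\asymp\log|\xi|$, which is what you state your spectral gap ``in turn gives'', would by itself yield only polylogarithmic decay in $|\xi|$.
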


We remark that in \cite{Li} only the Rajchman property was proved and the polynomial rate of decay was established in a later paper \cite{Li2} published in 2018 by Li, where the sum-product bound was adapted to prove in Theorem \ref{thm:li} the rate of Fourier decay is polynomial. The idea of the proof of Theorem \ref{thm:li} is to use Cauchy-Schwarz and a stopping time to relate $|\widehat{\mu}(\xi)|^2$ to a renewal operator associated to a stopping time $n_t$ (a residue process), which is the smallest $n \in \N$ such that $\log \|X_n\| \geq t$, where $X_n = g_1\dots g_n$ is a random walk, where $g_j$ are i.i.d. and distributed according to $\nu$. The idea was adapted in the self-similar case by Li and the author \cite{JialunSahlsten1} in 2019, recall  \eqref{eq:FourierBoundStop}, where the idea is easier to follow. Then a renewal theorem of Kesten \cite{Kesten1974} from 1974 can be applied to give some limit distribution for the residue process, but to get Fourier decay, uniform rate of convergence is needed, which Li establishes in \cite{Li}.

The polynomial rate of decay in Theorem \ref{thm:li} leads also to a uniform spectral gap bound to transfer operators associated to the random walk $X_n = g_1\dots g_n$ on $\mathrm{SL}(2,\R)$ generated by $\nu$ and an \textit{exponential} error term rate in the renewal theorem that leads to polynomial Fourier decay. Li's work \cite{Li2} also applies to many examples coming from higher dimensions, which we will discuss in a later section. 

If one removes the exponential moment condition, and only uses a finite second moment for $\nu$, that is, $\int \log^2 \|g\|\, d\nu(g) < \infty$, it is still possible to establish Fourier decay for the $\nu$-stationary measure $\mu$, and a log-log-rate if this moment condition is strengthened slightly. This was proved by Dinh,  Kaufmann and Wu \cite{DKW} from 2023. 

\begin{theorem}\label{thm:DihnKaufmannWu}
Let $\nu$ be a Borel probability measure on $\mathrm{SL}(2,\R)$ with a finite second moment such that the subgroup $\Gamma_\nu$ generated by the support of $\mu$ is Zariski dense. Then the unique $\mu$-stationary measure $\nu$ on $\mathbb{P}^1$ is Rajchman. Moreover, if there exists $\eps > 0$ such that $\int \log^{2+\eps} \|g\|\, dg < \infty$, then 
$$|\widehat{\mu}(\xi)| \lesssim \frac{1}{\log \log |\xi|}$$ 
as $|\xi| \to \infty$.
\end{theorem}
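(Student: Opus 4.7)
The plan is to adapt Li's renewal theoretic argument that proved Theorem \ref{thm:li} to the much weaker moment assumption, replacing the renewal theorem with exponential-moment rate by a renewal theorem that works under only a finite second moment (for the qualitative Rajchman statement) and under a finite $(2+\eps)$th moment (for the quantitative log-log rate). Concretely, iterating the stationarity $\mu = \nu^{\ast n} \ast \mu$ and applying Cauchy-Schwarz in the manner of the sketch of Theorem \ref{thm:lisahlsten}, one reduces the bound on $|\widehat{\mu}(\xi)|^2$ to controlling
\begin{align*}
\iint \Bigl| \E \, e^{-2\pi i \xi (X_n \cdot x - X_n \cdot y)} \Bigr| \, d\mu(x)\,d\mu(y),
\end{align*}
where $X_n = g_n \cdots g_1$ with $g_j$ i.i.d.\ $\sim \nu$ and $X_n \cdot x$ denotes the M\"obius action on $\mathbb{P}^1$. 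Linearising this action near a typical point produces $X_n \cdot x - X_n \cdot y \approx e^{-2\sigma(X_n, v)}(x-y)$, where $\sigma(g, v) := \log \|gv\|$ is the Iwasawa cocycle, and writing $\xi = s e^{2t}$ together with the stopping time $n_t := \min\{n : \sigma(X_n, v) \geq t\}$ reduces the problem to bounding the renewal operator
\begin{align*}
R_t g_{s,x,y}(v) := \E\bigl[ g_{s,x,y}\bigl(\sigma(X_{n_t}, v) - t\bigr)\bigr], \qquad g_{s,x,y}(z) := e^{-2\pi i s(x-y) e^{-2z}}.
\end{align*}

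The next step is to apply a renewal theorem to the Markov random walk $\sigma(X_n, v)$. By Zariski density of $\Gamma_\nu$ and the Furstenberg--Guivarc'h theorem the Lyapunov exponent $\lambda := \lim n^{-1}\E\log\|X_n\|$ is positive and simple, so under only a finite second moment a Kesten type renewal theorem (suitably adapted to the Markov cocycle setting and uniform in the starting point $v$) yields convergence of the residue $\sigma(X_{n_t}, v)-t$ in distribution to an absolutely continuous probability measure $\rho$ on $[0,\infty)$ as $t \to \infty$. Applied to the renewal operator above this gives $R_t g_{s,x,y}(v) \to \int g_{s,x,y}(z)\,d\rho(z)$, and since the right-hand side is, up to the diffeomorphic change of variable $z \mapsto e^{-2z}$, the Fourier transform of a pushforward of $\rho$ at frequency proportional to $s(x-y)$, Riemann--Lebesgue produces decay as $|s(x-y)| \to \infty$. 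A non-atomic H\"older estimate for $\mu$ handles pairs with $|x-y|$ small, and a diagonal tuning of $s, t$ in terms of $\xi$ yields the qualitative Rajchman statement.

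For the quantitative bound under $\int \log^{2+\eps}\|g\|\,d\nu < \infty$, I would upgrade the limit $R_t g_{s,x,y}(v) \to \int g_{s,x,y}\,d\rho$ to a quantitative convergence with rate. Renewal theorems for random walks without an exponential tail but with a $(2+\eps)$th moment produce only a power of $\log t$ error in the residual distribution. Transporting this through the Markov cocycle, substituting $t \sim \log|\xi|$ and balancing against the Frostman tail $\mu \times \mu(\{|x-y| \leq \delta\}) \lesssim \delta^{\eps_1}$ gives the claimed $|\widehat{\mu}(\xi)| \lesssim (\log\log|\xi|)^{-1}$ after optimising the splitting.

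The hardest step will be the renewal theorem itself under only a finite second or $(2+\eps)$th moment, uniformly in the starting point $v \in \mathbb{P}^1$ and with enough regularity to test against the oscillatory $g_{s,x,y}$. Li's proof of Theorem \ref{thm:li} used the Nagaev--Guivarc'h spectral method for a complexified transfer operator acting on H\"older functions, which requires an exponential moment to analytically continue the operator family across the imaginary axis. Removing this assumption forces one to truncate the random walk at a slowly growing cutoff, replace the spectral contraction argument by a direct Berry--Esseen estimate for the truncated Markov random walk, and control the truncation error through the tails of $\nu$ via the moment hypothesis; the delicate balance between truncation level, integrability and Berry--Esseen rate is precisely what produces the $(\log\log|\xi|)^{-1}$ exponent and forms the technical core of the Dinh--Kaufmann--Wu argument.
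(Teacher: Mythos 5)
This is a survey: the paper does not prove Theorem \ref{thm:DihnKaufmannWu} but cites it from Dinh--Kaufmann--Wu \cite{DKW}, adding only the remark that the Benoist--Quint large deviation estimates used in Li's proof are unavailable under a finite second moment. Your outline is the natural one (adapt Li's renewal-theoretic scheme, weaken the renewal theorem), but it contains a concrete gap precisely at the point the paper's remark flags. You invoke ``a non-atomic H\"older estimate for $\mu$'' and a polynomial Frostman tail $\mu\times\mu(\{|x-y|\leq\delta\})\lesssim\delta^{\eps_1}$ to dispose of near-diagonal pairs. Under a finite exponential moment this is Guivarc'h's H\"older regularity of the Furstenberg measure, but under only a finite second (or $(2+\eps)$-th logarithmic) moment no such polynomial regularity is known or expected; the available regularity is merely logarithmic, of the form $\mu(B(x,r))\lesssim(\log(1/r))^{-a}$. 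With only this, the splitting parameter $\delta$ cannot be taken polynomially small in $|\xi|$ without destroying the error term, and the balance you describe in the last paragraph changes qualitatively --- indeed the weak regularity of $\mu$, not just the rate in the renewal theorem, is a source of the final $(\log\log|\xi|)^{-1}$ exponent.

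A second, related soft spot: your plan still relies on localising the stopping time $n_t$ near $t/\lambda$ and on uniform (in the starting direction $v$) control of the residue process. In Li's argument both rest on the Benoist--Quint exponential large deviation estimates for the cocycle $\sigma(X_n,v)$, which fail here; replacing them by Chebyshev-type bounds from the second moment is possible but degrades every quantitative step, and your sketch does not track this degradation. So the proposal is a reasonable roadmap in spirit, but as written it silently imports two inputs (polynomial regularity of $\mu$ and exponential large deviations for the cocycle) that are exactly what the weaker moment hypothesis removes; repairing these is the substance of the Dinh--Kaufmann--Wu paper rather than a routine adaptation of \cite{Li}.
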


The work of Dinh, Kaufmann and Wu has to overcome crucial issues that arise when only low regularity is imposed, in particular, the large deviation estimates by Benoist and Quint \cite{BQ16} from 2016 that are needed in the proof of Li, do not hold.

The methods developed in the special cases of Gauss map, Fuchsian groups and the stationary measures for random walks on groups raise the question of whether one could have a classification that holds for \textit{general} self-conformal iterated function systems. This is the focus of next section.

\subsection{General non-linear iterated function systems} 

All the iterated function systems $\Phi$ that arise from the Gauss map or Fuchsian groups share the property that the branches $f_j$ are \textit{non-linear}, that is, $f_j$ are not similitudes. As many natural stationary measures for these systems have polynomial Fourier decay, while some self-similar systems do not, would there be a way to prove that the existence of nonlinearity in the IFS always implies rapid decay of the Fourier transforms? A prototype question in this direction would be:

\begin{problem}\label{prob:nonlinear}
Suppose $\Phi = \{f_j : j \in \cA\}$ is a $C^{1+\alpha}$ IFS such that one of the maps $f_j$ is not a similitude. Does the Fourier transform of every non-atomic self-conformal measure associated to $\Phi$ and probability vector $p_j \in (0,1)$, $j \in \cA$, decay polynomially?
\end{problem}

Over the past years, there has been some substantial progress towards Problem \ref{prob:nonlinear} in the literature. Problem \ref{prob:nonlinear} has also some connections in bounding oscillatory integrals with H\"older phases in the work of Leclerc \cite{LeclercOsc} from 2022, prevalence of normal numbers and Diophantine properties of numbers in the supports of $K_\Phi$ by the Davenport-Erd\"os-LeVeque criterion \cite{DEL} and \cite{PVZZ}, Fractal Uncertainty Principles in quantum chaos (see the Appendix of the work of Baker and the author \cite{BakerSahlsten} from 2023), and the study of its connections to applications in hyperbolic dynamics such as the Newhouse phenomenon in an upcoming work of Avila, Lyubich and Zhang \cite{ALZ}.

Now to attack Problem \ref{prob:nonlinear}, it would be tempting to try to adapt one of the thermodynamical, additive combinatoric or renewal theoretical approaches. The thermodynamic approach that relies on large deviation theory is a good way to deal with measures that are not Ahlfors-David regular, while the Patterson-Sullivan measures in the additive combinatoric used approach are. Moreover, as we saw in the self-similar case, the renewal theoretic approach by Li \cite{Li} from 2018 works well even with complicated overlaps in the IFS.

In \cite{SahlstenStevens} with Stevens we studied how the ideas of Jordan and the author \cite{JordanSahlsten} and Bourgain-Dyatlov \cite{BD1}, together with some ideas from random walks on groups by Li \cite{Li2}, could be combined to try to tackle Problem \ref{prob:nonlinear} in the $C^2$ category under suitable separation conditions and with a different non-linearity assumption coming from the theory of exponential mixing of Anosov flows. We studied $C^2$ IFSs $\Phi = \{f_j : h \in \cA\}$ that satisfied \textit{Uniform Non-Integrability} (UNI), that is, there exists constants $c_1,c_2 > 0$ such that for all large enough $n$, we can always find two words $\a,\b \in \cA^n$ such that the distortions satify:
$$c_1 \leq \Big|\frac{f_\a''(x)}{f_\a'(x)} - \frac{f_\b''(x)}{f_\b'(x)}\Big| \leq c_2, \quad x \in [0,1].$$
This condition was introduced by Chernov \cite{Chernov} from 1998 and studied by Dolgopyat \cite{Dolgopyat1,Dolgopyat} from 1998 as a key assumption for an IFSs coming from Anosov flows to prove the exponential mixing of the flow. In particular, if $\Phi$ satisfies the \textit{strong separation condition} (i.e. $f_j(K_\Phi)$, $j \in \cA$ are disjoint), the UNI condition, as shown by Naud \cite{Naud} in 2005 and Stoyanov \cite{Stoyanov} from 2011 in higher dimensions, allows one to prove \textit{uniform spectral gap} bounds for the complex transfer operators
$$\cL_s u(x) = \sum_{j \in \cA} p_j |f_j'(x)|^{s} u(f_j(x)), \quad x \in \R, u \in C^1(\R)$$
for $s \in \C$ with $|\mathrm{Re}(s)|$ small and $|\mathrm{Im}(s)|$ large. That is, bounds for the spectral radius of $\cL_s$ that are uniform in $s$ in this strip.

If $\Phi$ is $C^\omega$, then it goes back to the work of Naud \cite{Naud}, who showed that the UNI condition can be checked with a slightly more easily checkable \textit{total non-linearity} condition that says the IFS is not $C^1$ conjugated to an IFS $\{h_j : j \in \cA\}$ such that $h_j'$ are locally constant, that is, it is not possible to find a $C^1$ diffeomorphism $g : \R\to \R$ such that for all $j \in \cA$ there exists $a_j \in \R$ such that
$$\log |f_j'(x)| = a_j + g(f_j(x)) - g(x), \quad x \in [0,1].$$
If $\Phi$ is $C^{\omega}$, total non-linearity is equivalent to UNI, which was shown in Naud's paper \cite{Naud}. Moreover, in \cite{Naud} Naud studied a more general condition known as Non-Local Integrability showing as examples that the limit sets of Fuchsian groups and quadratic Julia sets satisfy the UNI, see also \cite{BV} for study in the case of the Gauss map. If $\Phi$ is only $C^{2}$ and the attractor $K_\Phi = [0,1]$, in which case only an open set condition is satisfied, then total non-linearity is equivalent to UNI and there is still a uniform spectral gap, see e.g. \cite{AGY,Stoyanov}.

Under the UNI condition and strong separations, we proved in \cite{SahlstenStevens} the following case:

\begin{theorem}\label{thm:StevensSahlsten}
Suppose $\Phi$ is a $C^2$ IFS satisfying UNI and the strong separation condition. Then the Fourier transform of every non-atomic self-conformal measure $\mu$ associated to $\Phi$ has polynomial Fourier decay.
\end{theorem}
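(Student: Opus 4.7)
The plan is to combine the thermodynamic formalism approach from the Gauss map setting (Theorem \ref{thm:JordanSahlsten}) with the additive combinatorial sum-product machinery of Bourgain and Dyatlov (Theorem \ref{thm:discretised} and its application in Theorem \ref{thm:bourgaindyatlov}), with UNI playing the role that the Schottky geometry played in the Fuchsian case. First I would iterate the self-conformality relation $\mu = \sum_j p_j f_j \mu$ up to a depth $n \sim c\log|\xi|$ chosen with respect to the Lyapunov exponent $\lambda = -\int \log|f_j'(x)|\, d\mu(x)$. Using large deviation estimates supplied by the thermodynamic formalism, I restrict to regular words $\cR_n(\eps)$ with $|f_\a'(x)| \approx e^{-\lambda n}$ uniformly in $x$, incurring only a polynomially small error, so that
$$|\widehat{\mu}(\xi)| \lesssim \int \Big|\sum_{\a \in \cR_n(\eps)} p_\a e^{-2\pi i \xi f_\a(x)}\Big|\, d\mu(x) + |\xi|^{-\delta}.$$

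Next, following the Bourgain--Dyatlov strategy, I would group the words $\a \in \cR_n(\eps)$ into $k+1$ blocks of roughly equal length, where $k > 1/\kappa$ is chosen according to the Frostman exponent $\kappa$ coming from Theorem \ref{thm:discretised}. Applying Cauchy--Schwarz together with bounded distortion, one reduces the exponential sum above (in analogy with \eqref{eq:FourierBoundFuchsian}) to an estimate of the form $|\widehat{\nu_1 \odot \cdots \odot \nu_k}(\eta)|$, where each $\nu_j$ is a discrete measure supported on renormalized derivatives $\tau^{-2} f_{\a_j'\b_j'}'(x_{\a_j})$ as $\b_j$ ranges over block words of length $\sim n/(k+1)$. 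Theorem \ref{thm:discretised} then reduces polynomial Fourier decay of $\widehat{\mu}$ to the verification of a uniform Frostman condition $\nu_j(B(y,\rho)) \lesssim \rho^\kappa$ on the renormalized derivative measures at the relevant intermediate scales.

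The key step is where UNI enters: by the chain rule and bounded distortion, the concentration of $\log|f_\b'(x)|$ as $\b$ varies is governed by the distribution of Birkhoff sums of the potential $\psi = \log|f_j'|$ on the symbolic space, and UNI is precisely the statement that the derivative of the distortion $f_\b''/f_\b'$ produces genuine fluctuations (not cohomologous to a locally constant coboundary). Combined with the strong separation condition, UNI is exactly the Dolgopyat--Naud--Stoyanov hypothesis giving uniform spectral gap for the complex transfer operators $\cL_s$ at $s = s(\xi)$ with $|\mathrm{Im}(s)|$ large. One can then iterate the Ruelle--Perron--Frobenius operator with the oscillatory weight $e^{-2\pi i \xi f_\b}$ and use the spectral gap to produce exponential decay of the derivative correlation sums, which, after dyadic pigeonholing, yields the required non-concentration $\nu_j(B(y,\rho)) \lesssim \rho^\kappa$ at the scales $\rho \in [|\xi|^{-1},|\xi|^{-\eps_1}]$ demanded by Theorem \ref{thm:discretised}.

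The hard part will be Step 3: showing the non-concentration for $\nu_j$ at polynomial scales $\rho \in [|\xi|^{-1},|\xi|^{-\eps_1}]$ with a \emph{uniform} Frostman exponent independent of $\xi$. In the Fuchsian case this non-concentration follows essentially geometrically from the Schottky structure and Ahlfors--David regularity of the Patterson--Sullivan measure, but for a general $C^2$ IFS we only have Frostman regularity of $\mu$ and must instead combine large deviation bounds for the derivative cocycle with the uniform spectral gap of $\cL_s$ to obtain quantitative control at every intermediate scale. Coupling these two ingredients -- an essentially multiplicative (sum-product) bound and an essentially additive (renewal / transfer operator) bound -- in a single stopping-time argument that respects both the depth $n \sim \log|\xi|$ of the symbolic iteration and the block decomposition required by Theorem \ref{thm:discretised} is the main technical hurdle.
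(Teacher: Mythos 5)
Your proposal follows essentially the same route as the paper's own outline: large deviations to restrict to regular words as in the Gauss-map argument, the Bourgain--Dyatlov block decomposition feeding into the discretised sum-product theorem, and the key non-concentration of the renormalised derivatives obtained from the uniform spectral gap of the complex transfer operators $\cL_s$ under UNI and strong separation, following Li's scheme. You have also correctly identified the genuine technical hurdle (uniform Frostman bounds for the derivative measures at all intermediate polynomial scales), which is exactly where the paper says the main work lies.
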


To study Fourier transform of $|\widehat{\mu}(\xi)|$ we can first, as in the work of Jordan and the author \cite{JordanSahlsten}, use large deviations to just focus on the words $\a \in \cA^n$ for which 
$$e^{-(\lambda +\eps) n } \lesssim |f_\a'(x)| \lesssim e^{-(\lambda -\eps) n }$$ 
for some fixed $\eps > 0$. Then we can reduce, as Bourgain and Dyatlov \cite{BD1} did, by iterating using blocks of length $k$ and $k+1$ of length $n$ words to exponential sums involving products of normalised derivatives $e^{-2\lambda n} f_{\a_j' \b'}' (x_\a)$, and employ the sum-product theory (recall Theorem \ref{thm:discretised}). However, the difficulty is to show non-concentration of the normalised derivatives. Here is where the transfer operators $\cL_s$ with $s$ of the form $s = ic\xi$ for a suitable constant $c > 0$ can be helpful. As it was shown in Li \cite{Li2} in 2018 for transfer operators arising from random walks on groups, having uniform spectral gap for complex transfer operators can be used to prove non-concentration type estimates for the random walks Li considered. With Stevens, we followed this scheme in the proof of Theorem \ref{thm:StevensSahlsten} adapted to the transfer operators $\cL_s$.

Now going back to Problem \ref{prob:nonlinear}, what about the cases where the IFS has overlaps, has $C^{1+\alpha}$ regularity or the UNI condition fails? Considerable advances have been done recently to resolve these, which we now review. Briefly after \cite{SahlstenStevens} came out, Algom, Hertz and Wang \cite{AHW} adapted independently a different approach to the study of $C^{1+\alpha}$ IFS that can have arbitrary overlaps. In particular, they managed to deal with low regularity and overlaps at the price of non-quantitative rate of Fourier decay by introducing a new method based on quantitative central limit theorems on random walks on groups by Benoist and Quint \cite{BQ} from 2016:

\begin{theorem}\label{thm:AHW1}
Suppose $\Phi$ is a $C^{1+\alpha}$ IFS such that $\{\log |f_j'(x)| : f_j(x) = x, j \in \cA\}$ is not included in any $a\Z + b$. Then every non-atomic self-conformal measure is Rajchman.
\end{theorem}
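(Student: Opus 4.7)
The plan is to derive Rajchman decay from a non-arithmetic renewal/central-limit theorem for the random walk generated by the log-derivatives of the IFS. The self-conformal invariance provides a recursion for $\widehat{\mu}$, and the hypothesis on fixed-point derivatives translates into exactly the non-lattice assumption needed for that walk. Since only qualitative decay is sought, we can tolerate the low $C^{1+\alpha}$ regularity and the absence of any separation condition.

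First I would iterate $\mu = \sum_j p_j (f_j)_\ast \mu$ to depth $n$ to write
$$\widehat{\mu}(\xi) = \sum_{\a \in \cA^n} p_\a \int e^{-2\pi i \xi f_\a(x)}\, d\mu(x),$$
and introduce a stopping time so that $|\xi f_\a'(y)| \approx 1$ at a chosen base point $y$ in the attractor. Using the linearisation $f_\a(x) = f_\a(y) + f_\a'(y)(x-y) + R_\a(x)$ with $|R_\a(x)| \lesssim \|f_\a'\|_\infty^{1+\alpha}$ coming from $C^{1+\alpha}$ bounded distortion, the phase $\xi R_\a(x)$ is negligible at the stopping time, yielding the approximate identity
$$\widehat{\mu}(\xi) \;\approx\; \sum_\a p_\a\, e^{-2\pi i \xi f_\a(y)}\, \widehat{\mu}\bigl(\xi f_\a'(y)\bigr) + o(1).$$

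Second, I would recast $S_n(\a) := -\log|f_\a'(y)|$ as a random walk on $\R$. Bounded distortion gives $S_n(\a) = \sum_k \bigl(-\log|f_{j_k}'(x_{j_k})|\bigr) + O(1)$, where $j_k$ is the $k$-th letter of $\a$ and $x_{j_k}$ is the fixed point of $f_{j_k}$; hence $S_n$ shares its arithmetic type with the walk whose i.i.d.\ increments take values $-\log|f_j'(x_j)|$ with probabilities $p_j$. The hypothesis that $\{\log|f_j'(x_j)| : j \in \cA\}$ is not contained in any $a\Z+b$ is equivalent to the closed subgroup of $\R$ generated by the differences $\log|f_i'(x_i)| - \log|f_j'(x_j)|$ being dense, i.e.\ to $S_n$ being non-arithmetic. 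Under this condition, the classical renewal theorem (or the quantitative CLT of Benoist--Quint applied in this scalar abelian setting) provides an absolutely continuous limiting distribution for the residue $\log|\xi| - S_{n_\xi}$ as $|\xi| \to \infty$.

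Third, assuming for contradiction that $|\widehat{\mu}(\xi_m)| \geq c > 0$ along some $\xi_m \to \infty$, I would extract a weak-$*$ subsequential limit $\nu$ of the empirical measures $\sum_\a p_\a\, \delta_{(\xi_m f_\a'(y),\, \xi_m f_\a(y) \bmod 1)}$ on a compact subset of $\R \times \R/\Z$. The renewal theorem forces the $\R$-marginal of $\nu$ to be absolutely continuous; a parallel equidistribution argument for the translation part, using that the non-arithmetic log-derivatives prevent the phases $\xi_m f_\a(y)$ from concentrating on any lattice, shows that the $\R/\Z$-marginal is also smooth. The recursion then reads $c \leq \bigl|\int \widehat{\mu}(\eta)\, e^{-2\pi i \theta}\, d\nu(\eta,\theta)\bigr| + o(1)$, and Riemann--Lebesgue applied to the smooth $\theta$-marginal forces the right-hand side to be $o(1)$, yielding the required contradiction.

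The main obstacle is precisely this last compactness-and-rigidity step: $\widehat{\mu}$ is only known to be bounded at infinity, with no a priori continuity or decay, so converting equidistribution of $\log|\xi_m f_\a'(y)|$ into honest smallness of the weighted average requires identifying $\nu$ up to a rigidity input in the Benoist--Quint framework. Doing so in the $C^{1+\alpha}$ category, where bounded distortion is only H\"older and no $C^2$ derivative estimates are available, and without any separation condition on $\Phi$ to control overlaps, is the technical heart of the argument and the reason the method yields only Rajchman decay, not a quantitative (polynomial or polylogarithmic) rate.
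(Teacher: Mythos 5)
Your first two steps point in the right direction --- the Algom--Hertz--Wang theorem is indeed proved by linearising the self-conformal recursion and feeding the non-lattice condition on the fixed-point log-derivatives into a (local) central limit theorem for the Birkhoff sums $S_n(\a)=-\log|f_\a'|$, which is exactly the ``quantitative CLT for random walks'' route the survey attributes to \cite{AHW} --- but the argument as written has a fatal gap in the third step. By stopping at $|\xi f_\a'(y)|\approx 1$ you place all the frequencies $\eta_\a=\xi f_\a'(y)$ in a fixed compact window where $\widehat{\mu}(\eta_\a)$ is merely bounded, so the entire decay must come from cancellation among the phases $e^{-2\pi i\xi f_\a(y)}$. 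The hypothesis, however, constrains only the multiplicative (derivative) part of the cocycle and says nothing about the translation parts $\xi f_\a(y)\bmod 1$; proving that these equidistribute is essentially the statement of the theorem, not an available input. Moreover, even if the $\R/\Z$-marginal of your weak-$*$ limit $\nu$ were absolutely continuous, Riemann--Lebesgue gives no information about its \emph{first} Fourier coefficient $\int e^{-2\pi i\theta}\,d\nu(\theta)$, which is what your final inequality needs to vanish; only an exactly Lebesgue marginal would do. The actual proof goes the opposite way: one \emph{discards} the phases with the triangle inequality, obtaining $|\widehat{\mu}(\xi)|\le\sum_\a p_\a|\widehat{\mu}(\xi f_\a'(y))|+o(1)$, runs the iteration to a large fixed depth $n$ so that, by the non-lattice local CLT, the values $\log|\xi f_\a'(y)|$ spread over an interval of length $\asymp\sqrt{n}$ with density $O(1/\sqrt{n})$, and then shows that the resulting average of $|\widehat{\mu}|$ over a long multiplicative range of frequencies tends to zero using Wiener's lemma for the non-atomic measure $\mu$ (a Davenport--Erd\H{o}s--LeVeque-type averaging). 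This is also precisely why the method yields only the qualitative Rajchman property and no rate.

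Two further, smaller errors should be flagged. First, the linearisation bound is $|R_\a(x)|\lesssim\|f_\a'\|_\infty|x-y|^{1+\alpha}$, not $\|f_\a'\|_\infty^{1+\alpha}$; since $x$ ranges over the whole attractor, the phase error $\xi R_\a$ is of order $\xi\|f_\a'\|_\infty\approx 1$ at your stopping time, i.e.\ not negligible. One must linearise on small cylinders (a two-scale decomposition $\a=\b\c$, linearising $f_\b$ on $f_\c(K_\Phi)$, with error $\xi\|f_\b'\|\,\|f_\c'\|^{1+\alpha}$) to make the remainder genuinely $o(1)$. Second, bounded distortion gives $\log|f_\a'(y)|-\log|f_\a'(z)|=O(1)$, but it does \emph{not} give $\log|f_\a'(y)|=\sum_k\log|f_{j_k}'(x_{j_k})|+O(1)$: the per-letter errors are each $O(1)$ and there are $n$ of them, so the walk is not reducible to that i.i.d.\ walk. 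The hypothesis enters instead through the Liv\v{s}ic periodic-orbit obstruction: if the H\"older cocycle $\log|f'|$ were cohomologous to a lattice-valued one, the values at the fixed points of the generators would lie in some $a\Z+b$; the hypothesis rules this out, and it is the non-lattice local limit theorem for Birkhoff sums of this H\"older cocycle over the Bernoulli shift that one actually applies.
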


Finding out the rate of decay (polylogarithmic or polynomial), remained as a very challenging problem especially in the low regularity and in the overlapping case due to the lack of tools in these settings. Indeed, in Theorem \ref{thm:StevensSahlsten} a spectral gap was proved that needed a separation condition where as in Theorem \ref{thm:AHW1} the central limit theorem rate could not be used to get such rapid decay even in the best cases.

If one makes the IFS more regular, some of these issues can be overcome. We say that $\Phi$ is \textit{$C^r$ conjugated to IFS} $\Psi$ if there exists a $C^r$ diffeomorphism $h : \R \to \R$ such that $h\Phi = \{h f_j h^{-1} : j \in \cA\} = \Psi$. In the literature, such as \cite{HochmanShmerkin}, $\Phi$ is also called \textit{totally non-linear} if it is not $C^1$ conjugated to a self-similar IFS, which is slightly different from the earlier definition described using the log-derivatives. A $C^1$ IFS $\Phi$ is Diophantine if there exists $l,C > 0$ such that 
$$\inf_{y \in \R} \max_{1 \leq i \leq n} d(\log |f'_j(x)| \cdot x + y,\Z) \geq C|x|^{-l}$$ 
for all $x \in \R$ with $|x|$ large enough. We say IFS $\Psi = \{g_j : j \in \cA\}$ is \textit{linear} if $g_j'' = 0$ for all $j \in \cA$. Then as long as the IFS is not conjugated to linear Diophantine IFSs, Algom, Hertz, Wang obtained:

\begin{theorem}\label{thm:AHW2}
Let $\Phi$ be an orientation preserving $C^r$ IFS, $r \geq 2$, that is not $C^r$ conjugate to a linear non-Diophantine IFS. Then every non-atomic self conformal measure associated to $\Phi$ has polylogarithmic Fourier decay.
\end{theorem}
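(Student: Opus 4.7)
The plan is to prove Theorem \ref{thm:AHW2} via a dichotomy based on whether $\Phi$ is $C^r$-conjugate to some linear IFS. Suppose first that $\Phi = h^{-1} \Psi h$ for a $C^r$ diffeomorphism $h$ and a linear IFS $\Psi = \{x \mapsto r_j x + b_j : j \in \cA\}$. The hypothesis of the theorem forces $\Psi$ to be Diophantine. Pushing a self-conformal measure $\mu$ for $\Phi$ forward under $h$ yields a self-similar measure $\tilde\mu = h_*\mu$ for $\Psi$. A Diophantine version of Theorem \ref{thm:lisahlstenRate} applied to $\tilde\mu$ (extending Li--Sahlsten's pairwise non-Liouville condition to the uniform Diophantine one) delivers polylogarithmic decay $|\widehat{\tilde\mu}(\xi)| = O((\log|\xi|)^{-\gamma})$. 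Transferring this back to $\mu$ via $\widehat{\mu}(\xi) = \int e^{-2\pi i \xi h^{-1}(y)} \, d\tilde\mu(y)$ would be done by a non-stationary phase argument: since $r \geq 2$, the phase $y \mapsto \xi h^{-1}(y)$ has a non-vanishing derivative at scale proportional to $\xi$, so after mollifying $\tilde\mu$ at a scale slightly smaller than $(\log|\xi|)^{-\gamma'}$, integration by parts converts the decay of $\widehat{\tilde\mu}$ into polylogarithmic decay of $\widehat{\mu}$.

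In the totally non-linear case, where $\Phi$ is not $C^r$-conjugate to any linear IFS, the plan is to upgrade the qualitative argument behind Theorem \ref{thm:AHW1} to a quantitative one. I would combine the stopping time reduction of Theorem \ref{thm:lisahlsten} with a quantitative central limit theorem for the log-derivative random walk $S_n = \sum_{k=1}^n \log|f'_{a_k}(\cdot)|$ driven by the probability vector $(p_j)$. Cauchy--Schwarz together with the stopping time $n_t = \min\{n : S_n > t\}$ reduces $|\widehat{\mu}(\xi)|^2$ for $\xi = s e^t$ to an expectation
\[
\E_t\, g_{s,x,y}\!\big(S_{n_t}-t\big), \qquad g_{s,x,y}(z) = e^{-2\pi i s(x-y) e^{-z}},
\]
plus small non-atomicity error. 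Total non-linearity guarantees that $\log|f'_j|$ is not cohomologous (over the subshift) to a locally constant cocycle, so the limiting residual distribution of $S_{n_t}-t$ is genuinely absolutely continuous. Using a Berry--Esseen/quantitative renewal rate for this cocycle (available since $r \geq 2$ provides the needed Hölder regularity and moments for $\log|f'_j|$) yields a polynomial-in-$t$ error. Since $\xi = s e^t$, polynomial decay in $t$ becomes polylogarithmic decay in $|\xi|$ after the Riemann--Lebesgue step applied to the push-forward of the residual distribution under $z \mapsto e^{-z}$.

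The main obstacle will be producing the quantitative renewal/Berry--Esseen rate in the totally non-linear case without strong separation or the UNI hypothesis, since only $C^{1+\alpha}$ data and possible overlaps preclude the transfer-operator spectral gap that drove Theorem \ref{thm:StevensSahlsten}. The substitute must come from quantitative equidistribution for random walks on $\mathrm{Diff}([0,1])$ in the spirit of Benoist--Quint \cite{BQ16}, applied to the non-linear cocycle $\log|f'_j|$, with care to keep the rate polynomial rather than merely $o(1)$; this is where the total non-linearity assumption enters in a quantitative way via a non-degeneracy lower bound on the asymptotic variance of $S_n$. A secondary obstacle is the transfer across the $C^r$ conjugacy in the first case: the mollification scale and the order of the oscillatory-integral expansion of $h^{-1}$ must be tuned so that the polylogarithmic rate survives rather than being absorbed into error terms, which demands sharp non-stationary phase with merely $C^r$ phase functions.
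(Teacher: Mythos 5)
Your dichotomy reads the hypothesis correctly, and your second branch is in the right spirit: the survey attributes to \cite{AHW,AHW2} a method based on quantitative random-walk/central-limit estimates for the derivative cocycle, applied directly to the self-conformal measure, very much like the stopping-time reduction of Theorems \ref{thm:lisahlsten}--\ref{thm:lisahlstenRate}. But the decisive quantitative input there is not a lower bound on the asymptotic variance of $S_n$. Positive variance plus Berry--Esseen controls the distribution of $S_{n_t}-t$ at scale $\sqrt{t}$, whereas the renewal estimate you need concerns this residue at scale $O(1)$, and its \emph{rate} is governed by quantitative non-arithmeticity of the cocycle: lower bounds on how far $(\log|f'_j|)_{j\in\cA}$ is, modulo coboundaries, from taking values in a lattice, equivalently control of the twisted transfer operator $\cL_{ib}$ for $b$ in a polynomially large window. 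This is exactly where the hypothesis ``not $C^r$-conjugate to a linear non-Diophantine IFS'' must enter, in \emph{both} branches, since that Diophantine property of the log-derivative cocycle is a $C^r$-conjugacy invariant.

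The genuine gap is the transfer step in your first branch. Knowing $|\widehat{\tilde\mu}(\eta)| = O((\log|\eta|)^{-\gamma})$ for $\tilde\mu = h_*\mu$ does not yield decay of $\widehat{\mu}(\xi) = \int e^{-2\pi i\xi h^{-1}(y)}\,d\tilde\mu(y)$ by non-stationary phase. To linearise the phase $y \mapsto \xi h^{-1}(y)$ you must localise to intervals of length $\lesssim |\xi|^{-1/2}$ (a polylogarithmic mollification scale leaves a quadratic phase error of size $|\xi|(\log|\xi|)^{-2\gamma'}\to\infty$), and the localised integrals $\int_I e^{-2\pi i \eta y}\,d\tilde\mu(y)$ are not controlled by $\widehat{\tilde\mu}(\eta)$: Fourier decay is a global cancellation property and is destroyed by restriction. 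Equivalently, writing $\widehat{\mu}(\xi)=\int \widehat{F_\xi}(\eta)\,\widehat{\tilde\mu}(-\eta)\,d\eta$ with $F_\xi = \chi\, e^{-2\pi i \xi h^{-1}}$, the function $\widehat{F_\xi}$ is concentrated on an $\eta$-interval of length $\sim|\xi|$ with $L^1$-mass at least of order $|\xi|^{1/2}$ (worse if $(h^{-1})''$ vanishes), which annihilates any polylogarithmic gain. This is why the known conjugacy-transfer results (Kaufman, Mosquera--Shmerkin \cite{MS}, and Theorem \ref{thm:nonlinearConjugated} via \cite{ACWW,BakerBanaji}) go in the \emph{opposite} direction, require $F''>0$, and exploit the self-similar structure of $\tilde\mu$ rather than the decay of $\widehat{\tilde\mu}$. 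The repair is to avoid transferring altogether: run the renewal/equidistribution argument directly on $\Phi$ in both cases, with the conjugacy-invariant Diophantine property of the cocycle supplying the quantitative non-arithmeticity.
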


It is helpful to compare this to \cite{JialunSahlsten1}, where polylogarithmic Fourier decay was obtained for self-similar IFSs $\{x \mapsto r_j x + b_j : j \in \cA\}$ for which $\log r_i / \log r_j$ is not Liouville number for some $i \neq j$. Note that Algom, Hertz, and Wang also showed that if a $C^{\omega}$ IFS is linear, then it is self-similar, so Theorem \ref{thm:AHW2} in the $C^\omega$ category is about not being conjugated to non-Diophantine linear IFSs like the ones considered in \cite{JialunSahlsten1}.

After Theorems \ref{thm:StevensSahlsten} and \ref{thm:AHW1} and \ref{thm:AHW2}, it remains unclear how to prove polynomial Fourier decay in the overlapping case or having quantitative rate in the $C^{1+\alpha}$ category. If we remove the separation condition in the proof of Theorem \ref{thm:StevensSahlsten}, a key obstacle in the approach used in \cite{SahlstenStevens} was that the uniform spectral gap for $\cL_s$ that was needed for the non-concentration of the normalised derivatives was not known. This was also an obstacle for adapting a renewal theoretic approach as the rate was not known to be exponential. All these were resolved by Algom, Hertz, Wang \cite{AHW3} in 2023, and simultaneously Baker and the author \cite{BakerSahlsten} in 2023, where two different approaches to the polynomial Fourier decay in the overlapping $C^2$ category were established:

\begin{theorem}\label{thm:generalnonlinearUNI}
Suppose $\Phi$ is a $C^2$ IFS that satisfies UNI. Then the Fourier transform of every non-atomic self-conformal measure $\mu$ associated to $\Phi$ has polynomial Fourier decay.
\end{theorem}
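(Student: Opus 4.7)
The plan is to follow the scheme of Theorem \ref{thm:StevensSahlsten} and replace the separation-dependent input with a non-concentration estimate that remains valid under overlaps. Starting from a non-atomic self-conformal measure $\mu$ with weights $(p_j)$, I would iterate $\mu = \sum_j p_j f_j \mu$ inside $|\widehat{\mu}(\xi)|^2$ up to scale $n \sim c \log|\xi|$, use a large deviation argument to restrict to the regular words $\a \in \cA^n$ for which $|f_\a'(x)| \asymp e^{-\lambda n}$ uniformly in $x$ (with $\lambda$ the Lyapunov exponent of $\mu$), and split each such word into $k+1$ consecutive blocks $\A = \a_1 \cdots \a_{k+1}$ of length $m \approx n/(k+1)$. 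A Cauchy--Schwarz step in the outer blocks as in the reduction leading to \eqref{eq:FourierBoundFuchsian} then gives, for some $\eps_2 > 0$,
\[
|\widehat{\mu}(\xi)|^2 \lesssim |\xi|^{-\eps_2} + \max_{\A}\, \sup_{|\eta| \asymp |\xi|^{1/2}} \Big| \int \exp\Bigl(-2\pi i \eta \prod_{j=1}^{k} e^{-2\lambda m} f'_{\a_j' \b_j'}(x_{\a_j}) \Bigr)\, d\nu_\A(\b_1,\dots,\b_k) \Big|,
\]
where $\nu_\A$ is a product of discrete measures supported on the allowed inner blocks.

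The inner supremum is precisely a multiplicative convolution of $k$ empirical measures $\nu_{j,\A}$, and by the discretised sum-product Theorem \ref{thm:discretised} it decays polynomially in $|\xi|$ as soon as each factor satisfies a uniform Frostman bound $\nu_{j,\A}(B(y,\rho)) \lesssim \rho^{\kappa}$ on scales $\rho \in [|\xi|^{-1},|\xi|^{-\eps_3}]$. Under strong separation this bound was produced in Theorem \ref{thm:StevensSahlsten} by combining Li's non-concentration scheme \cite{Li2} with the uniform spectral gap of the complex transfer operators $\cL_{ic\xi}$ due to Dolgopyat--Naud--Stoyanov. In the overlapping setting I would instead extract the Frostman bound by directly analysing the log-derivative cocycle $S_n\phi(\a,x) := \log|f'_\a(x)|$: because UNI prevents $\phi$ from being cohomologous to a locally constant potential, one expects a local limit theorem with polynomial error term for $S_n\phi$ with respect to the Bernoulli measure $(p_j)^{\otimes \N}$. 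Combined with the non-atomicity estimate $\int \mu(B(x,\delta))\,d\mu(x) \lesssim \delta^{\eps_1}$ of \cite{FL}, the local limit theorem translates into non-concentration of each $\nu_{j,\A}$ at the required scales, and Theorem \ref{thm:discretised} then closes the argument.

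The main obstacle is to establish the local limit theorem, or equivalently a uniform spectral gap for $\cL_{ic\xi}$, without invoking disjointness of cylinders. The classical Dolgopyat oscillation machinery implements its $L^2$-contraction using indicator functions of disjoint cylinders, which is no longer available under overlaps. The remedy I would pursue is to set up $\cL_{ic\xi}$ on a Banach space of $C^1$ observables on $K_\Phi$ (or on the symbolic space) and to extract the oscillatory cancellation directly from UNI via a modified Lasota--Yorke plus oscillation estimate, treating the overlaps as a perturbation controlled by the regularity of observables rather than by combinatorial disjointness. Verifying that this modified scheme delivers the exponential-in-$t$ error needed in the renewal/local limit step (and hence a polynomial gain in $|\xi|$ in the Frostman estimate for $\nu_{j,\A}$) is the core technical point, and is essentially what distinguishes \cite{AHW3, BakerSahlsten} from \cite{SahlstenStevens}.
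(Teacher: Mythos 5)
Your outer scheme is exactly the one the survey describes for Theorem \ref{thm:StevensSahlsten}, and it does carry over formally to the overlapping case: large deviations to restrict to regular words, a block decomposition with Cauchy--Schwarz, and Theorem \ref{thm:discretised} applied to the empirical measures of normalised derivatives, so that everything hinges on a Frostman/non-concentration bound for those measures at scales between $|\xi|^{-1}$ and $|\xi|^{-\eps}$. You also correctly locate the crux: under strong separation that bound comes from the uniform spectral gap of $\cL_{ib}$, and the entire content of Theorem \ref{thm:generalnonlinearUNI} is to obtain that gap (equivalently, an exponential error term uniform over frequencies $|b|\le e^{Cn}$) without disjointness of cylinders.

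The gap is in how you propose to do that. ``Treating the overlaps as a perturbation controlled by the regularity of observables'' is not a workable substitute: overlaps are not small in any norm, and the failure of Dolgopyat's method here is structural rather than perturbative --- the $L^2$-contraction step localises the oscillation gain on disjoint cylinders and uses the Gibbs-type comparison $\mu(f_\a([0,1])) \asymp p_\a$, both of which genuinely fail with overlaps (the self-conformal measure need not be a Gibbs state for the symbolic potential, and exponentially many words can land in the same ball, so a local limit theorem for $S_n\phi$ with respect to the Bernoulli measure does not by itself give the Frostman bound for $\nu_{j,\A}$ at the exponentially small scales Theorem \ref{thm:discretised} requires). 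The idea that actually closes this step in \cite{AHW3,BakerSahlsten} --- and the one your proposal is missing --- is a disintegration of $\cL_s$ over non-linear sub-IFSs, in the spirit of Algom--Baker--Shmerkin \cite{ABS}, combined with a cocycle version of Dolgopyat's method; the disintegration restores enough separation inside each piece to run the oscillation argument and yields the uniform spectral gap near the critical line. Without that (or an equally concrete replacement), your argument stops exactly at the point you yourself flag as ``the core technical point.''
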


The articles \cite{AHW3, BakerSahlsten} in particular proved new uniform spectral gap bounds for $\cL_s$ near the critical line in the overlapping case that requires a co-cycle version of Dolgopyat's method. The methods to study transfer operators $\cL_s$ for overlapping IFSs was based on a disintegration of $\cL_s$ using non-linear sub-IFSs, which was inspired by the work of Algom, Baker and Shmerkin \cite{ABS} from 2022 in the study of normal numbers on self-similar sets. 

Going back to Problem \ref{prob:nonlinear}, combined with the argument of Algom, Hertz and Wang \cite{AHW,AHW2}, Theorem \ref{thm:generalnonlinearUNI} implies Problem \ref{prob:nonlinear} in the $C^\omega$ category as long as $\Phi$ is \textit{not} $C^\omega$ conjugated to a self-similar IFS. If such $C^\omega$ conjugacy exists, then the problem still remains open in its full generality. It goes back to the work of Kaufman \cite{KaufmanSurvey} from 1984, where it was proved that if $F : \R \to \R$ satisfies $F'' > 0$ everywhere, then the Fourier transform of $F \mu$ decays polynomially if $\mu$ is the middle $1/3$ Cantor measure. This was generalised to general homogeneous self-similar measures $\mu$ associated to an equicontractive self-similar IFS $\Phi$ by Mosquera and Shmerkin \cite{MS} in 2018. In particular, $F\mu$ is a self-conformal measure to an IFS that is $C^{\omega}$ conjugated to the self-similar IFS $\Phi$. 

Recently, Algom, Chang, Wu, and Wu \cite{ACWW} in 2023 and independently Baker and Banaji \cite{BakerBanaji} in 2023, generalised the works of Kaufman and Mosquera-Shmerkin to pushforwards $F\mu$ of \textit{general} self-similar measures $\nu$ under $C^2$ maps $F : \R \to \R$ with $F'' > 0$ everywhere. The methods were very different: while \cite{ACWW} adapted the large deviation principle of Tsujii \cite{Tsujii} from 2015 to overcome the lack of convolution structure, the article \cite{BakerBanaji} adapted an idea from Galcier-Saglietti-Shmerkin-Yavicoli \cite{GSSY} from 2016 (see also \cite{SSS}) of decomposing a self-similar measure to equicontractive self-similar measures in the study of $L^q$ dimensions, projections and absolute continuity of fractal measures.

When applied to $C^\omega$ IFSs, the works \cite{ACWW,BakerBanaji} lead to the following consequence:

\begin{theorem}\label{thm:nonlinearConjugated}
Suppose $\Phi$ is a $C^\omega$ IFS that is $C^\omega$ conjugated to a self-similar IFS and there exists $f_j \in \cA$ that is not a similitude. Then the Fourier transform of every non-atomic self-conformal measure $\mu$ associated to $\Phi$ has polynomial Fourier decay.
\end{theorem}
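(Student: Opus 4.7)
The plan is to reduce Theorem~\ref{thm:nonlinearConjugated} to the pushforward result of Algom--Chang--Wu--Wu \cite{ACWW} and Baker--Banaji \cite{BakerBanaji} after a localisation that handles the finitely many zeros of the conjugating diffeomorphism's second derivative on the support of $\mu$. Let $h$ be a $C^\omega$ diffeomorphism conjugating $\Phi$ to a self-similar IFS $\Psi = \{g_j(x) = r_j x + b_j : j \in \cA\}$, so that $g_j = h \circ f_j \circ h^{-1}$. If $\mu$ is a non-atomic self-conformal measure for $\Phi$ with probability vector $(p_j)$, then $\nu := h \mu$ is a non-atomic self-similar measure for $\Psi$ with the same weights, and $\mu = H \nu$ where $H := h^{-1}$. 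Since some $f_j$ is not a similitude, $h$ cannot be affine (else every $f_j = h^{-1} g_j h$ would be a similitude), and hence neither can $H$; so the real-analytic function $H''$ is not identically zero, and its zero set $Z \cap K$ inside $K := \spt \nu$ is finite, say $\{z_1, \dots, z_m\}$, with $|H''(x)| \gtrsim |x - z_i|^{m_i}$ near each $z_i$ for some integer $m_i \geq 0$.

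Given $\xi$ with $|\xi|$ large, fix a scale $\delta = \delta(|\xi|) > 0$ to be optimised, and iterate the self-similar relation $\nu = \sum_{|w|=n} p_w g_w \nu$ for $n \sim \log(1/\delta)$ chosen so that $\diam g_w(K) \leq \delta/4$ uniformly in words $w$ of length $n$. Partition these words into $W_{\bad} := \{w : g_w(K) \cap N_{\delta/2}(Z) \neq \emptyset\}$ and $W_{\good}$, its complement. For $w \in W_{\bad}$ one has $g_w(K) \subset N_\delta(Z)$, so the Frostman regularity of the non-atomic self-similar measure $\nu$ (see \cite{FL}) yields
\[
\sum_{w \in W_{\bad}} p_w \;=\; \sum_{w \in W_{\bad}} p_w\, (g_w\nu)(N_\delta(Z)) \;\leq\; \nu(N_\delta(Z)) \;\lesssim\; \delta^{\alpha_0}
\]
for some $\alpha_0 > 0$. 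For $w \in W_{\good}$, $g_w(K)$ is disjoint from $N_{\delta/4}(Z)$, so $|H''| \geq c_\delta \gtrsim \delta^{m_0}$ on $g_w(K)$ with $m_0 := \max_i m_i$; moreover $g_w \nu$ is itself a non-atomic self-similar measure (for the conjugated IFS $\{g_w g_j g_w^{-1}\}$ with the same weights). Extending $H$ from a neighbourhood of $g_w(K)$ to a $C^2$ map on $\R$ with strictly positive second derivative, the pushforward theorem of \cite{ACWW, BakerBanaji} yields polynomial Fourier decay $|\widehat{H(g_w\nu)}(\xi)| \lesssim c_\delta^{-A} |\xi|^{-\beta}$ for some constants $\beta, A > 0$. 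Decomposing $\widehat{\mu}(\xi) = \sum_{|w|=n} p_w \widehat{H(g_w\nu)}(\xi)$ and applying the triangle inequality gives $|\widehat{\mu}(\xi)| \lesssim \delta^{-A m_0} |\xi|^{-\beta} + \delta^{\alpha_0}$, and balancing with $\delta = |\xi|^{-\beta/(A m_0 + \alpha_0)}$ produces polynomial Fourier decay of $\mu$.

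The main technical obstacle is justifying that the pushforward theorem of \cite{ACWW, BakerBanaji} provides an \emph{effective} bound whose constants depend polynomially on the pointwise lower bound of $|H''|$ (and polynomially on the Frostman constant of the self-similar measure), uniformly over $w \in W_{\good}$. This likely requires revisiting the sum-product or large-deviations steps used in those papers to track the dependence of the constants $\beta$ and $A$ on the $C^2$-norm and the pointwise lower bound of the smooth map. A cleaner alternative that sidesteps some of this bookkeeping is to apply the theorem to the map $H \circ g_w$ acting on the fixed self-similar measure $\nu$: then $(H \circ g_w)'' = r_w^2 H''(g_w)$ and the $C^2$-norm of $H \circ g_w$ both scale by $r_w^2$, so the ratio of upper and lower bounds on the second derivative — which typically drives the constants in stationary-phase type arguments — is $w$-independent, reducing the problem to balancing the $\delta$-dependence in $c_\delta$ against $\delta^{\alpha_0}$.
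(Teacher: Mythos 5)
The paper itself contains no proof of Theorem \ref{thm:nonlinearConjugated}: being a survey, it states the result as an immediate consequence of the pushforward theorems of \cite{ACWW,BakerBanaji}, so your reduction is essentially the intended route, and your skeleton is the natural one. Writing $\mu = H\nu$ with $H = h^{-1}$ and $\nu$ self-similar, noting that $H''$ is a non-trivial real-analytic function (since some $f_j$ is not a similitude) whose zero set meets a compact neighbourhood of $\spt\nu$ in finitely many points, and splitting the level-$n$ cylinders into those near and far from these zeros — with the near ones absorbed by the Frostman exponent of $\nu$ — is exactly how one would localise away from the degeneracy of $H''$. The sign issue on good cylinders is harmless ($H''$ has constant sign on the convex hull of each good cylinder, and replacing $H$ by $-H$ only conjugates the Fourier transform), and $g_w\nu$ is indeed a non-atomic self-similar measure, so the individual applications of the cited theorems are legitimate.

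The genuine gap is the one you flag yourself, and it is not cosmetic: the theorems of \cite{ACWW,BakerBanaji}, used as black boxes in the form quoted in the survey ($F\in C^2$ with $F''>0$ everywhere), give $|\widehat{F\nu}(\xi)| \le C(F,\nu)\,|\xi|^{-\beta(F,\nu)}$ for a \emph{fixed} map $F$, whereas your argument applies them to a family of maps — the extensions of $H$ from good cylinders, or equivalently $H\circ g_w$ — whose lower bound on the second derivative degenerates like $\delta^{m_0}$ as $\delta\to 0$. For the balancing step $\delta = |\xi|^{-\beta/(Am_0+\alpha_0)}$ to be legitimate you need the exponent $\beta$ to be uniform over this family and the constant to grow at most polynomially in the reciprocal of the second-derivative lower bound; neither is part of the statement you are invoking, and your ``cleaner alternative'' does not remove the problem, since the ratio $\sup|H''|/\inf|H''|$ on good cylinders, while $w$-independent, is still of size $\delta^{-m_0}$. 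So as written the argument reduces the theorem to an \emph{effective} version of the pushforward theorem that you do not establish; that effectivity (or an alternative treatment of the zeros of $H''$, e.g.\ via higher-order non-degeneracy of the real-analytic $H$ at each $z_i$) is precisely the content that the actual proofs in \cite{ACWW,BakerBanaji} supply and that is missing here.
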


Together with Theorem \ref{thm:generalnonlinearUNI}, Theorem \ref{thm:nonlinearConjugated} implies that Problem \ref{prob:nonlinear} is fully resolved for all $C^\omega$ IFSs. Now what about $C^2$ or $C^{1+\alpha}$ IFSs? This remains an open problem, and useful in the study e.g. in Fractal Uncertainty Principle for hyperbolic sets arising from Anosov flows (see discussions in the work of Dyatlov and Zahl \cite{DyatlovZahl} from 2016). In the $C^2$ category, due to Theorem \ref{thm:generalnonlinearUNI}, one would need to find a way to classify IFSs that do not satisfy UNI:

\begin{problem}\label{prob:nonlinearConjugacyC2}
Suppose $\Phi = \{f_j : j \in \cA\}$ is a $C^{2}$ IFS that does not satisfy UNI and one of the maps $f_j$ is not a similitude. Does the Fourier transform of every non-atomic self-conformal measure associated to $\Phi$ decay polynomially?
\end{problem}

In the $C^{1+\alpha}$ category, things become much more difficult and only the Rajchman property is known by Theorem \ref{thm:AHW1}. The difficulty of proving spectral gap bounds lies in the fact that the way e.g. Naud \cite{Naud} in 2005 adapts Dolgopyat's method replies on requiring the study of second derivative at many points to be able to use the UNI condition. However, there are some recent advances to overcome this in the proof of the exponential mixing of topologically mixing $C^\infty$ Anosov flow on $3$-dimensional compact manifolds by Tsujii and Zhang \cite{TZ} in 2023. In \cite{TZ} the UNI condition was replaced a kind of oscillation condition on the temporal distance function coming from the flow, which could have an analogue in the symbolic setting. Thus to generalise Theorem \ref{thm:AHW1}, one could potentially try to attack Problem \ref{prob:nonlinear} using these new methods and check first e.g. polylogarithmic Fourier decay:

\begin{problem}\label{prob:generalnonlinearCalpha}
Suppose $\Phi$ is a $C^{1+\alpha}$ IFS that is not $C^{1+\alpha}$ conjugate to a linear non-Diophantine IFS. Then does the Fourier transform of every non-atomic self-conformal measure associated to $\Phi$ decay polylogarithmically?
\end{problem}

During the writing of this manuscript, it came to the attention of the author that the upcoming work of Avila, Lyubich and Zhang \cite{ALZ} addressed Problem \ref{prob:generalnonlinearCalpha} under suitable additional conditions. Here Avila, Lyubich and Zhang managed to prove a polylogarithmic Fourier decay of self-conformal measures with a \textit{very large} exponent under a suitable non-linearity assumption on the IFS. Their work was motivated by the study of the Newhouse phenomenon for the H\'enon family related to the \textit{Palis' conjectures} (see e.g. \cite{Matheus} for an overview) and resulting intersections of non-linear Cantor sets. 

Finally, it would be very interesting to try to \textit{optimise} the rate of polynomial Fourier decay of self-conformal measures, and potentially prove the Salem property of the attractor $K_\Phi$, recall the numerics from Figure \ref{fig:1}. Optimising the rate of Fourier decay would lead to advances towards the conjecture of Jacobson and Naud \cite{JN} in 2010 which predict an \textit{optimal essential spectral gap} in the setting of convex co-compact hyperbolic surfaces when the dimension of the limit set is at most $1/2$. Indeed, with a Fractal Uncertainty Principle argument \cite{BD1,DyatlovZahl}, the decay exponent gives a bound for the essential spectral gap. Following problem would be helpful in this quest as the IFSs arising from the convex co-compact Fuchsian groups are analytic:

\begin{problem}\label{prob:salem}
Suppose $\Phi$ is a $C^\omega$ IFS such that one of the maps $f_j \in \Phi$ is not a similitude. Is $K_\Phi$ is Salem set?
\end{problem}

Next cases beyond Problem \ref{prob:nonlinear} would be to study Fourier decay for problems that do not have uniform contraction such as systems that \textit{contract on average} (see e.g. the works of Werner from 2005 and Walkden \cite{Walkden} from 2013) or for \textit{parabolic} IFSs, that is, IFS where one of the branches has a parabolic fixed point: $f_j'(x) = 1$ for some point $x$ and $j \in \cA$, such as the inverse branches of the \textit{Manneville-Pommeau map} \cite{PS} or \textit{Farey map}. In such cases it is possible the large deviation estimates are no longer as strong (see e.g. the work of Pollicott and Sharp \cite{PS} from 2009) as the ones used in \cite{JordanSahlsten,SahlstenStevens}, which could cause issues in proving polynomial Fourier decay rate. Thus potentially the rate could be at most polylogarithmic in these cases if the typical orbits with respect to the self-conformal measure spend much time near the parabolic fixed point.

\begin{problem}\label{prob:parabolic}
If $\mu$ is a non-atomic self-conformal measure for $C^2$ IFS $\Phi$ such that one $f_j \in \Phi$ has a parabolic fixed point and $p_j > 0$. Then does the Fourier transform of $\mu$ decay at most polylogarithmically?
\end{problem}

A way to approach this could be to study uniformly contracting $C^2$ IFSs $\Phi_\eps$ that approach the parabolic IFS $\Phi$. In Theorem 1.6 in \cite{LeclercOsc} from 2022 Leclerc, demonstrated with some examples of non-linear IFS perturbations $\Phi_\eps$ of the self-similar IFS $\{x/2,x/2+1/2\}$ and a family of self-conformal measures $\mu_\eps$ with $|\widehat{\mu}_\eps(\xi)| \leq C_\eps |\xi|^{-\alpha}$ for $\Phi_\eps$ for a fixed $\eps$-independent exponent $\alpha > 0$, but the constants $C_\eps \to \infty$ as $\eps \to 0$ in front of the polynomial decay rate. The ideas here could be used to approach Problem \ref{prob:parabolic}. We will also discuss an analogue of Problem \ref{prob:parabolic} later in Section \ref{sec:higherdim} in the higher dimensional systems in the context of studying parabolic Julia sets.

Another direction to study Problem \ref{prob:parabolic} would be to \textit{accelerate} the dynamics, which would conjugate the parabolic IFS $\Phi$ to an infinite uniformly contracting IFS $\tilde \Phi$, and the self-conformal measure $\mu$ for $\Phi$ to a \textit{heavy tailed} Gibbs measure for $\tilde \Phi$. In \cite{JordanSahlsten} by Jordan and the author in 2016 infinite IFS coming from the Gauss map were considered, but the methods only works for \textit{light tailed} Gibbs measures due to the large deviation bound. This would need investigating the thermodynamical formalism for countable Markov shifts (see e.g. the work by Sarig \cite{Sarig} from 2003) with heavy tails.

\begin{problem}\label{prob:infinitebranches}
If $\mu$ is a Gibbs equilibrium state for a uniformly contracting IFS $\Phi$ with infinite branches and heavy tail, does the Fourier transform decay at most polylogarithmically?
\end{problem}

We note that in the infinite branch cases, uniform spectral gap for the transfer operator can be proved under some assumptions, see the work by Baladi and Vall\'ee \cite{BV} from 2005. If we would like to prove or disprove Problem \ref{prob:infinitebranches} as in \cite{JordanSahlsten,SahlstenStevens} one would also need to study large deviation theory for heavy tailed dynamical systems. The analogues for heavy tailed i.i.d. random processes in $\R$ e.g. for some sub-exponentially tailed walks were proved by Lehtomaa \cite{Lehtomaa} but the analogues for dynamical systems like countable Markov maps have not yet been developed. In \cite{BakerBanaji}, Baker and Banaji from 2023 are able to deal with certain stationary measures for infinite IFSs that are conjugated via a totally non-linear map to infinite linear IFSs (e.g. L\"uroth maps).

Now we have highlighted most of what is known currently in dimension $1$. Going to the higher dimensions there are advantages coming from the added degrees of freedom, but also new obstacles, which we will discuss in the next section.

\section{Higher dimensions}\label{sec:higherdim}

In dimensions $d \geq 2$ the Fourier transforms $\widehat{\mu}(\xi) = \int e^{-2\pi i \xi \cdot x} \, d\mu(x)$, $\xi \in \R^d$ for stationary measures $\mu$ for $\Phi = \{f_j : j \in \cA\}$ can have trivial obstructions due to the challenge on dealing with many directions in the frequencies $\xi \in \R^d$ at once. What conditions guarantee that the Fourier transform $|\widehat{\mu}(\xi)|$ decays (i.e. is a Rajchman measure) and what properties of the IFS control the rate?

We will first focus on the self-similar case, where all $f_j$ are similitudes of the form $r_j \theta_j + b_j$ for some rotation matrices $\theta_j \in \mathrm{SO}(d)$. If the rotations $\theta_j$ generate a wide enough subgroup of $\mathrm{SO}(d)$, they should help with gaining Fourier decay, even if the contraction is an inverse of a Pisot number. If $f_j$ are affine maps $A_j + b_j$ for some general matrices $A_j$ one has to focus on studying the random walks generated by the matrices and how they act on the sphere $\mathbb{S}^{d-1}$, which allows us to reduce much of the theory to that of Li \cite{Li2} from 2018. Finally, the general non-linear case is the least developed in this context, and has some intriguing connections to proving higher dimensional Fractal Uncertainty Principles \cite{Dyatlov} that has gained much attention recently (e.g. in Cohen's work \cite{Cohen} from 2023).

\subsection{Self-similar measures and random walks on rotations}

Let us take $d \geq 2$ and consider self-similar IFSs $\Phi$ on $\R^d$. This means that we can write
$$\Phi = \{f_j = r_j \theta_j + b_j : j \in \cA\}$$
for some $0 < r_j < 1$, $\theta_j \in \mathrm{SO}(d)$ and $b_j \in \R^d$. Let us now take a non-atomic self-similar measure associated to $\Phi$, that is, $\mu = \sum_{j \in \cA} p_j f_j(\mu)$ for some $0 < p_j < 1$ with $\sum_{j \in \cA} p_j = 1$. Note that if IFS $\Phi$ is affinely reducible, that is, the attractor $K_\Phi = \bigcup_{j \in \cA} f_j (K_\Phi)$ is contained in a proper non-trivial subspace $V$, then no self-similar measure for $\Phi$ can be Rajchman, because there is no Fourier decay along $\xi$ that are orthogonal to $V$. Thus the cases that are interesting happen when $\Phi$ is affinely irreducible.

Fourier transforms for self-similar measures were studied by Lindenstrauss and Varj\'u \cite{LV} in 2016 in their work of absolute continuity. Lindenstrauss and Varj\'u studied the absolute continuity of self-similar measures associated to IFSs that do not contract too much in $\R^d$ by connecting this problem to spectral information of transfer operators $T$  defined by the rotations $\theta_j$:
$$T f(\sigma) = \sum_{j \in \cA} p_j f(\theta_j^{-1} \sigma), \quad \sigma \in \mathrm{SO}(d), \quad f \in L^2(\mathrm{SO}(d)),$$
which, note, does \textit{not} depend on the translates $b_j$ of $f_j$. More precisely, \cite{LV} established that if the contractions $r_j$ are close enough to $1$ and the operator $T$ has a \textit{spectral gap}, i.e. $\|T^n\|_{L^2_0(\mathrm{SO}(d))} \to 0$ exponentially as $n \to \infty$, then $\mu$ is absolutely continuous. Here $L^2_0(\mathrm{SO}(d)) := \{f \in L^2(\mathrm{SO}(d)) : \int f = 0\}$.

The way absolute continuity of $\mu$ was established in \cite{LV}, a key part of the argument was to prove the Fourier transform of $\widehat{\mu}$ decays polynomially under the spectral gap of the operator $T$ and large contraction ratios. However, it is not clear what kind of assumptions lead to the spectral gap hypothesis on $T$. It is conjectured (see e.g. Benoist and de Saxc\'e \cite{BenoistSaxce} from 2016) that under dense rotations (i.e. $\theta_j$ form a dense subgroup), $T$ should have a spectral gap. However, this is only known when the rotations $\theta_j$ are \textit{algebraic}, in which case Bourgain and Gamburd \cite{BourgainGamburd} for $SO(3)$ in 2012, and Benoist and de Saxc\'e \cite{BenoistSaxce} in 2016 for general $d \geq 3$, proved the existence of spectral gap for $T$.

However, in 2013 in \cite{Varju3} Varj\'u in proved under dense rotations a weaker contraction property for $\|T^n\|_{L^2_0(\mathrm{SO}(d))}$ that quantitatively involves weights of irreducible representations of $\mathrm{SO}(d)$. This, together with the idea from \cite{LV}, can be used to prove subpolynomial Fourier decay for the associated self-similar measures. We will outline this here for equicontractive IFSs, and we remark that the proof likely generalises to general IFSs as did \cite{LV}. 

\begin{theorem}\label{thm:FourierSelfSimRd}
Let $\Gamma < \mathrm{SO}(d)$ with $d \geq 3$ be a dense subgroup generated by the rotations $\theta_j$, $j \in \cA$, and $\mu$ is a non-atomic self-similar measure associated to the IFS $\Phi = \{f_j = \lambda \theta_j + b_j\}$ with same contraction ratio $0 < \lambda < 1$ and that $f_j$ do not have a common fixed point, then there is a constant $c = c(\mu) > 0$ depending only on $\mu$ such that for any $\xi \in \R^d$, $|\xi| > 1$, we have
$$|\widehat{\mu}(\xi)| \lesssim e^{-c(\log |\xi|)^{1/3}}.$$
\end{theorem}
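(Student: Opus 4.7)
The plan is to follow the strategy of Lindenstrauss and Varj\'u \cite{LV}, combining self-similarity with quantitative spectral bounds for random walks on $\mathrm{SO}(d)$. Iterating $\mu = \sum_{j \in \cA} p_j f_j\mu$ to depth $n$ gives
\begin{equation*}
\widehat{\mu}(\xi) = \sum_{\mathbf{a} \in \cA^n} p_{\mathbf{a}}\, e^{-2\pi i b_{\mathbf{a}}\cdot\xi}\, \widehat{\mu}(\lambda^n \theta_{\mathbf{a}}^T \xi),
\end{equation*}
with $p_{\mathbf{a}} = p_{a_1}\cdots p_{a_n}$, $\theta_{\mathbf{a}} = \theta_{a_1}\cdots\theta_{a_n}$, and $b_{\mathbf{a}}$ the associated translation. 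Jensen's inequality then yields
\begin{equation*}
|\widehat{\mu}(\xi)|^2 \leq \sum_{\mathbf{a} \in \cA^n} p_{\mathbf{a}}\, \phi_R\bigl(\theta_{\mathbf{a}}^T \omega_0\bigr) = \int_{\mathbb{S}^{d-1}} \phi_R\, d\eta_n,
\end{equation*}
where $R := \lambda^n|\xi|$, $\omega_0 := \xi/|\xi|$, $\phi_R(\omega) := |\widehat{\mu}(R\omega)|^2$, and $\eta_n$ is the $n$-step distribution on $\mathbb{S}^{d-1}$ of the random walk started at $\omega_0$ and driven by the weights $p_j$ on the rotations $\theta_j^{-1}$.

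The strategy is to compare $\eta_n$ to the normalised Haar measure $\sigma$ on $\mathbb{S}^{d-1}$. The Haar integral is polynomially small,
\begin{equation*}
\int_{\mathbb{S}^{d-1}} |\widehat{\mu}(R\omega)|^2\, d\sigma(\omega) \lesssim R^{-s_0},
\end{equation*}
since the no-common-fixed-point hypothesis forces $\mu$ to be non-atomic, hence $\mu$ admits a positive Frostman exponent $s_0 > 0$ (e.g.\ by Feng--Lau type arguments), and the spherical $L^2$-average is controlled by the $s_0$-energy of $\mu$. To compare, decompose $\phi_R = \sum_{\ell\geq 0}\phi_{R,\ell}$ in spherical harmonics $\phi_{R,\ell} \in \mathcal{H}_\ell$ and split at a cutoff $L$. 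The density of $\Gamma$ in $\mathrm{SO}(d)$ is precisely the input required for Varj\'u's quantitative spectral bound \cite{Varju}: there exist constants $c_0 > 0$ and $A > 0$ (with the relevant value $A = 2$) such that, for every $L \geq 2$,
\begin{equation*}
\bigl\| T^n P_{\leq L}\bigr\|_{L^2_0(\mathbb{S}^{d-1})} \lesssim \exp\!\bigl(-c_0\, n\, (\log L)^{-A}\bigr),
\end{equation*}
where $T$ is the convolution operator associated to the step measure and $P_{\leq L}$ is the projection onto spherical harmonics of degree $\leq L$. The high-frequency tail is absorbed by Sobolev estimates: writing $\phi_R(\omega) = \widehat{\mu\ast\check\mu}(R\omega)$ with $\mu\ast\check\mu$ compactly supported, one has $\|\phi_R\|_{H^k(\mathbb{S}^{d-1})} \lesssim R^k$, whence $\sum_{\ell > L}\|\phi_{R,\ell}\|_{L^2} \lesssim L^{1-k} R^k$ for any integer $k>1$ (polynomial Sobolev-embedding factors from $\mathcal{H}_\ell$ being harmless).

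Assembling the three pieces,
\begin{equation*}
|\widehat{\mu}(\xi)|^2 \lesssim R^{-s_0} + \sqrt{L}\,\exp\!\bigl(-c_0\, n\, (\log L)^{-2}\bigr) + L^{1-k}R^k.
\end{equation*}
Setting $\Lambda := \log |\xi|$ and optimising with $\log L \asymp \Lambda^{1/3}$ and $\log R \asymp \Lambda^{1/3}$ (so that $n \asymp \Lambda$) makes each of the three terms $\lesssim \exp(-c\Lambda^{1/3})$, which is the asserted bound. The exponent $1/3 = 1/(A+1)$ is forced by the three-way balance between the spherical Frostman decay, the $(\log L)^{-A}$ loss in Varj\'u's estimate, and the $R^k$ Sobolev cost; any improvement of $A$ (in particular a true spectral gap for $T$, available in the algebraic case by Bourgain--Gamburd \cite{BourgainGamburd} and Benoist--de Saxc\'e \cite{BenoistSaxce}) would immediately give a strictly better rate. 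The main obstacle in the plan is securing the spectral bound in the stated form for an arbitrary dense $\Gamma$: one cannot appeal to the algebraic spectral gap results and must instead rely on the more delicate representation-theoretic input of \cite{Varju} tailored to generic dense subgroups of $\mathrm{SO}(d)$. Once this is in hand, the remainder of the argument is standard harmonic analysis on the sphere.
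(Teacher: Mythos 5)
Your proposal is correct and follows essentially the same route as the paper: iterate the self-similarity to pass to the averaging operator $T$ on $\mathbb{S}^{d-1}$, control the mean part via Mattila's spherical $L^2$-average and the finite $s_0$-energy of $\mu$, apply Varj\'u's quantitative contraction $\|T^n\|\lesssim\exp(-c_0 n(\log L)^{-A})$ on representations of weight $\leq L$, control the high frequencies by the $O(R)$ smoothness of $\xi\mapsto\widehat{\mu}(\xi)$, and optimise with $\log L\asymp\log R\asymp(\log|\xi|)^{1/3}$. The only differences are cosmetic — you use a hard spectral cutoff $P_{\leq L}$ with Sobolev tail estimates and work with $|\widehat{\mu}|^2$, whereas the paper uses a smooth mollifier $h_r$ from Varj\'u's Lemma 19 together with a Lipschitz/$L^2$ interpolation to pass to $L^\infty$ — and both implementations yield the same $e^{-c(\log|\xi|)^{1/3}}$ bound.
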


\begin{proof}[Proof of Theorem \ref{thm:FourierSelfSimRd}] The idea of the proof of Theorem \ref{thm:FourierSelfSimRd} was communicated to the author by P. Varj\'u.  Write $\xi = R z$ for $R > 1$ and $z \in \mathbb{S}^{d-1}$. Define the Fourier restriction $\widehat{\mu}_{R} : \mathbb{S}^{d-1} \to \C$,
$$\widehat{\mu}_{R}(z) := \widehat{\mu}(R z), \quad z \in \mathbb{S}^{d-1}.$$ 
Let $R_0 := \lambda^{n_0} R$ for some $n_0 \in \N$ we will choose later. Since $\mu = \sum_{j \in \cA} p_j f_j(\mu)$ and $f_j = \lambda \theta_j + b_j$, we have 
$$|\widehat{\mu}_R(z)| = \Big|\sum_{j \in \cA} p_j \int e^{-2\pi i (Rz) \cdot (\lambda \theta_j x + b_j) } \, d\mu(x)\Big| \leq \sum_{j \in \cA} p_j \Big|\int e^{-2\pi i (\lambda R\theta_j^{-1} z) \cdot x } \, d\mu(x) \Big| = T|\widehat{\mu}_{\lambda R}|(z).$$
Thus by iterating this $n_0$ times, we have
\begin{align}\label{eq:selfbound}|\widehat{\mu}(\xi)| = |\widehat{\mu}_R(z)| \leq T^{n_0} |\widehat{\mu}_{R_0}|(z).\end{align}
Note that here we identify $T$ also as an operator on $L^2(\mathbb{S}^{d-1})$ as follows: if $f \in L^2(\mathbb{S}^{d-1})$ and $z \in \mathbb{S}^{d-1}$, then write $Tf(z) := \sum_{j \in \cA} p_j f(\theta_j^{-1} z)$. Since for any $z \in \mathbb{S}^{d-1}$, we can always find $\sigma_z \in \mathrm{SO}(d)$, which maps $1$ to $z$, we have $Tf(z) = T \tilde f (\sigma_z)$, where $\tilde f(g) := f(g 1)$, $g \in \mathrm{SO}(d)$. Moreover, $\|\tilde f\|_{L^2(G)} = \|f\|_{L^2(\mathbb{S}^{d-1})}$ and 
$$\|T^k\tilde f\|_{L^2(G)} = \|T^kf\|_{L^2(\mathbb{S}^{d-1})}$$ 
for any $k \geq 1$.

Now, decompose $L^2(G)$ with respect to the Haar measure $dg$ on $G$ as a direct sum with irreducible representations $V_v < L^2(\mathrm{SO}(d))$ of weight $v$, that is, $L^2(\mathrm{SO}(d)) = \bigoplus_{v} V_v$, where $v=0$ corresponds to constant functions. Let $r \geq 1$, which we will choose later. Then, by Lemma 19 by Varj\'u \cite{Varju3}, there exists a function $h_r \in \bigoplus_{|v| \leq r} V_v$ such that $\int h_r(g) \, dg = 1$, $\|h_r\|_2 \lesssim r^{\dim \mathrm{SO}(d) / 4}$, where $\dim \mathrm{SO}(d) = \frac{d(d-1)}{2}$ and 
$$\int h_R(g) \dist(g,1) \, dg \lesssim_d r^{-1/2}.$$ 
Consider the convolution $|\widehat{\mu}_{R_0}| \ast h_r \in L^2(\mathbb{S}^{d-1})$, defined by
$$ |\widehat{\mu}_{R_0}| \ast h_r(z) := \int |\widehat{\mu}_{R_0}|(g^{-1} z) h_r(g) \, dg, \quad z \in \mathbb{S}^{d-1}.$$
Then we can write
\begin{align}\label{eq:selfbound2}|\widehat{\mu}_{R_0}|(z) =  \int (|\widehat{\mu}_{R_0}| \ast h_r)(v) \, dv + H(z) + F(z),\end{align}
where $H(z) = (|\widehat{\mu}_{R_0}| \ast h_r)(z) - \int (|\widehat{\mu}_{R_0}| \ast h_r)(v) \, dv$ and $F(z) = |\widehat{\mu}_{R_0}|(z) - |\widehat{\mu}_{R_0}| \ast h_r(z)$.  Combining \eqref{eq:selfbound2} with \eqref{eq:selfbound}, we have the bound
\begin{align}\label{eq:selfbound3} |\widehat{\mu}(\xi)| \leq \int (|\widehat{\mu}_{R_0}| \ast h_r)(v) \, dv  + |T^{n_0}H(z)| + |T^{n_0} F(z)|.\end{align}
To finish the argument, we need to bound the constant term and have an $L^\infty$ bound in $z$ of 
$$G(z) := |T^{n_0}H(z)| + |T^{n_0} F(z)|,$$ 
which is possible by an $L^2$ bound combined with a $\mathrm{Lip}$ norm bound, where the Lipschitz norm $\|G\|_{\mathrm{Lip}} := \|G'\|_\infty$.

For the constant term of \eqref{eq:selfbound3}, it was shown by Mattila \cite{Mattila3} (see Lemma 3.15 of \cite{Mattila95}) that for any Borel measure $\tau$ on $\R^d$, we have 
$$\int_{\mathbb{S}^{d-1}} |\widehat{\tau}(r z)|^2 \, dz \lesssim r^{-\delta} I_{\delta}(\tau),$$ 
where $I_\delta(\tau) = \iint |x-y|^{-\delta} \, d\mu(x) \, d\mu(y)$ for any $0 < \delta < (d-1)/2$. Now, for a self-similar measure $\mu$, we know that $I_{\eps_1}(\mu) < \infty$ for some $\eps_1 > 0$, e.g. following the argument of Feng and Lau \cite{FL} from 2009 as $\mu$ has no atoms. Thus by Cauchy-Schwarz
$$\Big| \int (|\widehat{\mu}_{R_0}| \ast h_r)(v) \, dv\Big|  \leq \|\widehat{\mu}_{R_0}\|_2 |\mathbb{S}^{d-1}|^{1/2} \|h_r\|_1 \lesssim R_0^{-\eps_1}.$$

By the $1$-Lipschitz continuity of the exponential, we have:
$$\Big\||\widehat{\mu}_{R_0}| - |\widehat{\mu}_{R_0}| \ast h_r \Big\|_2 \lesssim_d  R_0 r^{-1/2} \quad \text{and} \quad \Big\||\widehat{\mu}_{R_0}| - |\widehat{\mu}_{R_0}| \ast h_r\Big\|_{\mathrm{Lip}} \lesssim_d  R_0.$$
Since $\|H + F\|_{\mathrm{Lip}} \lesssim R_0$ and that $T$ does not increase $\|\cdot\|_{\mathrm{Lip}}$, we have $\|G\|_{\mathrm{Lip}} = \|T^{n_0}H + T^{n_0 }F\|_{\mathrm{Lip}} \lesssim R_0$.

For the $L^2$ norm of $G$, since 
$$\int h_R(g) \dist(g,1) \, dg \lesssim_d r^{-1/2},$$ 
using $1$-Lipschitz continuity we have $\|F\|_2 \lesssim R_0 r^{-1/2}$. Thus by triangle inequality 
$$\|T^{n_0} F \|_2 \leq \|F\|_2 \leq R_0 r^{-1/2}.$$

Next, as $h_r \in \bigoplus_{|v| \leq r} V_v$, $\int H = 0$, and as $H$ comes from a convolution with $h_r$, we can use the contraction of $T^n$ in $L^2_0(\mathrm{SO}(d))$ that depends on $r$, i.e. Theorem 6 of Varj\'u \cite{Varju3} from 2013 implying for some $c_0,c_0' > 0$ and $0 < A \leq 2$ such that
$$\|T^{n_0} H\|_2 \lesssim \Big(1-\frac{c_0}{ \log^{A} (c_0' r + 2)}\Big)^{n_0}\|H\|_2 \leq e^{-\frac{n_0 c_0}{ \log^{2} (c_0' r + 2)} }\|H\|_2.$$
Here by the Cauchy-Schwarz inequality, we have by the $L^2$ decay on spheres (like we argued for the constant term in \eqref{eq:selfbound3}) that $\|H\|_2 \lesssim R_0^{-\eps_1}$.  

Having bounds for $\|G\|_{\mathrm{Lip}}$ and $\|G\|_{2}$, we obtain $L^\infty$ bound:
$$\|G\|_\infty \lesssim \|G\|_{\mathrm{Lip}}^{1/3}  \|G\|_2^{2/3} \lesssim R_0^{1/3}  (e^{-\frac{n_0 c_0}{ \log^{2} (c_0' r + 2)} }R_0^{-\eps_1} + R_0 r^{-1/2})^{2/3}.$$
Choose now constant $c_1 > 0$ possibly depending on $\lambda$ such that when $R_0 = e^{c_1 (\log R)^{1/3}}$ (which also fixes $n_0$ as $\lambda^{n_0} R = R_0$) and $r = e^{10 c_1(\log R)^{1/3}}$ we have
$$R_0^{-\eps_1} + R_0^{1/3}  (e^{-\frac{n_0 c_0}{ \log^{2} (c_0' r + 2)} }R_0^{-\eps_1} + R_0 r^{-1/2})^{2/3} \leq e^{-c(\log R)^{1/3}}$$ 
for some constant $c > 0$. Thus the proof is complete.
\end{proof}

\begin{remark}Using a consequence of Theorem \ref{thm:FourierSelfSimRd} combined with a Davenport-Erd\"os-LeVeque criterion in $\R^d$ (similar proof to the one in $\R$ \cite{DEL}), one can prove that given an expanding toral endomorphism $A$, then $\{A^n x : n \in \N\}$ is equidistributed modulo $\mathbb{T}^d$ for $\mu$ almost every $x \in \R^d$ for any non-trivial self-similar measure associated to $\Phi$ with dense rotations. Compare this to the work of Dayan-Ganguly-Weiss \cite{DGW}, which established a similar result when the translations $b_j$ satisfy an irrationality condition using homogeneous dynamics. For the application of Theorem \ref{thm:FourierSelfSimRd} no irrationality condition is needed, but instead only on the rotation parts.
\end{remark}

Inspecting the proof of Theorem \ref{thm:FourierSelfSimRd} also leads to improved Fourier decay bound if we can guarantee more rapid contraction to $\|T^{n}\|$ as $n$ grows. This is possible if $T$ has a spectral gap, which would lead to a bound $\|T^{n_0} H\|_2 \lesssim \rho^{n_0}\|H\|_2$ in the proof of Theorem \ref{thm:FourierSelfSimRd} for some fixed constant $\rho > 0$, which allows one to get:

\begin{theorem}\label{thm:SpecGapFourierSelfSimRd}
Assume $d \geq 3$. Let $\mu$ be a self-similar measure on $\R^d$ for a probability vector $0 < p_j < 1$, $\sum_{j \in \cA} p_j = 1$ and the IFS $\Phi = \{f_j = \lambda \theta_j  + b_j\}$ with same contraction ratio $0 < \lambda < 1$ such that none of the pairs of the maps $f_j$ have a common fixed point. Suppose the transfer operator $T$ associated to the rotations $\{\theta_j : j \in \cA\} \subset \mathrm{SO}(d)$ has a spectral gap. Then there exists a constant $\alpha > 0$ such that
$$|\widehat{\mu}(\xi)| = O(|\xi|^{-\alpha}), \quad \text{as } |\xi| \to \infty.$$
\end{theorem}

As we discussed earlier, Theorem \ref{thm:SpecGapFourierSelfSimRd} was implicitely done in \cite{LV} as long as the contraction ratios are large enough. As with \cite{LV}, it is also likely that Theorem \ref{thm:SpecGapFourierSelfSimRd} holds for non-equicontractive IFSs. Thus if $\{\theta_j : j \in \cA\} \subset \mathrm{SO}(d)$, $d \geq 3$, form a dense subgroup, combined with the works of Bourgain, Gamburd, Benoist and de Saxc\'e \cite{BourgainGamburd,BenoistSaxce}, Theorem \ref{thm:SpecGapFourierSelfSimRd} implies polynomial Fourier decay for all non-trivial self-similar measures associated to the IFS $\Phi = \{f_j = \lambda \theta_j  + b_j\}$ as long as $f_j$ do not have a common fixed point.

Proving polynomial Fourier decay for self-similar measures in $\R^d$ in the general case is probably equally difficult as proving the spectral gap for $T$ under dense rotations:

\begin{problem}\label{prob:selfsimhigher}
Prove that if the rotations of the IFS $\Phi$ form a dense subgroup on $\mathrm{SO}(d)$, then all non-trivial self-similar measures associated to $\Phi$ have polynomial Fourier decay.
\end{problem}

As Problem \ref{prob:selfsimhigher} remains open, given the success of the parametrised approach in the Bernoulli convolution and general self-similar measures case, one could ask if the polynomial Fourier decay for self-similar measures would hold for \textit{almost every} choice of parameters? 

\begin{problem}\label{prob:selfsimhigher2}
Prove that for Haar$^{\cA}$ almost every choice of rotations $(\theta_j : j \in \cA) \in \mathrm{SO}(d)^\cA$, non-trivial self-similar measures on $\R^d$ are Rajchman measures with polynomial Fourier decay.
\end{problem}

Now comparing to the results of Erd\H{o}s and Salem (Theorem \ref{thm:erdossalem}), and Li and the author \cite{JialunSahlsten1} (Theorem \ref{thm:lisahlsten}) and Br\'emont \cite{Bremont} (Theorem \ref{thm:bremont}), would it be possible to provide an algebraic characterisation of self-similar measures in a similar fashion? This was addressed by Rapaport \cite{Rapaport}. Therefore, a possible characterisation is interesting in the affinely irreducible case, which Rapaport considered in \cite{Rapaport} in 2022:

\begin{theorem}\label{thm:rapaport}
Any self-similar measure $\mu$ on $\R^d$ associated to an affinely irreducible IFS $\Phi = \{r_j \theta_j + b_j : j \in \cA\}$ is Rajchman if and only if the following condition does not hold: there exists a subspace $V \subset \R^d$ with $d' = \dim V > 0$ such that $\theta_j (V) = V$ for all $j \in \cA$ and an isometry $S : V \to\R^{d'}$ such that
\begin{itemize}
\item[(1)] If $U_j' \in O(d')$ and $a_j' \in \R^{d'}$ are defined such that 
$$S \circ \pi_V \circ \phi_j \circ S^{-1} = r_j U_j' + b_j'$$ 
where $\pi_V$ is the orthogonal projection onto $V$, and $H \subset \mathrm{GL}(d',\R)$ is the subgroup generated by matrices $r_j U_j'$, $j \in \cA$, then $N := H \cap O(d')$ is finite normal subgroup of $H$ and $H / N$ is cyclic.
\item[(2)] If $A \in H$ is contraction such that $\{A^n N : n\in \Z\} = H / N$, then there exists $k \geq 1$, $z_1,\dots,z_k \in \C$ and $\zeta_1,\dots,\zeta_k \in \C^{d'} \setminus \{0\}$ such that
\begin{itemize}
\item[(i)] $\{z_1,\dots,z_k\}$ is a P.V. $k$-tuple, that is, $z_i$, $i = 1,\dots,k$, are distinct complex numbers outside the closed unit disk that are roots of an integral monic polynomial whose other roots are all inside the open unit disk;
\item[(ii)] $A^{-1} \zeta_i = z_i \zeta_i$ for $1 \leq i \leq k$;
\item[(iii)] if $j \in \cA$ and $V \in N$, there is a polynomial $P_{j,V}$ of rational coefficients such that $\langle V b_j' , \zeta_i \rangle = P_{j,V}(z_i)$ for $1 \leq i \leq k$.
\end{itemize}
\end{itemize}
\end{theorem}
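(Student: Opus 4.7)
The theorem is an if-and-only-if characterization, so the plan is to prove the two directions separately. The forward direction, that the algebraic obstruction forces failure of the Rajchman property, is essentially a construction of obstructing frequencies via an Erd\H{o}s type argument; I would treat it first. The reverse direction, that a non-Rajchman self-similar measure must exhibit the cyclic-modulo-finite linear structure together with the Pisot tuple and rationality conditions, is where the bulk of the work lies.

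\textbf{From the algebraic obstruction to non-decay.} Suppose conditions (1)--(2) hold. The invariance $\theta_j(V) = V$ implies that the orthogonal projection $\pi_V$ intertwines the IFS with the induced system $\{r_j U_j' + b_j' : j \in \cA\}$ on $V \cong \R^{d'}$, and $\pi_V \mu$ is a self-similar measure for this induced IFS. Since $\widehat{\mu}(\xi) = \widehat{\pi_V \mu}(\xi)$ for $\xi \in V$, it suffices to exhibit a sequence $\xi_n \in V$ with $|\xi_n| \to \infty$ and $|\widehat{\pi_V \mu}(\xi_n)| \geq c > 0$. Iterating $\mu = \sum_j p_j f_j \mu$ to depth $n$ yields a representation of $\widehat{\pi_V\mu}(\xi)$ as a weighted exponential sum over words $\a \in \cA^n$ with phases $\langle \xi, b_\a' \rangle$ and residual Fourier factors evaluated at $(r_\a U_\a')^* \xi$. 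I would choose $\xi_n$ in the complex span of the dual eigenvectors $\zeta_i$ of the contracting generator $A^{-1}$ with eigenvalues $z_i$, aligning $\xi_n$ with $A^n$-orbits modulo the finite normal subgroup $N$. The Pisot hypothesis (2)(i) forces the phases $\langle A^{-m}\zeta_i, b_j'\rangle = P_{j,V}(z_i)$ in (2)(iii) to lie within summable distance of $\Z$ for $m$ large, which is exactly what the classical Erd\H{o}s product computation needs in order to conclude that the resulting infinite product does not vanish along $\xi_n$.

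\textbf{From non-decay to the algebraic obstruction.} Assume $\mu$ is not Rajchman, so there exist $\xi_n$ with $|\xi_n| \to \infty$ and $|\widehat{\mu}(\xi_n)| \geq c > 0$. The first step is to find the subspace $V$. Iterating the self-similarity relation gives $|\widehat{\mu}(\xi_n)| \leq \sum_{\a \in \cA^k} p_\a |\widehat{\mu}((r_\a \theta_\a)^* \xi_n)|$, so non-decay propagates to many images of $\xi_n$ under the dual semigroup action. Passing to accumulation points of $\xi_n/|\xi_n|$ on the sphere and using affine irreducibility, one extracts a minimal subspace $V$ invariant under all $\theta_j$ such that the induced IFS on $V$ inherits the non-Rajchman property. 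The second step is to rule out the dense-rotation case for the induced linear group $H \subset \mathrm{GL}(d', \R)$: here I would invoke Theorem \ref{thm:FourierSelfSimRd} (or its spectral-gap strengthening Theorem \ref{thm:SpecGapFourierSelfSimRd}) applied to the rotational part $H \cap O(d')$, concluding that unless $N := H \cap O(d')$ is finite normal with $H/N$ cyclic, the Fourier transform of $\pi_V \mu$ would already decay subpolynomially, a contradiction. This gives (1). The third step extracts (2): the contracting generator $A$ of $H/N$ must act on the surviving Fourier frequencies $\xi_n$ in a way that preserves the lower bound $|\widehat{\mu}(\xi_n)| \geq c$, and a careful accounting of the phases $\exp(-2\pi i \langle \xi_n, b_\a'\rangle)$ as words $\a$ are iterated shows that the eigenvalues $z_i$ of $A^{-1}$ must be algebraic integers with all other Galois conjugates inside the unit disc (a P.V. tuple), and the translations $b_j'$ must pair with the $\zeta_i$ to yield polynomial values $P_{j,V}(z_i)$ in $\Q(z_1,\dots,z_k)$, which is precisely condition (2)(iii).

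\textbf{Main obstacle.} The hardest part is the simultaneous extraction, in the reverse direction, of the cyclic-modulo-finite structure of $H$ and the Pisot-tuple-with-rational-translations structure on the cyclic part. Ruling out dense-rotation behaviour requires an effective quantitative input on $\mathrm{SO}(d')$ of the flavour used in the proof of Theorem \ref{thm:FourierSelfSimRd}, and the Pisot and rationality conclusions require a non-commutative analogue of Br\'emont's clearing-of-denominators argument from Theorem \ref{thm:bremont}, tracking orbits of the $\xi_n$ through both the linear action and the translation part of the affine group. Performing these two reductions coherently --- so that the subspace $V$ found in the first step is the same one on which the algebraic structure is eventually realized --- is the technical heart of the proof.
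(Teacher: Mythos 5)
First, a point of reference: the survey states Theorem \ref{thm:rapaport} as a quoted result of Rapaport \cite{Rapaport} and supplies no proof, so there is no in-paper argument to compare yours against line by line; your proposal has to stand on its own. Your forward direction has roughly the right shape (project onto $V$, note that $\pi_V\mu$ is self-similar for the induced system, and run an Erd\H{o}s-type infinite-product computation along frequencies built from the dual eigenvectors $\zeta_i$, using (2)(i) and (2)(iii) to make the distances of the phases to $\Z$ square-summable), although the averaging over the finite group $N$ and the passage from a $k$-tuple of eigenvalues to a single convergent product are nontrivial details you have suppressed.

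The genuine gap is in the reverse direction, at the step where you rule out condition (1) failing. You propose to invoke Theorem \ref{thm:FourierSelfSimRd} (or \ref{thm:SpecGapFourierSelfSimRd}) "applied to the rotational part $H\cap O(d')$". This is the wrong tool for the dichotomy being claimed. Those theorems require $d\geq 3$, dense rotations in the full group $\mathrm{SO}(d)$, and (for the spectral-gap version) input that is precisely the content of the open Problem \ref{prob:selfsimhigher}; none of this is available here. More importantly, the alternative to (1) is not "the rotations are dense": condition (1) concerns the group $H$ generated by the \emph{similarities} $r_jU_j'$, and it fails already in completely rotation-free situations — e.g.\ $d'=2$, all $U_j'=\mathrm{id}$, and $\log r_1/\log r_2$ irrational, where $N$ is trivial and $H\cong$ the free abelian group on the $\log r_j$ is not cyclic. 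In that regime the Rajchman property comes from the non-lattice/renewal-theoretic mechanism of Theorem \ref{thm:lisahlsten} (and its $\R^d$ analogues such as Theorem \ref{thm:LiSahlstenRd}), i.e.\ equidistribution of the scaling cocycle modulo the compact part, not from any spectral gap on $\mathrm{SO}(d')$. So the correct case analysis is: if $H$ modulo its maximal compact normal subgroup is non-discrete (non-lattice), conclude Rajchman by a renewal/equidistribution argument; only in the lattice (cyclic) case does one descend to a Br\'emont-style clearing-of-denominators analysis that produces the P.V.\ tuple and the rationality of $\langle Vb_j',\zeta_i\rangle$. As written, your argument does not cover $d'\in\{1,2\}$, does not cover non-cyclic $H$ with trivial rotation part, and leans on a spectral-gap statement that is conjectural in exactly the generality you would need. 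Separately, the extraction of the invariant subspace $V$ from accumulation points of $\xi_n/|\xi_n|$ needs an actual invariance argument (the set of non-decaying directions must be shown to be stable under the dual action of the $\theta_j$); a limit point alone does not produce a $\theta_j$-invariant subspace.
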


This gives for example a corollary that any non-Rajchman measure $\mu$ for an affinely irreducible self-similar IFS $\Phi = \{r_j \theta_j + b_j : j \in \cA\}$ implies every contraction is of the form $r_j = \lambda^{n_j}$ where $n_j \geq 1$ is an integer and $\lambda^{-1} > 1$ is an algebraic integer as a member of a P.V. $k$-tuple.

If we go beyond the self-similar case to self-affine systems that are properly different from self-similar ones, it turns out that the analogous spectral gap bounds can be achieved. This will allow us to overcome the issues in the self-similar case and prove polynomial Fourier decay in these settings. However, algebraic characterisations like the one by Rapaport in Theorem \ref{thm:rapaport} become more unclear. These will be the theme of the next section.

\subsection{Self-affine measures and random walks on affine groups}

Let us now consider a general self-affine IFS $\Phi$, that is, we have
$$\Phi = \{f_j = A_j + b_j : j \in \cA\}$$
where $A_j \in \mathrm{GL}(d,\R)$, $\|A_j\| < 1$ for all $j \in \cA$ and self-affine measure $\mu = \sum_{j \in \cA} p_j f_j(\mu)$ for some $0 < p_j < 1$ with $\sum_{j \in \cA} p_j = 1$. Write $\Gamma$ as the subgroup generated by the linear parts $A_j$, $j \in \cA$. If $\Gamma$ is \textit{proximal}, that is, we can find $A \in \Gamma$ such that $A$ has only one eigenvalue $\lambda_A$ such that $\lambda_A$ is simple and $|\lambda_A|$ is the largest amongst all the eigenvalues of $A$. If proximality fails, the closure $\overline{\Gamma}$ is compact, so we can conjugate $\Phi$ to a self-similar IFS on $\R^d$. In particular, the study of Fourier transforms of self-affine measures for non-proximal $\Phi$ reduces to the self-similar case.

In the proximal case, the \textit{irreducibility} properties of $\Gamma$ become essential in the study of the Fourier decay. We say that $\Gamma$ is \textit{totally irreducible} if one cannot construct a finite number of non-empty proper subspaces of $\R^d$ such that $\Gamma$ would fix them. Adapting a renewal theoretic approach as in the self-similar case \cite{JialunSahlsten1}, Li and the author \cite{JialunSahlsten2} in 2019 proved the Rajchman property for self-affine measures in this case:

\begin{theorem}\label{thm:LiSahlstenRd}
Suppose the subgroup $\Gamma$ generated by the linear parts $A_j$, $j \in \cA$, of a self-affine IFS $\Phi = \{A_j + b_j : j \in \cA\}$ is proximal and totally irreducible. Then the Fourier transform of every non-trivial self-affine measure $\mu$ associated to $\Phi$ is Rajchman.
\end{theorem}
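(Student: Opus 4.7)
The plan is to adapt the renewal-theoretic strategy behind the proof of Theorem \ref{thm:lisahlsten} to the self-affine setting, replacing the scalar walk $\sum_k X_k$ of log-contractions by the log-norm cocycle $\log \|A^T_{a_n}\cdots A^T_{a_1} \eta\|$ of the random walk of matrices $A^T_j$ acting on projective space $\mathbb{P}^{d-1}$. The proximality and total irreducibility hypotheses are precisely what is needed to invoke Furstenberg theory and a renewal theorem on $\mathbb{P}^{d-1}$ in the style of Kesten and Guivarc'h--Le Page.

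The first step is to iterate the self-affine relation $\hat\mu(\xi) = \sum_j p_j e^{-2\pi i \xi\cdot b_j}\hat\mu(A_j^T \xi)$. Writing $\xi = e^t \eta$ with $\eta \in \mathbb{S}^{d-1}$ and $t\gg 1$, I would then introduce, as in \eqref{eq:FourierBoundStop}, the stopping time $n_t = n_t(\mathbf{a},\eta)$ defined as the first $n$ for which $\|A^T_{\mathbf{a}|_n}\eta\| < e^{-t}$, and replace the iteration at fixed depth $n$ by a sum over words whose overshoot $s_\mathbf{a} := t + \log\|A^T_{\mathbf{a}|_{n_t}}\eta\|$ is bounded. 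Cauchy--Schwarz combined with splitting the integration against $\mu\otimes\mu$ into $\{|x-y|\leq \delta\}$ and its complement (using that any non-trivial self-affine measure for a proximal totally irreducible $\Gamma$ is non-atomic and has a uniform Frostman exponent) reduces the task to bounding
\begin{equation*}
\iint\limits_{|x-y|>\delta}\E_t\Big[ e^{-2\pi i e^{s_\mathbf{a}}\,\sigma_\mathbf{a}(\eta)\cdot (x-y)}\Big]\,d\mu(x)\,d\mu(y) + \delta^{\varepsilon_1},
\end{equation*}
where $\sigma_\mathbf{a}(\eta) = A^T_{\mathbf{a}|_{n_t}}\eta/\|A^T_{\mathbf{a}|_{n_t}}\eta\|\in \mathbb{S}^{d-1}$ and $\E_t$ is the weighted expectation on words determined by the stopping time.

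The heart of the argument is to identify the $t\to\infty$ limit of the inner expectation as a renewal operator on $\mathbb{P}^{d-1}$. Under proximality and total irreducibility of $\Gamma$, Furstenberg's theorem yields a unique stationary measure $\nu_F$ on $\mathbb{P}^{d-1}$, a positive top Lyapunov exponent, and --- crucially --- non-arithmeticity of the log-norm cocycle. The Guivarc'h--Le Page renewal theorem then produces a limit joint law $\lambda\otimes \nu_F$ for $(s_\mathbf{a},\sigma_\mathbf{a}(\eta))$ which is absolutely continuous in the $s$-variable. Passing to the limit inside the integral (after a standard approximation allowing $t,\delta$ to depend mildly on $\xi$), the expression reduces to the oscillatory integral $\int e^{-2\pi i e^s \sigma\cdot(x-y)}\,d\lambda(s)\,d\nu_F(\sigma)$, which is a Fourier--Stieltjes transform of a pushforward of an absolutely continuous measure, evaluated at frequency $\sigma\cdot(x-y)$. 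Total irreducibility guarantees that $\nu_F$ is not supported on any proper hyperplane, so for $|x-y|>\delta$ the set $\{\sigma : |\sigma\cdot(x-y)|>\delta^2\}$ has positive $\nu_F$-mass, and the Riemann--Lebesgue lemma forces the integral to $0$. Optimising $\delta=\delta(\xi)$ and $t=t(\xi)$ yields $\hat\mu(\xi)\to 0$ as $|\xi|\to\infty$.

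The main obstacle compared to the one-dimensional self-similar case \eqref{eq:FourierBoundStop} is that the renewal theorem must be applied uniformly in the starting direction $\eta\in\mathbb{S}^{d-1}$ of the projective walk, which in turn requires quantitative spectral information for the transfer operator of the random walk on $\mathbb{P}^{d-1}$ (Le Page's theorem and its sharpenings). A secondary subtlety is handling the case where $\sigma\cdot(x-y)$ can degenerate: this must be excluded using total irreducibility of $\Gamma$ rather than a naive lower bound, since in the affine setting the projection onto $\sigma$ can kill the displacement $x-y$ along directions of the spectrum of the cocycle. Ensuring non-atomicity and a Frostman bound for $\mu$ in the possibly overlapping self-affine regime is the remaining non-trivial ingredient.
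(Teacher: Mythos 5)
Your outline is essentially the proof strategy of \cite{JialunSahlsten2} as described in the paper: iterate the self-affine relation, apply Cauchy--Schwarz with a stopping time on the log-norm cocycle, and invoke a renewal theorem for the random walk on $\mathbb{S}^{d-1}$ (extending Kesten/Guivarc'h--Le Page), with proximality and total irreducibility supplying the Furstenberg measure, non-arithmeticity, and the non-degeneracy of $\sigma\cdot(x-y)$ needed before applying Riemann--Lebesgue. The obstacles you flag --- uniformity in the starting direction and the Frostman/non-atomicity input --- are precisely the points the original paper has to work for, so the proposal matches the paper's approach.
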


Theorem \ref{thm:LiSahlstenRd} was heavily motivated by Li's earlier work \cite{Li2} from 2018 on random walks in the split semisimple Lie groups, where Li studied the Fourier decay properties of stationary measures associated to the random walks. Here we needed to study the renewal theorem for the random walks acting on the sphere $\mathbb{S}^{d-1}$ so one had to extend the renewal theorems to such settings.

To get polynomial Fourier decay, we need a rate in the renewal theorem for random walks on $\mathbb{S}^{d-1}$, where, as in Li's work \cite{Li2}, we analyse $\Gamma$ more precisely in the \textit{Zariski topology}. Zariski topology is a natural topology for subsets of $\mathrm{GL}(d,\R)$ where closed subsets are algebraic subvarieties in a finite dimensional vector space, see \cite[Section 5]{BQ} for a definition. In \cite{Li2}, the following result was obtained in the general setting that gives a condition for polynomial Fourier decay of self-affine measures using the Zariski closure of $\Gamma$:

\begin{theorem}\label{thm:LiSahlstenRdpower}
Suppose the Zariski closure of $\Gamma$ is a connected and $\R$-splitting reductive group acting irreducibly on $\R^d$, where $\Gamma$ is the subgroup generated by the linear parts $A_j$, $j \in \cA$, of a self-affine IFS $\Phi = \{f_j = A_j + b_j : j \in \cA\}$ such that any pair of the maps $f_j$ do not have a common fixed point. Then the Fourier transform of every non-trivial self-affine measure $\mu$ associated to $\Phi$ is Rajchman with polynomial Fourier decay.
\end{theorem}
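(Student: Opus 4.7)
The plan is to adapt the renewal theoretic framework underlying Theorem \ref{thm:LiSahlstenRd} in the spirit of Li's approach \cite{Li2} and upgrade qualitative renewal convergence to a quantitative exponential rate. Writing $\xi = e^t z$ with $z \in \mathbb{S}^{d-1}$ and $t > 0$, one iterates the self-affinity relation $\mu = \sum_{j \in \cA} p_j f_j \mu$, applies Cauchy--Schwarz, and cuts off according to a stopping time based on the norm cocycle of the dual random walk: set $n_t(\omega)$ to be the first index $n$ for which $\log \|A_{\omega_1}^{-1} \cdots A_{\omega_n}^{-1}\| \geq t$. Because $\Gamma$ is proximal and totally irreducible and the $f_j$ share no common fixed point, the Guivarc'h--Furstenberg--Benoist--Quint theory \cite{BQ} gives non-atomicity of $\mu$ together with a Frostman bound $\mu \times \mu\{(x,y) : |x-y| \le \delta\} \lesssim \delta^{\eps_1}$, so one arrives at an estimate
\[
|\widehat{\mu}(\xi)|^2 \lesssim \iint_{|x-y|>\delta} \E_t\bigl[g_{t,x,y}(S_{n_t}-t,\, X_{n_t} z)\bigr] \, d\mu(x)\, d\mu(y) + \delta^{\eps_1},
\]
where $S_n = \log\|A_{\omega_1}^{-1}\cdots A_{\omega_n}^{-1}\|$, $X_n = A_{\omega_1}^{-1}\cdots A_{\omega_n}^{-1}$ acts projectively on $\mathbb{S}^{d-1}$, and $g_{t,x,y}$ is an oscillatory test function of modulus at most one whose effective frequency is $|\xi|\cdot|x-y|$.

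The next step is a quantitative renewal theorem for the stopped cocycle on $\mathbb{S}^{d-1}$. Under the hypothesis that the Zariski closure of $\Gamma$ is connected, $\R$-split, reductive and irreducible on $\R^d$, Benoist--Quint's large deviation and regularity theory for the Iwasawa cocycle, together with a higher rank Dolgopyat-type cancellation estimate adapted by Li \cite{Li2}, produces a uniform spectral gap for the family of complex transfer operators
\[
P_{is} f(z) = \sum_{j \in \cA} p_j e^{is\,\sigma(A_j^{-1},z)} f(A_j^{-1} z), \qquad s \in \R,
\]
acting on a suitable H\"older space on $\mathbb{S}^{d-1}$, uniform in real $s$ in bounded strips. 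Fourier--Laplace inverting the renewal equation with this spectral gap yields an exponentially fast convergence of $\E_t$ to a limit operator built from an absolutely continuous measure on $\R$ with smooth density, with error $O(e^{-\kappa t})$ for some $\kappa > 0$ independent of $(x,y,z)$.

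Putting the pieces together, integrating the smooth limiting density against the phase in $g_{t,x,y}$ produces the Fourier transform of a fixed Schwartz function at frequency of order $|\xi|(x-y)$, which decays polynomially as long as $|x-y| > \delta$. Choosing $\delta = |\xi|^{-\tau}$ with $\tau$ sufficiently small and balancing the exponential renewal error $e^{-\kappa t} \sim |\xi|^{-\kappa}$ against the Frostman term $\delta^{\eps_1}$ and the polynomial decay of the limiting oscillatory integral then gives $|\widehat{\mu}(\xi)| \lesssim |\xi|^{-\alpha}$ for some $\alpha > 0$.

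The main obstacle is the uniform spectral gap of the complex transfer operators $P_{is}$ at large frequency parameter $s$, which is what drives the exponential rate in the renewal theorem. This is where the precise algebraic hypothesis on the Zariski closure enters crucially: the $\R$-split and reductive structure allows one to control the Iwasawa cocycle and its fluctuations simultaneously along all simple root directions, enabling a Dolgopyat-type oscillation estimate to close the argument. Removing any one of these structural assumptions (for instance the $\R$-splitting) would require genuinely new ideas to handle the compact part of the Cartan decomposition, and is, I would expect, the principal bottleneck both for the proof and for any potential extension of Theorem \ref{thm:LiSahlstenRdpower} beyond the stated setting.
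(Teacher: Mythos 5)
Your overall architecture --- iterate the self-affinity relation, apply Cauchy--Schwarz, stop according to the norm cocycle of the random walk, use non-atomicity to get the Frostman term, and then feed a quantitative renewal theorem with exponential error rate (driven by a uniform spectral gap for the complex transfer operators) into the oscillatory integral --- is exactly the skeleton of Li's argument \cite{Li2} on which this theorem rests, and it matches the reduction described in the text. The gap is in how you propose to obtain the uniform spectral gap of $P_{is}$ for large $|s|$, which is the engine of the whole proof. You attribute it to Benoist--Quint regularity of the Iwasawa cocycle plus ``a higher rank Dolgopyat-type cancellation estimate,'' and you locate the role of the $\R$-split hypothesis in controlling the cocycle simultaneously along all simple root directions. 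That is not where the hypothesis bites, and a Dolgopyat-type oscillation argument alone does not close this step.

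In this setting the contraction of the twisted operators at high frequency is deduced from a \emph{polynomial Fourier decay theorem for the stationary measures of the random walk itself}, proved by generalising the Bourgain--Dyatlov additive-combinatorics scheme \cite{BD1}. The essential higher-dimensional input is the discretised multiplicative-convolution estimate of Theorem \ref{thm:discretisedRd} (Li \cite{JialunRd}), whose proof relies on the sum-product theorem on real Lie groups of He and de Saxc\'e \cite{HdS18}; the connected, $\R$-split, reductive Zariski closure is used precisely to verify the projective non-concentration hypothesis of that theorem. Without this sum-product input your spectral gap claim is unsupported: a purely Dolgopyat-style argument would be expected to yield at best bounds depending on regularity constants of the measure (compare the contrast between \cite{DJ2} and \cite{BD1} discussed in Section \ref{sec:selfcon}), rather than the gap, uniform in $s$, that is needed for the exponential error rate in the renewal theorem and hence for polynomial decay.
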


See Definition 3.6 in \cite{Li2} for the definition of $\R$-splitting reductive groups. The proof to achieve polynomial Fourier decay in Theorem \ref{thm:LiSahlstenRdpower} reduces by the renewal theoretic argument to having \textit{exponential error} rate in the renewal theorem for random walks on $\mathbb{S}^{d-1}$. Proving this error rate in Theorem \ref{thm:LiSahlstenRdpower}, as in Li's work for random walks on reductive Lie groups, is ultimately based on a uniform spectral gap theorem for these random walks. To then prove these uniform spectral gap results, one needs a polynomial Fourier decay result for stationary measures for the random walks, which in turn are based on generalising the argument by Bourgain and Dyatlov \cite{BD1} done by Li \cite{Li2} by adapting ideas due to Dolgopyat \cite{Dolgopyat,Dolgopyat1} from 1998. Here in particular one needs a higher dimensional version of Theorem \ref{thm:discretised}, which replaces the non-concentration condition by a projective version proved by Li in \cite{JialunRd} from 2018 for the ring $\R^d$ with the product given by  $ab = (a_1b_1,\dots,a_d b_d)$, $a,b \in \R^d$:

\begin{theorem}\label{thm:discretisedRd}
For all $\kappa > 0$, there exists $\eps_1,\eps_2 > 0$ and $k \in \N$ such that the following holds. Let $\mu$ be any probability measure on $[1/2,1]^d$ and $N \in \N$ large enough. Assume $\mu(x : v\cdot x \in B(y,\rho)) \lesssim \rho^{\kappa}$ for all $\rho \in [1/N,1/N^{\eps_1}]$, $y \in \R$ and $v \in \mathbb{S}^{d-1}$. Then for all $\xi \in \R$, $|\xi| \approx N$:
$$\Big|\int e^{2\pi i \xi \cdot x_1 x_2 \dots x_k} \, d\mu(x_1) \dots d\mu(x_k)\Big| \lesssim |\xi|^{-\eps_2}.$$
\end{theorem}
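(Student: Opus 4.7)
\textbf{Proof proposal for Theorem \ref{thm:discretisedRd}.}

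The plan is to lift the iterative scheme sketched for Theorem \ref{thm:discretised} to the commutative ring $(\R^d, +, \odot)$ with componentwise product $x \odot y = (x_1 y_1, \ldots, x_d y_d)$, viewing $\mu^{\odot k}$ as the $k$-fold self-convolution of $\mu$ under $\odot$. The heart of the argument is an $L^r$ decay estimate for $\widehat{\mu^{\odot k}}$, combined with a higher-dimensional discretised sum-product theorem. First, I would use the identity $\widehat{\mu^{\odot(k+1)}}(\xi) = \int \widehat{\mu^{\odot k}}(\eta \odot \xi)\, d\mu(\eta)$, which follows from the symmetry $\xi\cdot(\eta\odot y) = (\xi\odot\eta)\cdot y$, together with a smooth mollifier $\tau_N$ at scale $1/N$. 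The projective non-concentration hypothesis, applied to any direction, implies $\mu(B(y,\rho)) \lesssim \rho^\kappa$ for all $y \in \R^d$, hence $\|\tau_N * \mu\|_\infty \lesssim N^{d-\kappa}$. For integers $r \ge 1$ and $|\xi| \approx N$, H\"older then yields
\begin{align*}
|\widehat{\mu^{\odot(k+1)}}(\xi)|^r \lesssim \int |\widehat{\mu^{\odot k}}(\eta \odot \xi)|^r\, (\tau_N * \mu)(\eta)\, d\eta,
\end{align*}
and integration by parts in $\eta$ extracts the required factor $|\xi|^{-\eps_2}$ provided
\begin{align*}
\int_{|\eta| \lesssim N} |\widehat{\mu^{\odot k}}(\eta)|^r\, d\eta \le N^{d-\kappa/2}.
\end{align*}
The target is thus to find $k = k(\kappa,d)$ and $r = r(\kappa,d)$ for which the latter bound holds.

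Second, as in the proof of Theorem \ref{thm:discretised}, I would define $\sigma_{k,r}$ by $\int_{|\eta|\le N}|\widehat{\mu^{\odot k}}(\eta)|^r\, d\eta = N^{\sigma_{k,r}}$ and aim for the iterative $\eps$-gain $\sigma_{2k,\bar r} \le \sigma_{k,2r} - \eps_1/2$ for some $\bar r \approx c r^2$. Iterating in $k$ and $r$ eventually forces $\sigma_{k,2r}$ below the threshold $d-\kappa/2$ needed above. Expanding the $L^{2r}$ norm as an $L^2$ norm of $\nu_{k,r} := \tfrac{1}{2}[(\mu^{\odot k}*\mu^{\odot k}_-)^{*r} + (\mu^{\odot 2k}*\mu^{\odot 2k}_-)^{*r}]$, the $\eps$-gain reduces to the bilinear estimate
\begin{align*}
\int_{|\eta|\le N}\!\int |\widehat{\nu}_{k,r}(\eta)|^2\, |\widehat{\nu}_{k,r}(y\odot \eta)|^2\, d\nu_{k,r}^{*2}(y)\, d\eta \le N^{-\eps_1}\!\int_{|\eta|\le N}|\widehat{\nu}_{k,r}(\eta)|^2\, d\eta,
\end{align*}
where $*$ denotes additive convolution on $\R^d$.

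Third, and this is where the main obstacle lies, the failure of this bilinear estimate would produce, via a covering argument on the level set $\{|\widehat{\mu^{\odot k}}(\eta)|>N^{-\kappa_0}\}$ and two applications of the Balog--Szemer\'edi--Gowers theorem, a $1/N$-separated subset $\mathcal{F}_2$ of the support of $\mu^{\odot k}$ that simultaneously has small additive doubling and small multiplicative doubling under $\odot$. Since $(\R^d,+,\odot)$ is a commutative ring but not a field for $d \ge 2$, Bourgain's one-dimensional discretised sum-product theorem does not directly apply; however, this is precisely the scenario resolved by Li's higher-dimensional discretised sum-product theorem in \cite{JialunRd}, whose projective non-concentration hypothesis along every direction $v \in \mathbb{S}^{d-1}$ matches the one assumed here. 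Invoking \cite{JialunRd} yields the required contradiction, which forces the desired $\eps$-gain and closes the iteration. The principal technical difficulty beyond the quotation of \cite{JialunRd} is to propagate the projective non-concentration hypothesis through the operations $\mu \mapsto \mu^{\odot k}$, $\mu \mapsto \mu * \mu_-$, and the Balog--Szemer\'edi--Gowers refinements, so that $\mathcal{F}_2$ still satisfies a $v$-dependent non-concentration condition at scale $1/N$ with a uniformly controlled exponent; this propagation is what forces $k$ to grow with $\kappa^{-1}$ and pins down the admissible range of $\eps_1$ and $\eps_2$.
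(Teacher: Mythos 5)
The paper does not prove Theorem \ref{thm:discretisedRd}; it quotes it from \cite{JialunRd} and records only that the proof ``relied on a sum-product theorem on real Lie groups by He and de Saxc\'e \cite{HdS18}.'' Your proposal is therefore being compared against the strategy the paper attributes to \cite{JialunRd}, and at that level you have reconstructed it correctly: lift Bourgain's iterative scheme to the commutative ring $(\R^d,+,\odot)$, reduce the $\eps$-gain for the exponents $\sigma_{k,r}$ to a bilinear estimate whose failure produces a $1/N$-separated set with small additive and multiplicative doubling, rule that out by a discretised sum-product theorem valid in $\R^d$, and propagate the projective non-concentration through the convolutions --- which is indeed where the work lies and why the hypothesis is stated for every direction $v\in\mathbb{S}^{d-1}$ (it is exactly what excludes concentration near the proper ideals and subalgebras of $\R^d$, the new obstruction absent in dimension one).

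Two concrete caveats. First, your citation for the sum-product input is essentially circular: the theorem you are proving \emph{is} the main Fourier-decay result of \cite{JialunRd}, so you cannot invoke ``Li's higher-dimensional discretised sum-product theorem in \cite{JialunRd}'' as a black box without specifying that the genuinely external ingredient is the He--de Saxc\'e sum-product theorem for the ring $\R^d$ (more generally for real Lie algebras) \cite{HdS18}, whose hypothesis is non-concentration near proper subalgebras; part of the proof is precisely the verification that your set $\mathcal{F}_2$ inherits that hypothesis from the projective non-concentration of $\mu$. Second, your exponent bookkeeping does not close. With $\|\tau_N*\mu\|_\infty\lesssim N^{d-\kappa}$ and the Jacobian $\prod_i|\xi_i|\approx N^{d}$ from the change of variables $u=\eta\odot\xi$, the threshold you impose, $\int_{|\eta|\lesssim N}|\widehat{\mu^{\odot k}}|^r\,d\eta\le N^{d-\kappa/2}$, yields a final bound of order $N^{(d-\kappa)-d+(d-\kappa/2)}=N^{d-3\kappa/2}$, which is not small for $\kappa$ below $2d/3$; the target must be $N^{\kappa/2}$ as in the one-dimensional case (a gain of $N^{d-\kappa/2}$ over the trivial volume bound $N^d$, which is what makes the higher-dimensional iteration harder, not a typographical variant of the $1$-dimensional threshold). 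Relatedly, the change of variables requires every coordinate of $\xi$ to be of size comparable to $N$; the normalisation $|\xi|\approx N$ alone does not give this, and the degenerate directions must be handled separately (this is another place the projective hypothesis enters).
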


The proof of Theorem \ref{thm:discretisedRd} relied on a sum-product theorem on real Lie groups by He and de Saxc\'e \cite{HdS18} from 2018. Thus the $\R$-split condition of the Zariski closure of $\Gamma$ is essential in checking the projective non-concentration condition. In dimensions $2$ and $3$, this $\R$-split condition of the Zariski closure is unnecessary, and we only need irreducibility of $\Gamma$ and non-compactness of the image of $\Gamma$ in $\mathrm{PGL}(d,\R)$ to guarantee polynomial decay. For example, in dimension $2$, these conditions always imply Zariski density of $\Gamma$. When we go to dimensions $d \geq 4$, one can construct examples that demonstrate that non-compactness of the image is not enough \cite[Remark 1.2]{JialunSahlsten2}, so it remains an interesting open problem to classify self-affine Rajchman measures:

\begin{problem}\label{prob:selfaffinecharpoly}
Provide an algebraic characterisation for self-affine iterated function systems that lead to every self-affine measure to be Rajchman with polynomial Fourier decay.
\end{problem}

For example, in dimension $2$, would irreducibility of $\Gamma$ and non-compactness of the image of $\Gamma$ in $\mathrm{PGL}(2,\R)$ characterise polynomial Fourier decay of all self-affine measures associated to $\Phi$? We also remark that Li's work \cite{Li2} on proving polynomial Fourier decay of stationary measures for split Lie groups also has links to other works such as in the study of exponential mixing, see \cite{LiPan} and also \cite{LPX}, where it was used to prove non-concentration.

Now, given Rapaport's algebraic classification of Rajchman self-similar measures \cite{Rapaport}, perhaps an easier direction towards Problem \ref{prob:selfaffinecharpoly} would be to only  characterise the Rajchman property. This is a problem suggested by Ariel Rapaport:

\begin{problem}\label{prob:selfaffinechar}
Provide an algebraic characterisation for self-affine iterated function systems that lead to every non-trivial self-affine measure to be Rajchman.
\end{problem}

As we discussed earlier, if the group $\Gamma$ is not proximal, then Problem \ref{prob:selfaffinechar} reduces essentially to the self-similar case and thus the work of Rapaport \cite{Rapaport} can be used. Therefore, to achieve a characterisation of the Rajchman property, we would need to consider the case of proximal but not totally irreducible $\Gamma$. This is essentially the \textit{solvable} case and an attractive case would be to initially study dimension $d = 2$ and the diagonal case when $\Gamma$ is isomorphic to $\R^2$. In general in dimension $2$, one could predict that if the closure of $\Gamma$ is not contained in a one-dimensional subgroup, one would still have a Rajchman property.

Motivated also by the polynomial Fourier decay for \textit{almost every} Bernoulli convolutions and self-similar measures in $\R$, one could also ask if self-affine measures have something like this. In the diagonal case, which is thus not covered by \cite{JialunSahlsten2}, Solomyak \cite{Solomyak3} in 2022 did recently study this and obtained:

\begin{theorem}\label{thm:soloyakRd}
For Lebesgue almost every $(\lambda_1,\dots,\lambda_d) \in (0,1)^{d}$ if $\mu$ is a non-trivial self-affine measure for the self-affine IFS $\Phi_{A} = \{f_j = A + b_j : j \in \cA\}$, where $A$ is the diagonal matrix with diagonal $(\lambda_1,\dots,\lambda_d)$ such that any pairs of the maps $f_j$ do not have a common fixed point, then $\mu$ has polynomial Fourier decay.
\end{theorem}

Classifying self-affine Rajchman measures and what conditions lead to polynomial Fourier decay is useful information in the study of general $C^{1+\alpha}$ IFSs in $\R^d$. For example, given the advances towards Problem \ref{prob:nonlinear}, any obstructions in the self-affine case to Fourier decay, could also happen in the non-linear case if a conjugacy to such self-affine system exists. In the next section we will review what is known in the general case in $\R^d$.

\subsection{Non-linear $C^{1+\alpha}$ systems in $\R^d$ and Fractal Uncertainty Principles}

The Fourier transforms of stationary measures for higher dimensional $C^{1+\alpha}$ IFSs $\Phi = \{f_j : j \in \cA\}$ are not yet that widely studied as the previous cases, but there are some recent developments we will discuss. When studying $C^{1+\alpha}$ IFSs, one strategy could be to try to separate the study to conformal situation, that is, when $f_j'(x) = r_j(x) \theta_j(x) + b_j$ for some orthogonal matrix $\theta_j(x)$ in $\R^d$, in which case we could try to exploit either the self-similar case in $\R^d$, and then to the study of more general situations such as quasiconformal maps. 

We also note that spectral gaps for transfer operators that play a major role in proving non-concentration estimates needed for polynomial Fourier decay, do have their analogues in higher dimensions already, e.g. in the study of exponential mixing of Anosov flows \cite{Stoyanov} by Stoyanov from 2011 and the Teichm\"uller flow \cite{AGY} by Avila, Gou\"ezel and Yoccoz from 2005. Hence it is plausible that Fourier decay results, such as the ones done in dimension $1$, have their analogues in $\R^d$. A potential problem to tackle here could be:

\begin{problem}\label{prob:nonlinearRd}
Suppose $\Phi = \{f_j : j \in \cA\}$ is a $C^{1+\alpha}$ IFS in $\R^d$ such that one of the maps $f_j$ is not a similitude and the attractor $K_\Phi$ is not contained in a proper subspace of $\R^d$. Does the Fourier transform of every self-conformal measure giving positive weight to every branch associated to $\Phi$ decay polynomially?
\end{problem}

Some advances towards the setting of Problem \ref{prob:nonlinearRd} have been done recently. In 2019 in \cite{JialunNaudPan} Li, Naud and Pan studied the Patterson-Sullivan measures of limit sets of Kleinian groups, where a polynomial Fourier decay theorem was proved. 

\begin{theorem}\label{thm:LiNaudPan}
Let $\Gamma$ be a Zariski dense Kleinian Schottky subgroup of $\mathrm{PSL}(2,\C)$. Then the Fourier transform $\widehat{\mu}_\Gamma$ of the Patterson-Sullivan measure $\mu_\Gamma$ associated to $\Gamma$ has polynomial Fourier decay.
\end{theorem}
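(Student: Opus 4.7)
The plan is to adapt the approach of Bourgain--Dyatlov (Theorem \ref{thm:bourgaindyatlov}) to the Kleinian setting, replacing the one-dimensional discretised sum--product theorem by its $\C$-valued counterpart and using the full Zariski density of $\Gamma$ in $\mathrm{PSL}(2,\C)$ to guarantee a two-dimensional non-concentration of complex derivatives.

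First I would set up a Schottky coding of $\Lambda_\Gamma \subset \widehat{\C}$. The Schottky hypothesis supplies finitely many disjoint round disks in $\widehat{\C}$ paired by Möbius contractions, so admissible finite words $\a$ produce compositions $\gamma_\a \in \mathrm{PSL}(2,\C)$ satisfying uniform bounded distortion on their domains. The Patterson--Sullivan measure $\mu_\Gamma$ is $\delta$-Ahlfors--David regular with $\delta = \Hd \Lambda_\Gamma$ and is realised as a Gibbs state for the associated subshift of finite type. Given $\xi \in \R^2 \cong \C$ with $|\xi| = R \gg 1$, fix a small $\eps_1 > 0$, set $\tau = R^{-\eps_1}$, and let $\cW(\tau)$ be the admissible words $\a$ with $|\gamma'_\a(x_\a)| \approx \tau$, where $x_\a$ is the centre of the associated Schottky disk image. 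Using blocks of $k$ and $k+1$ words from $\cW(\tau)$ as in \eqref{eq:FourierBoundFuchsian}, together with Ahlfors--David regularity of $\mu_\Gamma$, Cauchy--Schwarz, the mean value theorem and bounded distortion, one reduces the task to bounding, for all admissible $\A = \a_1 \dots \a_{k+1}$ and all $\eta \in \C$ with $|\eta| \in [\tau^{-1/4}, C\tau^{-1/2}]$, a multiplicative exponential sum of the form
\[
\Big| \sum_{\b_1 \dots \b_k \in \cB(\A)} \exp\!\Big( -2\pi i \, \big\langle \eta,\, \textstyle\prod_{j=1}^{k} \tau^{-2} \gamma'_{\a_{j-1}' \b_j'}(x_{\a_j}) \big\rangle \Big) \Big|,
\]
where the product is complex multiplication and $\langle \cdot, \cdot \rangle$ is the standard inner product on $\R^2$.

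Next I would interpret this sum as the $\R^2$-Fourier transform of a $k$-fold multiplicative self-convolution (with respect to complex multiplication) of the discrete empirical measures $\nu_j = |\cB_j|^{-1} \sum_{\b \in \cB_j} \delta_{\tau^{-2} \gamma'_{\a_{j-1}'\b'}(x_{\a_j})}$ on $\C^* \cong \R^2 \setminus \{0\}$, and invoke the complex-multiplication version of the discretised sum--product theorem (analogous to Theorem \ref{thm:discretised}, available via Bourgain and He--de~Saxc\'e type results in $\mathrm{GL}(2,\R)$). This yields a bound $|\xi|^{-\eps_2}$ for some $\eps_2 > 0$ as soon as each $\nu_j$ satisfies a Frostman-type non-concentration at all scales $\rho \in [\tau^{c'}, \tau^c]$ and on all complex balls \emph{and} its projection to $\R\P^1$ does not concentrate on any single direction. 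The $k$ used in the block decomposition is determined by the sum--product input as a function of the Frostman exponent $\kappa$.

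The main obstacle is verifying this two-dimensional non-concentration of the renormalised derivatives $\tau^{-2}\gamma'_{\a_{j-1}'\b'}(x_{\a_j})$. The radial (modulus) part is controlled by the same reverse-word trick as in the Fuchsian case: $\gamma^{-1}_{\a_{j-1}'\b'}(\infty)$ lies in a Schottky disk $\gamma_{\overline{\b'}}(D)$ of diameter $\approx \tau$, so the moduli distribute with a Frostman bound inherited from Ahlfors--David regularity of $\mu_\Gamma$. The genuinely new and hard ingredient is the \emph{angular} non-concentration: one must prove that the complex arguments of the $\gamma'_{\a_{j-1}'\b'}(x_{\a_j})$ do not cluster polynomially on any single real line through the origin at scale $\tau$. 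This is precisely where Zariski density of $\Gamma$ in $\mathrm{PSL}(2,\C)$ (and not merely in $\mathrm{PSL}(2,\R)$) is essential: concentration of the arguments would force a proper algebraic subvariety of $\mathrm{PSL}(2,\C)$ to absorb a large portion of the semigroup generated by $\Gamma$, contradicting Zariski density. Turning this heuristic into a quantitative non-concentration lemma, uniform in $\A$ at scale $\tau$, is the crux and will require a quantitative form of Zariski density in the spirit of Benoist--Quint together with equidistribution of the derivative cocycle on the projective bundle $\Lambda_\Gamma \times \R\P^1$. Once this estimate is secured, combining it with the sum--product bound and the reduction above delivers $|\widehat{\mu}_\Gamma(\xi)| \lesssim |\xi|^{-\alpha}$ for some $\alpha = \alpha(\Hd \Lambda_\Gamma) > 0$.
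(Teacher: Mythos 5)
Your overall architecture is the right one, and it matches how Theorem \ref{thm:LiNaudPan} is actually proved in \cite{JialunNaudPan}: Schottky coding of $\Lambda_\Gamma \subset \widehat{\C}$, the Bourgain--Dyatlov block decomposition as in \eqref{eq:FourierBoundFuchsian}, reduction to multiplicative exponential sums of renormalised complex derivatives, and an appeal to the higher-dimensional discretised sum--product theorem (the version for $\C \cong \R^2$ with complex multiplication, in the spirit of Theorem \ref{thm:discretisedRd} and \cite{JialunRd}, rather than the one-dimensional Theorem \ref{thm:discretised}). You have also correctly located where the real content lies: as the survey notes, ``the difficulty was in establishing the non-concentration condition,'' and in the complex setting this splits into a radial part (handled by the reverse-word trick exactly as in the Fuchsian case) and an angular part, which is genuinely new.

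The gap is that this angular non-concentration is precisely the theorem's hard step, and your proposal does not prove it --- it only records that concentration of the arguments ``would force a proper algebraic subvariety of $\mathrm{PSL}(2,\C)$ to absorb a large portion of the semigroup, contradicting Zariski density.'' That heuristic is qualitative: Zariski density rules out \emph{exact} containment in a subvariety, but what the sum--product input demands is a \emph{quantitative, scale-$\tau$, uniform-in-$\A$} Frostman bound of the form $\nu_j(\{z : \arg z \in I\}) \lesssim |I|^{\kappa}$ (together with non-concentration near arbitrary real lines through the origin, not just coordinate directions). Passing from Zariski density to such a polynomial bound is exactly the non-concentration estimate that \cite{JialunNaudPan} must establish, and it does not follow formally from Benoist--Quint-type equidistribution, which gives limit statements without the required rates. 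Until you supply that quantitative lemma --- e.g.\ by relating the derivative arguments back to the positions of $\gamma^{-1}(\infty)$ in $\widehat{\C}$ and exploiting that $\Lambda_\Gamma$ is not contained in a circle, with explicit separation coming from the Schottky structure --- the proof is incomplete: everything downstream of the sum--product theorem is conditional on an input you have asserted rather than verified.
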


Here the difficulty was in establishing the non-concentration condition. By relating the non-concentration to certain twisted transfer operators studied by Oh-Winter \cite{OW} from 2017, Leclerc \cite{Leclerc} in 2022 adapted the strategy of \cite{SahlstenStevens} to the Fourier decay of natural measures arising from Julia sets $J_f$ for hyperbolic rational map $f : \widehat{\C} \to \widehat{\C}$, where $\widehat{\C}$ is the Riemann sphere. Examples include $f_c(z) = z^2 + c$ when $c \in (0,1/4)$, when $f_c$ is hyperbolic and the Julia set $J_{f_c}$ is a quasicircle, and $c \in (1/4,1)$ in which case the Julia set $J_{f_c}$ is a Cantor set.

\begin{theorem}\label{thm:Leclerc}
Let $f : \widehat{\C} \to \widehat{\C}$ be a hyperbolic rational map of degree $d \geq 2$. If the associated $J_f \subset \C$ is not contained in a circle, and $\mu_\phi$ is any stationary measure associated to a potential $\phi : C^1(V,\R)$ on an open neighbourhood $V$ of $J_f$, then $\widehat{\mu}_\phi$ has polynomial Fourier decay at infinity.
\end{theorem}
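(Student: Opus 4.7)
The plan is to extend the strategy behind Theorem \ref{thm:StevensSahlsten} to the complex two-dimensional setting, using the Oh--Winter spectral gap to supply the non-concentration input. Since $f$ is hyperbolic, its Julia set admits a Markov partition, and the inverse branches of a high iterate $f^N$ form a $C^\omega$ conformal iterated function system $\Phi = \{g_a\}_{a \in \cA}$ of contractions on a neighbourhood $V \subset \widehat{\C}$ of $J_f$, coded by a subshift of finite type. Any $C^1$ potential $\phi$ on $V$ induces a H\"older potential on the shift, and the stationary measure $\mu_\phi$ coincides with the projection onto $J_f$ of the associated equilibrium state. So the task is reduced to proving polynomial Fourier decay of a Gibbs measure for a conformal IFS on $\C \cong \R^2$.

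First I would iterate the Gibbs identity $\mu_\phi = \sum_a e^{S_n \phi \circ g_a} g_a \mu_\phi$ to depth $n \sim c \log|\xi|$ and, by the large deviation principle for Birkhoff sums of $\log|g'|$, restrict the sum to regular words $\a \in \cR_n(\eps)$ with $|g_\a'(z)| \asymp e^{-\lambda n}$ uniformly on $V$, at an admissible cost $|\xi|^{-\delta}$. Then, following \cite{BD1, SahlstenStevens}, I would split these words into blocks of lengths $k$ and $k+1$ of sub-words of length $m$ with $e^{-\lambda m} \asymp |\xi|^{-\eps_1}$. Using bounded distortion for conformal contractions together with a mean-value step, a bound for $|\widehat{\mu}_\phi(\xi)|^2$ reduces to estimating a multiplicative convolution over $\C$ of discrete measures $\nu_j$ supported on the normalised complex derivatives $\tau^{-1} g_{\a_j \b}'(x_{\a_j})$, with $\tau = e^{-\lambda m}$ and the dual frequency $|\eta|$ polynomially large in $|\xi|$. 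The higher-dimensional discretised sum--product theorem (Theorem \ref{thm:discretisedRd} for $d = 2$) then delivers polynomial decay of this integral, provided each $\nu_j$ satisfies a projective Frostman bound
\[
\nu_j\big(\{z : \langle v, z \rangle \in B(y, \rho)\}\big) \lesssim \rho^{\kappa}
\]
for some $\kappa > 0$, uniformly over unit vectors $v \in \C$ and $y \in \R$, at scales $\rho \in [\tau, \tau^{\eps_1}]$.

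The main obstacle, and the heart of the argument, is establishing this projective non-concentration, and this is exactly where the hypothesis that $J_f$ is not contained in a circle enters essentially. Were $J_f$ contained in a round circle, $\Phi$ would conjugate to a family of M\"obius inverse branches of a Blaschke-type system and the derivatives $g_{\a_j \b}'(x_{\a_j})$ would collapse to a one-parameter locus, destroying the required two-dimensional spreading. To convert this geometric input into quantitative non-concentration, I would invoke the family of twisted transfer operators
\[
\cL_{\phi, \eta} u(z) := \sum_{a \in \cA} e^{\phi(g_a z) - 2\pi i \eta \cdot g_a(z)} u(g_a(z))
\]
studied by Oh and Winter \cite{OW}, which admit a uniform-in-$\eta$ spectral gap on a suitable anisotropic Banach space with contraction polynomial in $\log|\eta|$ \emph{precisely} under the assumption that $J_f$ is not contained in a circle. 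A Dolgopyat-style cancellation argument, combined with a word-reversal step (in the spirit of the Queff\'elec--Rama\-r\'e trick sketched after Theorem \ref{thm:JordanSahlsten}), then converts this spectral gap into the projective Frostman condition for the $\nu_j$. Assembling the three ingredients and optimising $n, m, k$ as functions of $|\xi|$ yields $|\widehat{\mu}_\phi(\xi)| \lesssim |\xi|^{-\alpha}$ for some $\alpha = \alpha(f,\phi) > 0$; the hypothesis on $J_f$ is used only at the non-concentration step, the remaining ingredients depending only on hyperbolicity, bounded distortion and the H\"older regularity of $\phi$.
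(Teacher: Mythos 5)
Your proposal follows essentially the route the paper attributes to Leclerc: reduce to a conformal IFS via a Markov partition of the hyperbolic Julia set, run the large-deviation and block decomposition of \cite{BD1,SahlstenStevens}, apply the higher-dimensional sum--product theorem to multiplicative convolutions of the normalised complex derivatives, and derive the required non-concentration from the Oh--Winter spectral gap, with the hypothesis that $J_f$ is not contained in a circle entering only at that last step. One correction: the relevant Oh--Winter operators are twisted by unitary characters of the \emph{derivative cocycle}, i.e. by $|g_a'(z)|^{s+ib}\bigl(g_a'(z)/|g_a'(z)|\bigr)^{\ell}$, not by $e^{-2\pi i \eta\cdot g_a(z)}$ as you wrote; since the quantities whose spreading you need live in $\C^*\cong\R_{>0}\times S^1$, it is the holonomy component $(\arg g_a')^{\ell}$ whose non-degeneracy is equivalent to $J_f$ not lying on a circle, and this supplies non-concentration of the arguments of the derivatives, while the moduli are handled by the UNI/Dolgopyat-type estimates already present in \cite{SahlstenStevens}.
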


This applied in particular to the conformal measure and the measure of maximal entropy. Here Leclerc was working in the hyperbolic regime. Now, comparing to Problem \ref{prob:parabolic} in the one-dimensional case, it would be interesting to study what happens to the Fourier decay rate when the Julia set $J_f$ becomes close to something that is known to be Salem or when the map approaches something that might fail to have this.  For example, if $f_c(z) = z^2+c$ and $J_c$ is the Julia set of $f_c$, then the Julia set $J_c$ approaches to a circle as $c \to 0$, which is a Salem set (length measure has decay rate $|\xi|^{-1/2}$, see the book by Mattila \cite{Mattila2}) as we discussed in the introduction. For this J. Fraser posed the following problem:

\begin{problem}\label{prob:julialimit}
Does there exist probability measures $\mu_c$ on $J_c$ with 
$$|\widehat{\mu}_c(\xi)| \lesssim |\xi|^{-\alpha_c},$$
as $|\xi| \to \infty$, such that $\alpha_c \to \frac{1}{2}$ as $c \to 0$?
\end{problem}

On the other hand, at $c=1/4$ there is a phenomenon called \textit{parabolic implosion} and the dynamics $f_{1/4}$ is parabolic. What would happen for Fourier decay rates when $c \to 1/4$? Note the Hausdorff dimension $c \mapsto \Hd J_{f_c}$ is discontinuous at $c = \frac{1}{4}$. An interesting open problem here suggested by F. Naud is to study the following problem:

\begin{problem}\label{prob:parabolicJulia}
What happens to the polynomial Fourier decay rates of stationary measures for $f_c$ when $c \to 1/4$?
\end{problem}

To adapt ideas of Leclerc \cite{Leclerc}, one would need to develop the work of Oh and Winter \cite{OW} in a parabolic context, which was suggested to the author by G. Leclerc. 

Now to attack more general IFSs, recently Khalil \cite{Khalil} in 2023 showed that if a measure $\mu$ does not concentrate near subspaces, then there is a polynomial decay of the Fourier transform except along a very sparse set of frequencies, generalising a similar statement for self-similar measures on $\R$ by Tsujii:

\begin{theorem}\label{thm:KhalilRd}
Let $\mu$ be a compactly supported Borel probability measure on $\R^d$, which is $(C,\alpha)$ uniformly affinely non-concentrated for some $C,\alpha > 1$, that is, for any $\rho > 0$, $x \in \R^d$, $0 < r \leq 1$ and any affine hyperplane $V$ in $\R^d$, we have
$$\mu(\{y \in B(x,r) : \mathrm{dist}(y,V) \leq \rho r \} ) \leq C\rho^\alpha \mu(B(x,r)).$$
Then for any $\eps > 0$ there exists $\delta > 0$ such that for all $T > 0$:
$$\mathrm{Leb}(\{\xi \in B(0,T) : |\widehat{\mu}(\xi)| > T^{-\delta}\}) = O_\eps(T^\eps).$$
\end{theorem}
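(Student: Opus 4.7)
My approach is a moment-method argument: I would establish $L^{2k}$ upper bounds for $\widehat{\mu}$ over $B(0,T)$ and conclude by Chebyshev's inequality. First, I would reformulate the affine non-concentration as a projective Frostman condition. Applying the hypothesis with $V$ a hyperplane perpendicular to a unit vector $v$ and scale parameter $\rho = r/R_0$ (with $R_0$ bounding the diameter of $\supp\mu$), then covering the support by finitely many balls of radius $R_0$ with bounded overlap, the hypothesis yields uniform $\alpha$-Frostman regularity of the projections $\pi_v(x) = v\cdot x$:
\begin{equation*}
(\pi_v)_\ast\mu([t-r, t+r]) \lesssim r^{\alpha}, \qquad v \in S^{d-1},\ t\in\R,\ 0 < r \leq 1,
\end{equation*}
with constants independent of $v$.

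Next, I would pass from $L^{2k}$ averages of $\widehat{\mu}$ to small-ball masses of additive self-convolutions by means of the Plancherel identity $|\widehat{\mu}|^{2k} = \widehat{(\mu\ast\check{\mu})^{\ast k}}$, where $\check\mu(E)=\mu(-E)$. Pairing this against a non-negative Schwartz majorant $\phi \geq \mathbf{1}_{B(0,1)}$ whose Fourier transform is compactly supported (e.g.\ $\phi = |\widehat\psi|^2$ for a suitable bump $\psi$), rescaled to $\phi(\xi/T)$, produces
\begin{equation*}
\int_{B(0,T)} |\widehat{\mu}(\xi)|^{2k}\,d\xi \lesssim T^d\cdot (\mu\ast\check{\mu})^{\ast k}(B(0, C_0/T)).
\end{equation*}
The core claim I would then need is
\begin{equation*}
(\mu\ast\check{\mu})^{\ast k}(B(0, r)) \lesssim_{k} r^{\beta_k}, \qquad \beta_k \nearrow d \text{ as } k\to\infty,
\end{equation*}
proved by induction on $k$: slicing the iterated convolution by hyperplanes via Fubini, applying the 1D projective Frostman bound from the first step in every direction, and exploiting the fact that affine non-concentration is preserved (up to constants) under additive convolution, so that each extra convolution spreads mass in every direction simultaneously.

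Chebyshev's inequality then gives
\begin{equation*}
|E_T(\delta)| := \bigl|\{\xi\in B(0,T): |\widehat{\mu}(\xi)| > T^{-\delta}\}\bigr| \leq T^{2k\delta}\int_{B(0,T)}|\widehat{\mu}|^{2k}\,d\xi \lesssim T^{2k\delta + d - \beta_k},
\end{equation*}
so given $\eps > 0$ one picks $k$ large enough that $d-\beta_k < \eps/2$, and then sets $\delta = \eps/(4k)$, producing the desired $O_\eps(T^\eps)$ bound.

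The hard part will be establishing the dimensional gain under convolution. Additive convolutions of a Frostman measure do not automatically improve the ambient Frostman dimension: one can easily construct 1D Frostman measures of dimension $\alpha$ whose self-convolutions remain rough at the scale $r^{\alpha}$. The gain here must come from the \emph{full} affine non-concentration, which forbids coherent concentration of the convolved mass on any lower-dimensional affine subvariety. Rigorously propagating this through the induction, and controlling the $k$-dependence of the implicit constants so that $\beta_k$ genuinely approaches $d$, will likely demand a delicate multiscale covering argument adapted to the simultaneous hyperplane non-concentration of $\mu$ in all directions and at all scales, and constitutes the technical heart of the result.
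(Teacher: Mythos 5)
Your overall architecture is the right one, and it matches the route the survey attributes to Khalil: reduce the sparse-frequency statement to a moment bound $\int_{B(0,T)}|\widehat{\mu}|^{2k}$, convert that via Plancherel into a small-ball estimate for the additive self-convolutions $(\mu\ast\check{\mu})^{\ast k}$, and win by Chebyshev once the convolution powers flatten, i.e. $\beta_k \nearrow d$. The reduction steps (projective Frostman regularity from the hypothesis, the Schwartz majorant, the Chebyshev bookkeeping with $\delta = \eps/(4k)$) are all fine.

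The genuine gap is the core claim itself, and the method you propose for it would not work. Slicing by hyperplanes via Fubini and applying the one-dimensional Frostman bound in every direction can only ever reproduce the exponent $\alpha$ you started with; it cannot produce a strictly increasing sequence $\beta_k$ with $\beta_k \to d$. Likewise, the observation that affine non-concentration is preserved under convolution gives stability of the exponent, not a gain. The gain per convolution is not a covering or measure-theoretic fact at all: it is an additive-combinatorial one. The actual proof (Khalil, building on Shmerkin's $L^q$-flattening theorem and Hochman's inverse theorem in $\R$, and in the spirit of the discretised sum-product/Balog--Szemer\'edi--Gowers machinery sketched around Theorem \ref{thm:discretised} of this survey) runs by contraposition: if $\|\mu\ast\nu\|_2$ (at scale $2^{-n}$) fails to improve on $\|\nu\|_2$, then an inverse theorem forces $\mu$ to be close, at a positive proportion of dyadic scales, to a measure concentrated near an affine subspace (or uniform on a coset-like structure), which the uniform affine non-concentration hypothesis rules out at every scale and location. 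Without an inverse theorem of this type, your induction on $k$ has no mechanism to increase $\beta_k$, so as written the proposal is a correct reduction of the theorem to its hardest ingredient rather than a proof. You do flag this honestly, but the fix is not a "delicate multiscale covering argument" -- it is importing (or reproving in higher dimensions) the $L^2$-flattening/inverse theorem, which is essentially the entire content of the cited result.
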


Khalil established Theorem \ref{thm:KhalilRd} by proving a new higher dimensional analogue of Shmerkin's $L^q$ flattening theorem under additive convolutions in $\R$ \cite{Shmerkin} and adapting the $L^2$ case. Theorem \ref{thm:KhalilRd} applies to self-conformal measures associated to self-conformal iterated function systems in $\R^d$ whose attractors are not contained in a hyperplane \cite{RS} providing evidence towards a possible solution to Problem \ref{prob:nonlinearRd}.

We could also compare Khalil's work to the recent paper by Dasu and Demeter \cite{DD} from 2022. There Dasu and Demeter studied Frostman measures on $\R^2$ supported on curved graphs, and were able to give $L^6$ information on the Fourier transform of such measures:

\begin{theorem}\label{thm:Demeter}
Suppose $\Gamma$ is a graph of a $C^3$ function with non-zero curvature everywhere. If $\mu$ is a Borel measure on $\Gamma$ satisfying $\mu(B(x,r)) \lesssim r^{\delta}$, $x \in \R^2$ and $r > 0$ for some constant $\delta > 0$, then there exists $\beta = \beta(\delta) > 0$ such that for any ball $B_R$ of radius $R > 0$ we have
$$\|\widehat{\mu}\|_{L^6(B_R)} \lesssim R^{\frac{2-2\delta - \beta}{6}}.$$
\end{theorem}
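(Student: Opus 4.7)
The plan is to apply the Bourgain--Demeter $\ell^2$-decoupling theorem for the parabola at the critical exponent $p=6$. Using the $C^3$-regularity and non-vanishing curvature of $\Gamma$, a partition of unity together with a smooth change of coordinates first reduces the problem to the model case in which $\Gamma$ is a bounded piece of the standard parabola $\{(t,t^2):t\in[0,1]\}$, where the Bourgain--Demeter theory applies directly.

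Next I would partition $\Gamma$ at scale $R$ into $\sim R^{1/2}$ arcs $\theta_k$ of arclength $R^{-1/2}$; by curvature, each is contained in a rectangle of dimensions $\sim R^{-1/2}\times R^{-1}$. Writing $\mu_k=\mu|_{\theta_k}$ so that $\mu=\sum_k\mu_k$, the $\ell^2$-decoupling theorem yields, for any $\epsilon>0$,
\[
\|\widehat{\mu}\|_{L^6(B_R)}\lesssim_\epsilon R^\epsilon\Big(\sum_k\|\widehat{\mu_k}\|_{L^6(w_{B_R})}^2\Big)^{1/2}.
\]
Each cap piece is then controlled by interpolating an $L^\infty$ and an $L^2$ bound. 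The Frostman hypothesis gives $\|\widehat{\mu_k}\|_\infty\le\mu(\theta_k)\lesssim R^{-\delta/2}$; convolving with a Schwartz mollifier whose Fourier transform is one on $B_R$ yields the standard estimate
\[
\|\widehat{\mu_k}\|_{L^2(w_{B_R})}^2\lesssim R^2\int\mu_k(B(x,R^{-1}))\,d\mu_k(x)\lesssim R^{2-\delta}\mu(\theta_k)\lesssim R^{2-3\delta/2},
\]
which can be sharpened to $\|\widehat{\mu_k}\|_{L^2(w_{B_R})}^2\lesssim\mu(\theta_k)^2|T_k\cap B_R|\lesssim R^{3/2-\delta}$ by exploiting the concentration of $\widehat{\mu_k}$ in the tube $T_k$ of dimensions $R^{1/2}\times R$ dual to the cap's physical rectangle.

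Interpolating then gives $\|\widehat{\mu_k}\|_{L^6}^6\lesssim R^{-2\delta}\cdot R^{3/2-\delta}=R^{3/2-3\delta}$, and summing via the power-mean inequality,
\[
\|\widehat{\mu}\|_{L^6(B_R)}^6\lesssim R^{6\epsilon}\Big(\sum_k\|\widehat{\mu_k}\|_{L^6}^2\Big)^3\lesssim R^{6\epsilon}\cdot R\cdot R^{1/2}\cdot R^{3/2-3\delta}=R^{6\epsilon+3-3\delta},
\]
which establishes the bound with $\beta=1-\delta+6\epsilon$.

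The hard part is that this value of $\beta$ beats the trivial bound $\|\widehat{\mu}\|_{L^6(B_R)}^6\lesssim R^{2-\delta}$ (that is, $\beta<\delta$) only when $\delta>1/2$, leaving the low-dimensional regime uncovered. To plug this gap I would either replace plain $\ell^2$-decoupling at scale $R^{-1/2}$ by the small-cap decoupling of Demeter--Guth--Wang at an intermediate scale $R^{-\alpha}$, $\alpha\in[1/2,1]$, optimising $\alpha=\alpha(\delta)$; or import a Mockenhaupt--Mitsis--Bak--Seeger $L^2\to L^q$ extension estimate for Frostman measures on curved hypersurfaces to obtain a global $L^q(\R^2)$ bound on $\widehat{\mu}$ for some $q=q(\delta)>6$, and transfer to $L^6(B_R)$ via H\"older's inequality.
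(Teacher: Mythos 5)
Since this is a survey, the paper does not prove Theorem \ref{thm:Demeter}; it only cites Dasu--Demeter \cite{DD} and records that the proof rests on decoupling. Your starting point --- $\ell^2 L^6$ decoupling for nondegenerate curves applied to $\widehat{\mu}\cdot w_{B_R}$, followed by $L^\infty$--$L^2$ interpolation on each cap --- is therefore in the right spirit. (One cosmetic caveat: decoupling is only affine-invariant, so a general smooth change of variables does not literally reduce a $C^3$ curved graph to the parabola; you should instead invoke the standard extension of Bourgain--Demeter to $C^3$ curves with non-vanishing curvature.)

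The genuine gap is your ``sharpened'' cap estimate $\|\widehat{\mu_k}\|_{L^2(w_{B_R})}^2\lesssim\mu(\theta_k)^2|T_k\cap B_R|$. Since $\mu_k$ is supported in a box of dimensions $R^{-1/2}\times R^{-1}$, the function $|\widehat{\mu_k}|$ is \emph{locally constant on translates} of the dual box $T_k$, but it is not concentrated on a single tube: if $\mu_k$ places its mass in few $R^{-1}$-cells of the cap (which the Frostman condition permits when $\delta$ is small), then $|\widehat{\mu_k}|$ is spread over many of the $\sim R^{1/2}$ dual tubes tiling $B_R$, and $\int_{B_R}|\widehat{\mu_k}|^2$ can be as large as the energy bound $R^{2-\delta}\mu(\theta_k)\approx R^{2-3\delta/2}$ allows, which exceeds $R^{3/2-\delta}$ precisely when $\delta<1$. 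The tube bound is valid only for measures that are essentially uniform at scale $R^{-1}$, which is not a consequence of the hypotheses. Replacing it by the correct bound $R^{2-3\delta/2}$, your computation gives $\|\widehat{\mu}\|_{L^6(B_R)}^6\lesssim_\epsilon R^{7(1-\delta)/2+\epsilon}$, which beats the trivial exponent $2-\delta$ only for $\delta>3/5$; even your own (unjustified) numerology requires $\delta>1/2$, as you note. The remaining range is not a technicality: a single application of decoupling at the scale $R^{-1/2}$ combined with interpolation cannot reach all $\delta>0$, and the suggested repairs (small-cap decoupling at an optimised intermediate scale, or importing a Mockenhaupt--Mitsis--Bak--Seeger restriction estimate) are left entirely unexecuted --- indeed the latter estimates themselves carry dimensional thresholds. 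What is actually needed, and what \cite{DD} supplies, is a multiscale/bootstrapping argument that exploits the dichotomy between spreading (where $L^2$ orthogonality wins) and concentration (where the Frostman condition at a coarser scale gives the gain). As it stands, the proposal proves a weaker statement in a restricted range of $\delta$ and does not establish the theorem.
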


The methods behind Theorem \ref{thm:Demeter} rely on the notion of \textit{decoupling} that exploits the non-linear nature of the graph function $\Gamma$. It would be interesting to try to link the methods of decoupling to the study of more general non-linear $C^{1+\alpha}$ IFSs.

Parallel to the study of Fourier decay of stationary measures for IFSs, there has also been activity in the community of quantum chaos to try to study \textit{Fractal Uncertainty Principles} in $\R^d$ \cite{Dyatlov,DJ,BD2}. Analogously as with defined in $\R$ in the earlier section, two sets $X,Y \subset \R^d$ are said to satisfy the \textit{Fractal Uncertainty Principle} (FUP) with exponent $\beta > 0$ if for all $0 < h < 1$, whenever $f \in L^2(\R^d)$ satisfies $\supp \widehat{f} \subset \{h^{-1} y : \mathrm{dist}(y,Y) \leq h\}$, then 
$$\int_{X(h)} |f(x)|^2 \, dx \lesssim h^{2\beta}.$$ 
If $X = \R \times \{0\}$ and $Y = \{0\} \times \R$, then FUP does not hold for these sets in $\R^2$, which is a similar obstruction as for the decay of Fourier transform of measures in $\R^2$. Thus there seems to be very similar obstructions to proving FUP in $\R^d$ as with getting polynomial Fourier decay for stationary measures. Moreover, polynomial Fourier decay can be used to prove Fractal Uncertainty Principle for the supports of the measures in some regimes \cite{BakerSahlsten}. 

Adapting Dolgopyat's method, Backus, Leng and Tao \cite{BLT} in 2023 proved FUP for Ahlfors-David regular subsets that are \textit{non-orthogonal with respect to the dot product}  \cite[Definition 1.2]{BLT} meaning roughly that $X,Y$ are not contained in submanifolds that have orthogonal tangent lines.

\begin{theorem}\label{thm:BLT}
If $X,Y \subset \R^d$ are Ahlfors-David regular sets with dimensions $\delta_X$ and $\delta_Y$ respectively, which are non-orthogonal with respect to the dot product, then $X$ and $Y$ satisfy the FUP with exponent $\beta = \frac{d}{2} - \frac{\delta_X+\delta_Y}{2} + \alpha$ for some $\alpha > 0$.
\end{theorem}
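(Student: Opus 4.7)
The plan is to rephrase FUP as an operator norm estimate, extract the trivial exponent $\beta_0 = \frac{d}{2} - \frac{\delta_X + \delta_Y}{2}$ from a direct $TT^\ast$/Schur argument, and then improve it by some $\alpha > 0$ via a multiscale Dolgopyat-type cancellation argument that exploits the non-orthogonality hypothesis on tangent spaces.

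More concretely, I would first rewrite the FUP as
$$\|\1_{X(h)} \mathcal{F}_h \1_{Y(h)}\|_{L^2(\R^d) \to L^2(\R^d)} \lesssim h^\beta,$$
where $\mathcal{F}_h f(\xi) = h^{-d/2} \int e^{-2\pi i x \cdot \xi / h} f(x) \, dx$ is the semiclassical Fourier transform. Applying $TT^\ast$ reduces the task to estimating the operator on $L^2(X(h))$ with kernel
$$K(x,x') = h^{-d}\1_{X(h)}(x)\1_{X(h)}(x') \int_{Y(h)} e^{-2\pi i (x-x') \cdot \xi/h} \, d\xi.$$
A straightforward Schur test bounding $\int |K(x,x')|\, dx'$ through the Ahlfors--David regular volumes $|X(h)| \sim h^{d-\delta_X}$ and $|Y(h)| \sim h^{d-\delta_Y}$ already yields the trivial exponent $\beta_0$, so the substantive task is to manufacture a $h^{2\alpha}$ gain at the level of $\|TT^\ast\|$.

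The idea is to decompose $X$ and $Y$ into Ahlfors--David regular caps at an intermediate scale $\rho$ with $h \ll \rho \ll 1$, and on each pair of caps $Q_X, Q_Y$ approximate $X$ and $Y$ by their local tangent affine subspaces. For $x,x' \in Q_X$ with $|x-x'| \gg h$, the direction $(x-x')/|x-x'|$ lies essentially in the tangent cone of $X$ at the centre of $Q_X$. The non-orthogonality hypothesis then guarantees that, uniformly in the choice of $Q_X, Q_Y$, we can find tangent directions in $Q_Y$ along which $(x-x') \cdot \xi$ oscillates with frequency $\gtrsim |x-x'|/h$. This non-stationarity, combined with Ahlfors--David regularity of $Y$ inside $Q_Y$ and an integration-by-parts or van der Corput type estimate along the transverse tangent direction, yields extra decay in the inner oscillatory integral beyond the trivial volume bound. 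This is the geometric cancellation lemma that replaces the Bourgain--Dyatlov sum-product input in the one-dimensional theory.

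Finally I would iterate this scale-by-scale gain in the style of Dolgopyat's tree argument: build a dyadic partition of $[h,1]$, at each scale split caps into ``good'' pairs (where transversality is realised with uniform constants) and ``bad'' ones, show that the bad branches form a strict minority by a covering/Frostman argument based on non-orthogonality, and accumulate the per-scale improvement over the $\log(1/h)$ scales to beat $\beta_0$ by a fixed $\alpha > 0$. The main obstacle I anticipate is quantifying the non-orthogonality \emph{uniformly} across pairs of caps at all scales: one needs a scale-invariant, Ahlfors--David compatible formulation of ``tangent subspaces are never pairwise orthogonal'', and must propagate it through the tree without the implicit constants deteriorating. The Definition~1.2 non-orthogonality condition of \cite{BLT} is presumably designed precisely so this uniformity survives, and making sure the Dolgopyat iteration closes with a positive, $h$-independent exponent is where the bulk of the technical work will lie.
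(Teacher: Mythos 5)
First, a point of reference: the survey does not prove this theorem at all --- it is quoted from Backus--Leng--Tao \cite{BLT}, with only the attribution that the proof adapts Dolgopyat's method in the style of Dyatlov--Jin \cite{DJ2}. Your overall skeleton is the right shape and matches the cited approach: reformulating FUP as $\|\1_{X(h)}\cF_h\1_{Y(h)}\|_{L^2\to L^2}\lesssim h^{\beta}$, extracting the trivial exponent $\beta_0=\frac{d}{2}-\frac{\delta_X+\delta_Y}{2}$ from the Hilbert--Schmidt/volume bounds $|X(h)|\sim h^{d-\delta_X}$ and $|Y(h)|\sim h^{d-\delta_Y}$, and then improving it by an induction over $\sim\log(1/h)$ scales on a tree of cubes in which non-orthogonality supplies a per-scale gain.

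The genuine gap is in your cancellation mechanism. You propose to beat the trivial bound by showing that the $TT^*$ kernel $h^{-d}\int_{Y(h)}e^{-2\pi i(x-x')\cdot\xi/h}\,d\xi$ decays beyond $h^{-d}|Y(h)|$ via ``integration by parts or van der Corput along the transverse tangent direction''. This step fails: an Ahlfors--David regular set has no tangent subspaces, $\1_{Y(h)}$ has no smoothness to integrate by parts against, and for a fixed direction $v=x-x'$ the integral $\int_{Y(h)}e^{-2\pi i v\cdot\xi/h}\,d\xi$ need not decay at all beyond the trivial volume bound (take $Y$ locally a product of a middle-thirds Cantor set with a disc and $v$ along the Cantor direction; the relevant one-dimensional Fourier transform stays bounded below along a sparse sequence of frequencies $v_1/h$). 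If pointwise kernel decay plus a Schur test were available, FUP would follow from classical oscillatory-integral technology and no Dolgopyat iteration would be needed. The actual mechanism is an $L^2$ mass-loss induction: a function whose semiclassical frequency support lies in $Y(h)$ cannot have slowly varying phase on all children of a cube of $X$ simultaneously, because non-orthogonality --- quantified uniformly through the regularity constants, which is precisely what Definition 1.2 of \cite{BLT} is engineered to allow --- produces points $x,x'\in X$ and $\xi,\xi'\in Y$ in prescribed children with $|(x-x')\cdot(\xi-\xi')|$ bounded below after rescaling, so a fixed fraction of $\|g\|_{L^2(X(h)\cap Q)}^2$ is lost at every cube and every scale. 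Relatedly, your ``bad branches form a strict minority'' step is not how the induction closes: the argument requires a uniform gain on \emph{every} cube, and the non-orthogonality hypothesis is exactly what rules out bad branches altogether rather than merely making them sparse.
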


The result of Backus, Leng and Tao also applied for more general phase functions, which allowed them to apply the result to bounding the essential spectral gap by a work of Dyatlov and Zahl \cite{DyatlovZahl} from 2016. However, due to the exponent $\beta = \frac{d}{2} - \frac{\delta_X+\delta_Y}{2} + \alpha$, this limited the potential fractals to sets where $\frac{\delta_X+\delta_Y}{2}$ is not too large. This case was considered by Cohen \cite{Cohen} in 2023, who proved FUP in the case of \textit{line $\rho$-porous sets $Y$ from scales $1$ to $h^{-1}$} (here $0 < h < 1$), that is, sets $Y$ for which for all line segment $\tau$ of length $1 \leq r \leq h^{-1}$, we have $B(y,\rho r) \cap Y = \emptyset$ for some $y \in \tau$. If $X$ is assumed to be just \textit{$\rho$-porous on scales $h$ to $1$}, that is, for any $h \leq r \leq 1$ and $x \in X$, there exists $y \in \R^d$ such that $B(y,\rho r) \subset B(x,r) \setminus X$. In this case Cohen was able to prove the FUP when the frequency set is line porous, generalising the work of Bourgain and Dyatlov \cite{BD2} from 2018:

\begin{theorem}\label{thm:CohenRd}
If $X \subset [-1,1]^d$ is $\rho$-porous for scales $[h,1]$ and $Y \subset [-h^{-1},h^{-1}]^d$ is line $\rho$-porous from scales $1$ to $h^{-1}$. Then there exists $\beta = \beta(\rho,d) > 0$ such that $X$ and $Y$ satisfy FUP with the exponent $\beta$.
\end{theorem}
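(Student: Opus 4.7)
The plan is to reduce the $d$-dimensional Fractal Uncertainty Principle to the one-dimensional FUP of Bourgain--Dyatlov for porous sets via a slicing argument that exploits the asymmetric roles of the assumptions on $X$ and $Y$, supplemented by an induction on dimension that closes using the stronger line-porosity hypothesis.

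First, I would reformulate the statement as the semiclassical operator norm bound $\|\1_{X(h)} \cF_h \1_{Y(h)}\|_{L^2(\R^d) \to L^2(\R^d)} \lesssim h^\beta$, where $\cF_h f(\xi) = h^{-d/2} \int e^{-2\pi i x\cdot\xi / h} f(x) \, dx$. The base case $d=1$ is precisely the Bourgain--Dyatlov FUP for porous sets, which gives an exponent $\beta_1 = \beta_1(\rho) > 0$. The goal of the induction is to propagate this into a bound $\beta_d = \beta_d(\rho,d) > 0$ in dimension $d$, using as input the stronger assumption that $Y$ is line $\rho$-porous (not merely $\rho$-porous) at all scales in $[1,h^{-1}]$.

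For the inductive step, I would pick a direction $\omega \in \mathbb{S}^{d-1}$ and decompose $\R^d \cong \R_s \oplus \omega^\perp$, writing points as $x = s\omega + y$ with $y \in \omega^\perp$. Line porosity of $Y$ guarantees that for almost every $y' \in \omega^\perp$, the fiber $Y_{y'} := \{s \in \R : s\omega + y' \in Y\}$ is $\rho$-porous at scales $[1,h^{-1}]$, since by assumption every line in direction $\omega$ intersects $Y$ in a porous set at the required scales. Similarly, $\rho$-porosity of $X$ in $\R^d$ on scales $[h,1]$ yields, via a straightforward geometric argument (covering fibers by witness balls), that almost every fiber $X_y$ is $\rho'$-porous on $[h,1]$ for some $\rho' = \rho'(\rho,d) > 0$. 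The one-dimensional Bourgain--Dyatlov FUP then furnishes a bound $h^{\beta_1}$ for each one-dimensional slice operator $\1_{X_y(h)} \cF_{h,1} \1_{Y_{y'}(h)}$.

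The main obstacle is the combination of the per-fiber one-dimensional bounds into a bound on the full $d$-dimensional operator, since the kernel $e^{-2\pi i (s\omega + y)\cdot (t\omega + y')/h}$ couples the $\omega$-direction with the $\omega^\perp$-directions and prevents a naive Fubini reduction. To resolve this, I would view the full operator as an $\omega^\perp$-dependent family and invoke the $(d-1)$-dimensional FUP for the projections $\pi_{\omega^\perp}(X)$ and $\pi_{\omega^\perp}(Y)$, which should inherit porosity and line-porosity respectively (the latter being non-trivial: projecting a line-porous set onto a hyperplane requires choosing $\omega$ carefully, via a Marstrand-type pigeonhole to find a direction for which the projection remains line-porous on most fibers). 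Combining the 1D fiberwise bound $h^{\beta_1}$ with the inductive $(d-1)$-dimensional bound $h^{\beta_{d-1}}$ through a Schur-type test on the coupling kernel should then yield an exponent $\beta_d > 0$. The hard part will be preserving a uniform quantitative loss of porosity constants under projection so that the induction does not degenerate as $d$ grows, and handling the coupling phase without destroying the gains from the 1D FUP; a partition-of-unity decoupling along $\omega$, followed by an almost-orthogonality estimate between the resulting pieces, is the likely route.
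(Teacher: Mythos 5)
The theorem you are asked to prove is a recent and difficult result of Cohen, which the survey only states and cites; there is no proof in the paper itself, so I will measure your argument against the known proof. Your slicing strategy has a fatal gap. When you restrict a function $f$ with $\supp \widehat{f} \subset Y$ to a line $\ell = \{s\omega + y' : s \in \R\}$, the one-dimensional Fourier transform of $s \mapsto f(s\omega + y')$ is supported not in the slice $Y_{y'}$ but in the \emph{projection} $\{\xi\cdot\omega : \xi \in Y\}$: restriction in physical space corresponds to integration over the orthogonal frequency variables, not to slicing. The projection of a line-porous set onto a line is in general an entire interval, so the one-dimensional Bourgain--Dyatlov FUP says nothing about your slice operators, and the fiberwise $h^{\beta_1}$ bound at the heart of your induction is unavailable. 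The proposed repair --- inducting on dimension via $\pi_{\omega^\perp}(X)$ and $\pi_{\omega^\perp}(Y)$ --- meets the same obstruction: porosity and line porosity are not preserved under orthogonal projection (porous self-similar sets of dimension close to $d$ can project onto sets of full measure in every direction), and Marstrand-type exceptional-direction arguments control dimension or measure of projections, not porosity. A further error: slices of a merely ball-porous set $X$ need not be porous (a truncated hyperplane is porous in $\R^d$ but contains a full line), which is exactly why the hypotheses are asymmetric --- line porosity must be imposed on $Y$ because only that hypothesis guarantees porosity along every line.

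The actual proof proceeds by a genuinely different route, with no slicing and no induction on dimension. Cohen generalises the multiplier construction at the core of the one-dimensional argument of Bourgain and Dyatlov \cite{BD2} to several complex variables: the key object is a plurisubharmonic function on a neighbourhood of $[-1,1]^d$ in $\C^d$ that is bounded above, very negative near the line-porous set, and not too negative at a reference point. Since plurisubharmonicity is tested on restrictions to complex lines, the one-dimensional construction can be run along each line, and this is precisely where line porosity of $Y$ (rather than mere porosity) enters. The gain $h^{\beta}$ is then extracted by iterating over the roughly $\log(1/h)$ scales at which $X$ is porous, using at each scale the sub-mean-value inequality supplied by the plurisubharmonic majorant to show that a function with Fourier support in $Y(h)$ cannot concentrate on $X(h)$. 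If you wish to pursue this, the right starting point is to ask which steps of the Beurling--Malliavin-type construction in \cite{BD2} survive when ``subharmonic on $\C$'' is replaced by ``plurisubharmonic on $\C^d$''.
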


It would be interesting to check which $C^{1+\alpha}$ IFSs produce attractors that satisfy the line porosity condition. This was studied by Chousionis \cite{Chousionis} from 2009, where Chousionis proved in the conformal case the attractor $K_\Phi$ is always \textit{directionally $m$-porous} for all $m$-dimensional subspaces $V$ of $\R^d$ as long as $K_\Phi$ is purely $m$-unrectifiable. Inspecting the methods behind Chousionis and Cohen could be helpful in trying to approach Problem \ref{prob:nonlinearRd} in its full generality.

Next we will discuss some potential future directions beyond the cases we discussed above.

\section{Prospects}\label{sec:prospects}

Leclerc \cite[Theorem 1.6]{LeclercOsc} proved in 2022 that for certain stationary measures $\mu_\eps$ for an IFS $\Phi_\eps$ that is an $\eps$-perturbations of the IFS $\Phi = \{x/2,x/2+1/2\}$ for any $\eps > 0$, one has $|\widehat{\mu}_\eps(\xi)| \lesssim \eps^{-1}|\xi|^{-\alpha}$ as $|\xi|\to \infty$ for some $\alpha > 0$ that is \textit{independent} of $\eps$. As $\eps \to 0$, these measures $\mu_\eps$ will weakly converge to a singular Bernoulli measure $\mu$ for $\Phi$, which as $\times 2$ invariant singular measure is not Rajchman. Thus, Leclerc's result demonstrates that the polynomial Fourier decay rate behaves discontinuously in general in perturbations. However, as the bound was of the form $|\widehat{\mu}_\eps(\xi)| \lesssim \eps^{-1}|\xi|^{-\alpha}$, where the linear coefficient $\eps^{-1}$ will blow-up as $\eps \to 0$, it would be interesting instead just to focus on the linear factors in these bounds and how they behave as $\eps \to 0$. This would lead to a notion of \textit{Fourier complexity} of Rajchman measures with polynomial Fourier decay, where we ignore the polynomial decay term and study lower order term. This could give more finer geometric information on the fractal measure $\mu$.

In 2023 Fraser \cite{Fraser} introduced a notion called the \textit{Fourier spectrum}, which interpolates between the Hausdorff dimension and Fourier dimension of compact sets $K \subset \R^d$. In particular, for sets such as the middle third Cantor set, where there is no Fourier decay for any measure supported on them, the notion allows to get Fourier information on $K$ by reducing some small portion of frequencies depending on an interpolating parameter $p$. The Fourier spectrum is not yet well studied for stationary measures of IFSs, and it could be interesting to see how sensitive it is to the geometric information that plays a major role in the study of Fourier dimension. 

Much of the theory developed here is for IFSs on $\R^d$. What about other spaces and geometries? For example, the Heisenberg group or other sub-Riemannian manifolds, or, as suggested by J. Li in an email communication from 2023, one could try to study Fourier decay in finite fields $\mathbb{F}_p$ or general non-archimedean fields. Studying analogues of the geometric measure theoretic problems in finite fields (e.g. the Kakeya problem) has seen much attention over the recent years. In $\mathbb{F}_p$ there there is not yet much theory done for Fourier transforms of IFSs. Here one could study subsets of the finite field that exhibit some form of self-similarity, and study how this influences the Fourier coefficients of measures on these sets. As with some advances done before the discretised sum-product theorems in finite fields, ideas here could also help with the study of their $\R^d$ analogues, like the hard questions about self-similar measures and a toy model to study the Salem property of attractors to IFSs.

Finally, we can see with many of the examples that the ambient dimension of a stationary measure $\mu$ plays a major role in potentially obstructing the decay of the Fourier transform. Motivated by limiting the directions where Fourier decay happens, in a recent work Leclerc \cite{LeclercBunched} from 2023 introduced a notion of Fourier dimension on manifolds. This notion shares ideas with the semiclassical \textit{wavefront sets} (see the book \cite{Zworski} by Zworski), where one studies directions that do not enjoy rapid Fourier decay. Given the links to the Fractal Uncertainty Principle surveyed in \cite{Dyatlov} and discussed in Section \ref{sec:higherdim}, developing this theory could be interesting in connecting to the challenging study of eigenfunctions of the Laplacian on manifolds (e.g. Quantum Unique Ergodicity conjecture by Rudnick and Sarnak \cite{RS94} from 1994, see the works of Dyatlov and Jin \cite{DJ} from 2018 and \cite{DJN} by Dyatlov, Jin and Nonnenmacher from 2022 for recent developments), which correspond to the analogues of the waves $e^{2\pi i \xi \cdot x}$ used to define $\widehat{\mu}(\xi)$ on $\R^d$.

\section*{Acknowledgements}

We thank Amir Algom, Simon Baker, Amlan Banaji, Semyon Dyatlov, Jonathan Fraser, Gaétan Leclerc, F\'elix Lequen, Jialun Li, Michael Magee, Pertti Mattila, Tuomas Orponen, Ariel Rapaport, Nicolas de Saxc\'e, Boris Solomyak, Connor Stevens, Lauritz Streck, Petri Vesanen, Joe Thomas, P\'eter Varj\'u, Sanju Velani, Caroline Wormell and Zhiyuan Zhang for useful discussions and comments during the preparation of this manuscript. We also thank the anonymous referee for many helpful suggestions to improve the paper. T.S. is supported by the Research Council of Finland's Academy Research Fellowship \emph{``Quantum chaos of large and many body systems''}, grant Nos. 347365, 353738.

\bibliographystyle{plain}

\vspace{0.1in}

\end{document}